\ifdefined\PreambleLoaded\else
  \documentclass{amsart}
\usepackage{marktext}
\usepackage{gimac}

\usepackage{mdframed}
\usepackage{algpseudocode, algorithm, amsmath}

\newmdenv[
  linewidth=0.5pt,
  skipabove=10pt,
  skipbelow=10pt,
  backgroundcolor=gray!3,
  linecolor=gray!60,
  innertopmargin=8pt,
  innerbottommargin=8pt,
  roundcorner=4pt,
  nobreak=false  
]{algbox}

\newcommand{\GI}[2][]{\sidenote[colback=yellow!20]{\textbf{GI\xspace #1:} #2}}
\newcommand{\SJ}[2][]{\sidenote[colback=green!10]{\textbf{SJ\xspace #1:} #2}}

\newcommand{\Bias}{\mathrm{\tilde{B}ias}}
\newcommand{\Err}{\mathrm{Err}}

\newcommand{\mix}{\mathrm{mix}}

\DeclareMathOperator{\dist}{dist}

\DeclareMathOperator{\Hess}{Hess}

\DeclarePairedDelimiter{\ip}{\langle}{\rangle}
\newcommand{\osc}{\mathrm{osc}}
\DeclareMathOperator{\ESS}{ESS}

\newtheorem{assumption}[theorem]{Assumption}
\newtheorem{property}[theorem]{Property}

\newcommand{\PreambleLoaded}{}

\fi

\begin{document}

\title[Convergence of Langevin AIS]
{Convergence of Langevin AIS for multimodal distributions}

\author[Agarwal]{Akshat Agarwal}
\address{%
  Princeton University, Princeton NJ 08544.
}
\email{akshat.agarwal@princeton.edu}

\author[Iyer]{Gautam Iyer}
\address{%
  Department of Mathematical Sciences, Carnegie Mellon University, Pittsburgh, PA 15213.
}
\email{gautam@math.cmu.edu}

\author[Jameson]{Aidan Jameson}
\address{%
  University of Utah, Salt Lake City, UT 84112
}
\email{u0680511@utah.edu}

\author[Son]{Seungjae Son}
\address{%
  Department of Mathematical Sciences, Carnegie Mellon University, Pittsburgh, PA 15213.
}
\email{seungjas@andrew.cmu.edu}

\author[Wimmer]{Wyatt Wimmer}
\address{%
  Brigham Young University, Provo, UT 84602.
}
\email{wyattwimmer@gmail.com}

\begin{abstract}
  We study convergence rates of the \emph{annealed importance sampling algorithm} (Neal '01) combined with \emph{Langevin Monte Carlo} when the target is a multimodal Gibbs measure.
  The main result shows that for a fixed error threshold, the time complexity is \emph{quadratic in the inverse temperature}.
  We identify a simple and useful quantity that controls the sampling error for AIS in a general setting, and then bound this quantity in our setting using spectral estimates.
  We also study an autonormalized version and obtain bounds for the time complexity in terms of the inverse temperature.
\end{abstract}

\thanks{This work has been partially supported by the National Science Foundation under grants
  DMS-2406853,
  DMS-2342349
  and the Center for Nonlinear Analysis.}
\subjclass{%
  Primary:
    60J22,  
  Secondary:
    65C05, 
    65C40. 
  }
\keywords{
  Annealed importance sampling;
  Langevin Monte Carlo;
  Multimodal distributions;
  Sampling.
}
\keywords{enhanced dissipation, mixing}
\maketitle

\section{Introduction}


\subsection{Main Results}
We begin by stating our main results, following which we survey the literature and place our results in the context of the current literature.
Let~$\mathcal X$ be a configuration space and~$U \colon \mathcal X \to \R$ be an energy function.
Given~$\epsilon > 0$, the Gibbs measure with temperature~$\epsilon$ is defined by
\begin{equation}\label{e:piDef}
  \pi_\epsilon \defeq \frac{\tilde \pi_\epsilon}{Z_\epsilon}
  \quad\text{where}\quad
  \tilde \pi_\epsilon = e^{-U / \epsilon}
  \,,
  \quad\text{and}\quad
  Z_\epsilon = \int_{\mathcal X} \tilde \pi_\epsilon \, dx
  \,,
\end{equation}
where the last integral is carried out with respect to some reference measure on~$\mathcal X$.
We typically only consider the case where~$\mathcal X$ is the~$d$-dimensional Euclidean space~$\R^d$, or the~$d$-dimensional torus~$\T^d$.
In these cases the reference measure in~\eqref{e:piDef} is the Lebesgue measure.

Consider the overdamped Langevin equation
\begin{equation}\label{e:langevin}
  d X^\epsilon_t = -\grad U(X^\epsilon_t) + \sqrt{2\epsilon} \, dB_t
  \,,
\end{equation}
where~$B$ is a standard~$d$-dimensional Brownian motion.
It is well known (see for instance~\cite{Pavliotis14}) that the stationary distribution of the Langevin equation is~$\pi_\epsilon$, and the \emph{Langevin Monte Carlo (LMC)} algorithm obtains samples from~$\pi_\epsilon$ by simulating~\eqref{e:langevin} for long time.

The disadvantage of LMC is that when~$U$ is not convex, the rate at which solutions to~\eqref{e:langevin} converge to the stationary distribution is exponentially small in~$1/\epsilon$.
This is the Arrhenius law~\cite{Arrhenius89}, and is described in more detail below.
We show that this slow rate of convergence can be overcome if one uses \emph{annealed importance sampling (AIS)~\cite{Neal01}}, or an autonormalized version of AIS.

\subsubsection{Langevin Annealed Importance Sampling}

For convenience, we first state the Langevin AIS algorithm.
\begin{algorithm}[H]
  \caption{Langevin Annealed Importance Sampling (Langevin AIS)}\label{a:aisLMC}
  \begin{algorithmic}[1]
    \item[\textbf{Tunable parameters:}]
    \Statex
      \begin{enumerate}[(1)]
	\item Target temperature~$\epsilon > 0$.
	\item Annealing schedule $K \in \N$ and~$\epsilon_1 > \cdots > \epsilon_{K+1} = \epsilon$
	\item LMC simulation time~$T$.
      \end{enumerate}
    \State Start with~$X_{0} \in \T^d$ arbitrary, and weight~$\tilde w_0 = 1$.
    \For{$k=1, \dots, K$}
      \State Simulate~\eqref{e:langevin} with temperature~$\epsilon_k$, starting from~$X_{k-1}$, and let~$X_k$ be the state after time~$T$.
      \State $\tilde w_{k} \gets \tilde w_{k-1} \frac{\tilde \pi_{\epsilon_{k+1}}(X_k)}{\tilde \pi_{\epsilon_k}(X_{k})}$.
	
    \EndFor
    \State \Return~$X_{K}$ and the \emph{unnormalized} weight~$\tilde w_K$.
  \end{algorithmic}
\end{algorithm}

Our main result shows that if we obtain~$N$ independent samples using Algorithm~\ref{a:aisLMC}, and empirically normalize the weights, then we obtain good samples from~$\pi_\epsilon$ with time complexity that is quadratic in~$1/\epsilon$.

\begin{theorem}\label{t:langevin}
  Suppose the configuration space~$\mathcal X = \T^d$, and~$U$ is a regular double-well potential with non-degenerate wells of equal depth.
  Given~$\nu > 0$, and~$\epsilon_1 > 0$ there exist constants~$C_T = C_T(\nu, U, \epsilon_1)$, $\bar C_w(\nu, U, \epsilon_1)$ such that the following holds.

  For every~$\delta > 0$, $\epsilon \in (0, \epsilon_1)$, choose
  \begin{align}
    \label{e:K-choice-emp-intro}
    K &= \ceil[\bigg]{ \frac{1}{\epsilon \nu} }
    \,,
    \\
    \label{e:T-choice-emp-intro}\noeqref{e:T-choice-emp-intro}
    T &> \max\set[\Big]{
      C_T \paren[\Big]{\frac{1}{\epsilon} + \log K},
      (4+\abs{\log_2 \delta})t_{\mix, \epsilon_1}^\infty
    }\,,
    \\
    \label{e:N-choice-emp-intro}
    N &= \ceil[\bigg]{\frac{64 \bar C_w}{\delta^2}}
    \,.
  \end{align}
  Choose~$\epsilon_2$, \dots, $\epsilon_{K+1} = \epsilon$ so that~$\set{1/\epsilon_k}_{1 \leq k \leq K+1}$ are linearly spaced.
  Let~$(X^1_K, \tilde w^1_K)$, \dots, $(X^N_K, \tilde w^N_K)$ be the points and unnormalized weights obtained from~$N$ independent runs of Algorithm~\ref{a:aisLMC} with these parameters.
  Define the empirical measure~$\mu_N$ by
  \begin{equation}\label{e:muN-def}
    \mu_N \defeq \sum_{i=1}^N w_i \delta_{X^i_K}
    \quad\text{where}\quad
    w_i = \frac{\tilde w^i_K}{\tilde W_{K, N}}
    \quad\text{and}\quad
    \tilde W_{K, N} \defeq \sum_{i=1}^N \tilde w^i_K
    \,.
  \end{equation}
  Then for every bounded test function~$f$
  \begin{equation}\label{e:emp-error-bd}
    \E \abs[\big]{
      \ip{f, \mu_N} - \ip{f, \pi_\epsilon}
    }^2
      < \norm{f}_\osc^2 \delta^2
      \,.
  \end{equation}
  Moreover, for every~$s > d/2$ there exists an explicit dimensional constant~$C_s$ (independent of~$\epsilon_1, \epsilon, \nu, \delta$) such that
  \begin{equation}\label{e:emp-HsBd}
    \E \norm{\mu_N - \pi_\epsilon}_{\dot H^{-s}}^2 \leq C_s \delta^2
    \,.
  \end{equation}
\end{theorem}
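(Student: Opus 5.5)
The plan is to reduce everything to a single second-moment bound on the normalized AIS weights and then control that quantity spectrally. Write $Z_\epsilon/Z_{\epsilon_1}$ for the normalizing ratio and let $w_K = \tilde w_K Z_{\epsilon_1}/Z_\epsilon$. By the standard Jarzynski/Neal identity, if $X_0\sim\pi_{\epsilon_1}$ then $\E[w_K f(X_K)] = \ip{f,\pi_\epsilon}$. Since $X_0$ is arbitrary, we first run $(4+\abs{\log_2\delta})t^\infty_{\mix,\epsilon_1}$ worth of time at temperature $\epsilon_1$; this is what the second term in the choice of $T$ is for. The law of $X_1$ is then within $L^\infty$-relative error $2^{-4}\delta$ of $\pi_{\epsilon_1}$, and that error propagates multiplicatively into the bias.

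With $\E w_K = 1$ (up to this bias), the error of the self-normalized estimator will be bounded in the usual way:
\[
\ip{f,\mu_N}-\ip{f,\pi_\epsilon} = \frac{\frac1N\sum_i w^i_K (f(X^i_K)-\ip{f,\pi_\epsilon})}{\frac1N\sum_i w^i_K}\,.
\]
Both numerator and denominator are averages of i.i.d.\ variables. Using $\abs{f-\ip{f,\pi_\epsilon}}\le\norm f_\osc$ and the elementary bound $\E\abs{A/B - a}^2\lesssim (\E w_K^2)/N$ (split on whether $B\ge 1/2$, using Chebyshev on $B$ and the trivial bound $\abs{\ip{f,\mu_N}-\ip{f,\pi_\epsilon}}\le\norm f_\osc$ on the bad event), we get $\E\abs{\cdot}^2\le 64\,\E[w_K^2]\norm f_\osc^2/N$ plus the bias. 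Hence the whole theorem reduces to showing $\E w_K^2\le \bar C_w$ uniformly in $\epsilon$.

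The $\dot H^{-s}$ bound will follow from the first one by applying it with $f=e_\xi$ (Fourier modes, $\norm{e_\xi}_\osc\le2$) and summing: $\E\norm{\mu_N-\pi_\epsilon}^2_{\dot H^{-s}}=\sum_{\xi\ne0}\abs\xi^{-2s}\E\abs{\ip{e_\xi,\mu_N-\pi_\epsilon}}^2\le 4\delta^2\sum\abs\xi^{-2s}$, which is finite precisely when $s>d/2$.

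The main obstacle is the uniform bound on $\E w_K^2$. The approach is to track $h_k = $ density of the weighted law relative to $\pi_{\epsilon_{k+1}}$, so that $\E w_K^2$ is controlled by $\norm{h_K}_{L^2(\pi_\epsilon)}^2$. Each step has two parts. First, reweighting by $\tilde\pi_{\epsilon_{k+1}}/\tilde\pi_{\epsilon_k}$ multiplies the norm by roughly $1+O(\nu^2)$ on the component orthogonal to the low-lying eigenspace, since $1/\epsilon_{k+1}-1/\epsilon_k=\nu$. Second, the Langevin semigroup contracts. For the double well with equal-depth nondegenerate wells, the generator has one exponentially small eigenvalue (the Arrhenius tunneling mode) and then a spectral gap of order $1$ uniformly in $\epsilon$. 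Running for time $T\ge C_T(1/\epsilon+\log K)$ kills the fast component up to a factor $K^{-1}e^{-1/\epsilon}$-small. The slow mode, essentially $\pm1$ on the two wells, is not contracted, but the equal depth of the wells means the reweighting preserves the relative well masses up to $O(\epsilon)$-type corrections per step.

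I would therefore decompose $h_k$ into its projection onto the span of $1$ and the slow eigenfunction plus a remainder. I would show that the slow coefficient stays bounded through a telescoping/Gronwall argument over the $K\sim 1/(\epsilon\nu)$ steps, using Laplace-type estimates for the well masses. I would show that the remainder stays $O(1)$ by the gap estimate. The delicate point is making the per-step growth summable over $K$ steps, and that is where the constants $C_T$ and $\bar C_w$ come in.
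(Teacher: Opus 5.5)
Your outer reductions are sound and essentially the paper's. The bias comes only from uniform mixing at the first level and propagates multiplicatively. The self-normalized error is bounded by a constant times $\E w_K^2/N$ plus the bias; the paper uses a triangle-inequality split rather than your split on $\{B\ge 1/2\}$, with the same effect. The $\dot H^{-s}$ bound is the same Fourier sum.

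The gap is the main step: the object you propose to track cannot control $\E w_K^2$. Let $h_k$ be the density of $A\mapsto \E[w_k\one_A(X_k)]$ with respect to $\pi_{\epsilon_{k+1}}$. Reversibility gives exactly $h_k = P_k^T h_{k-1}$, because reweighting by $r_k$ changes only the reference measure, not the function. Since $h_1 = P_1^T(x_0,\cdot)/\pi_{\epsilon_1}$, this yields $\norm{h_K - 1}_{L^\infty}\le 2^{-T/t^\infty_{\mix,\epsilon_1}}$ for \emph{every} annealing schedule. That is just the unbiasedness (bias) statement again. The second moment $\E w_K^2 = \E\prod_k r_k(X_k)^2$ is a path functional that this first-moment marginal does not see. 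Already for $K=1$ one has $\norm{h_1}_{L^2(\pi_\epsilon)}\approx 1$, while $\E w_1^2\approx 1+\chi^2(\pi_\epsilon;\pi_{\epsilon_1})\sim \epsilon^{-d/2}$. So a constant/slow/fast decomposition of $h_k$ proves nothing about the variance, and the step you call delicate (summability over $K$ steps) is left open. Note also that your heuristic per-step factor $1+O(\nu^2)$ over $K\approx 1/(\epsilon\nu)$ steps would give $e^{O(\nu/\epsilon)}$, not $O(1)$.

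You need a quantity in which $r_k$ enters quadratically. The paper conditions backwards, $\E \tilde w_K^2 = \E[\tilde w_{K-1}^2 (P_K^T \tilde r_K^2)(X_{K-1})]$, so that $\E w_K^2\le \prod_k \norm{P_k^T r_k^2}_{L^\infty}$. It then bounds each factor separately by expanding $P_k^T\tilde r_k^2$ in eigenfunctions of $\mathcal L_{\epsilon_k}$.

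\begin{itemize}
\item \emph{Constant mode.} Linear spacing of $1/\epsilon_k$ gives $\ip{\tilde r_k^2,\pi_{\epsilon_k}} = Z_{\epsilon_{k+2}}/Z_{\epsilon_k}$, which telescopes \emph{exactly}. This exact telescoping, not a per-step estimate, is what keeps the constant-mode product bounded.
\item \emph{Slow mode.} It contributes $\abs{\ip{\psi_{2,\epsilon_k},\pi_{\epsilon_{k+2}}}}\,\norm{\psi_{2,\epsilon_k}}_{L^\infty}\lesssim e^{-\hat\gamma/(2\epsilon_k)}+\abs{\pi_{\epsilon_{k+2}}(\Omega_1)-\pi_{\epsilon_k}(\Omega_1)}$. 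This is summable in $k$ because $\epsilon\mapsto\pi_\epsilon(\Omega_1)$ has bounded variation, and this is where the equal (or nearly equal) well depths enter.
\item \emph{Modes $i\ge 3$.} They contribute at most $1/K$ once $T\ge C_T(1/\epsilon+\log K)$. The $1/\epsilon$ pays for the factor $e^{\norm{U}_\infty/\epsilon}$ in the $L^\infty$ eigenfunction bounds.
\end{itemize}

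Then $\prod_k(1+a_k+1/K)\le e^{\sum_k a_k+1}$. Because each level is handled on its own in sup norm, no comparison of slow eigenfunctions across levels is needed. By contrast, a forward Gronwall-type tracking of a slow coefficient through the changing eigenbases, as you propose, is exactly the type of recursion that requires the much larger $T\gtrsim K^{(1+\alpha)\hat\gamma_r}$ in the paper's treatment of the autonormalized algorithm.
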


Here~$t_{\mix, \epsilon_1}^\infty$ is the \emph{uniform mixing time}  of~\eqref{e:langevin} on~$\T^d$ with~$\epsilon = \epsilon_1$ (see~\cite{LevinPeres17}, or~\eqref{e:umixDef}, below).
The notation~$\ip{f, \mu}$ used in~\eqref{e:emp-error-bd} is
\begin{equation}
  \ip{f, \mu} = \int_{\T^d} f \, d\mu
  \,,
\end{equation}
  and~$\norm{\cdot}_{\dot H^{-s}}$ denotes the norm in the homogeneous Sobolev space with index~$-s$ (for instance see~\cite{ArmstrongKuusiEA19}, or~\eqref{e:HsNormDef}, below).

\begin{remark}[Effective sample size]\label{r:essLangevinIntro}
  When working with weighted points as above, it is important to ensure that the weights don't concentrate on a few particles reducing the overall efficiency.
  This is often measured by the \emph{effective sample size} (see for instance Section~8.6 in~\cite{ChopinPapaspiliopoulos20}) defined by
  \begin{equation}\label{e:ess}
    \ESS(w_1, \dots, w_N)
      \defeq \frac{1}{\sum_{i = 1}^N w_i^2}
      \,.
  \end{equation}
  We will show (see Proposition~\ref{p:essBound} and Remark~\ref{r:essLangevin}, below) that Theorem~\ref{t:langevin} implies that the effective sample size is bounded by
  \begin{equation}
    \E \ESS(w_1, \dots, w_N)
      =
      \E\paren*{\frac{1}{\sum_{i=1}^N w_i^2}} \geq \frac{N}{8\paren*{4\bar C_w +1}}\,.
  \end{equation}
\end{remark} 

\begin{remark}[Computational complexity]\label{r:complexityAis}
  We now discuss the asymptotic behavior of the computational complexity as the temperature~$\epsilon \to 0$, and as the allowed error~$\delta \to 0$.
  Assuming the computational cost of simulating~\eqref{e:langevin} for time~$T$ is~$O(T)$, and neglecting the discretization error, the computational cost of Algorithm~\ref{a:aisLMC} to achieve the error bounds~\eqref{e:emp-error-bd} or~\eqref{e:emp-HsBd} is~$O(K T N)$.
  Unravelling the~$\epsilon$ and~$\delta$ dependence in~\eqref{e:K-choice-emp-intro}--\eqref{e:N-choice-emp-intro} this implies
  \begin{equation}\label{e:complexityLAIS}
    \operatorname{complexity}(\operatorname{Algorithm}\ref{a:aisLMC})
      = O(KTN)
      \leq \frac{C_d}{\delta^2} \paren[\Big]{
	\frac{1}{\epsilon^2} + \abs{\ln \delta} 
      }
      \,,
  \end{equation}
  for some constant~$C_d = C_d(U)$.
  For reference we mention that the computational complexity of Langevin Monte Carlo under these conditions is~$e^{O(1/\epsilon)}$, and the complexity of rejection sampling is~$O(1/\epsilon^d)$.
  A more comprehensive discussion and comparison is at the end of Section~\ref{s:litReview}, below.
\end{remark}

\begin{remark}[Multiple wells, and wells of unequal depth.]\label{r:unequal}
  The assumption that~$U$ is a double well potential with wells of equal depth can be relaxed.
  If the wells have ``nearly equal depth'' so that the distribution remains truly multimodal in a temperature range, then Theorem~\ref{t:langevin} will still hold.
  The required assumptions and generalized theorem is Theorem~\ref{t:langevinGen} in Section~\ref{s:aisConv}, below.
  If there are more than two wells, then Theorem~\ref{t:langevin} will still hold provided we make the same non-degeneracy assumptions as in Section 10 in~\cite{HanIyerEA26}.
\end{remark}

\begin{remark}
  The constants~$C_T$ and~$\bar C_w$ in Theorem~\ref{t:langevin} involve dimensional factors that arise in spectral estimates and Sobolev embedding theorems.
  As a result, their dimensional dependence is not explicit (see~\eqref{e:CTdef} and~\eqref{e:T0-tildeCw-choice}, below).
  In special cases where $U$ has a low dimensional structure, the dimensional dependence of these constants can be controlled for an idealized model problem (similar to Proposition 3.1 in~\cite{HanIyerEA26}), but their dimensional dependence is not explicit for the full Langevin system.
\end{remark}

\begin{remark}
  In practice, one often has a target distribution of the form~$\pi \propto e^{-V}$ which is hard to sample from.
  By the Arrhenius law, the complexity of using LMC to directly sample from~$\pi \propto e^{-V}$ is~$e^{O(H)}$ where~$H$ is the energy barrier.
  If instead if we choose~$\epsilon = 1/H$ and $U = \epsilon V$, and use Langevin AIS to sample from~$\pi = \pi_\epsilon \propto e^{-U / \epsilon}$, then by Remark~\ref{r:complexityAis} the time complexity is now only~$O(1/\epsilon^2) = O(H^2)$.
\end{remark}

The proof of Theorem~\ref{t:langevin} consists of two steps: The first step is a general result concerning AIS.
Given a family of distributions~$\pi_1$, \dots, $\pi_{K+1}$, with corresponding Markov transition kernels~$P_1$, \dots, $P_{K+1}$, we estimate  the variance of using AIS to sample from~$\pi_{K+1}$ in terms of
\begin{equation}\label{e:Ptrk2-ub-intro}
  C_P(T) \defeq
  \prod_{k = 1}^K
    \norm{P_{k}^T r_{k}^2}_{L^\infty}
    \,,
  \quad\text{where}\quad
  r_{k} \defeq \frac{\pi_{k+1}}{\pi_{k}}
  \,.
\end{equation}
This is the content of Proposition~\ref{p:expectedDiff} and Theorem~\ref{t:ais}, below.
We will shortly examine the quantity~$C_P(T)$ further and note its resemblance to the product of the~$\chi^2$-divergences~$\chi^2(\pi_{k+1}; \pi_k)$ in Section~\ref{s:chi2}, below.

The second step in the proof of Theorem~\ref{t:langevin} bounds the quantity~$C_P(T)$ for the specific sequence of intermediate sequence of distributions used in Theorem~\ref{t:langevin}.
We do this using certain spectral properties of the generator of~\eqref{e:langevin}.
These properties are a combination of results in~\cite{
  Kolokoltsov00,
  BovierGayrardEA05,
  MenzSchlichting14
}, and are collected in a convenient form in~\cite{HanIyerEA26}.
The full proof is presented in Section~\ref{s:langevin}, below.

\subsubsection{Autonormalized Langevin AIS}

We also consider an \emph{autonormalized} version of Algorithm~\ref{a:aisLMC}.
Namely, instead of returning the unnormalized weights~$\tilde w_K$, we run~$N$ independent realizations and normalize the weights using an empirical average.
This is a commonly used idea in many \emph{Sequential Monte Carlo} samplers (see for instance~\cite{DoucetFreitasEA01,
  Liu08,
  ChopinPapaspiliopoulos20
}), and performs well in many situations of practical interest.
For convenience, we now state the version of this algorithm precisely.

\begin{algorithm}[H]
  \caption{Autonormalized Langevin Annealed Importance Sampling}\label{a:aisAutoNormalized}
  \begin{algorithmic}[1]
    \item[\textbf{Tunable parameters:}]
    \Statex
      \begin{enumerate}[(1)]
	\item Target temperature~$\epsilon > 0$.
	\item Annealing schedule $K \in \N$ and~$\epsilon_1 > \cdots > \epsilon_{K+1} = \epsilon$
	\item LMC simulation time~$T$.
	\item Number of particles~$N \in \N$.
      \end{enumerate}

    \State Start with~$w^1_0 = \cdots w^N_0 = 1/N$, and $X^1_0$, \dots, $X^N_0$ arbitrary.
    \For{$k=1, \dots, K$}
      \For{$i = 1, \dots, N$}
	\State Simulate~\eqref{e:langevin} with temperature~$\epsilon_k$, starting from~$X^i_{k-1}$, and let~$X^i_k$ be the state after time~$T$.
	\State $\tilde w^i_{k} \gets w^i_{k-1} \frac{\tilde \pi_{\epsilon_{k+1}}(X^i_k)}{\tilde \pi_{\epsilon_k}(X^i_{k})}$.
      \EndFor
      \State Let~$\tilde W_k = \sum_{j=1}^N \tilde w^j_k$, and for each~$i \in \set{1, \dots, N}$ set~$w^i_k = \tilde w^i_k / \tilde W_k$.
    \EndFor
    \State \Return~$X^1_K$, \dots, $X^N_K$ and the normalized weights~$w^1_K$, \dots, $w^N_K$.
  \end{algorithmic}
\end{algorithm}

\begin{theorem}\label{t:langevinGen-anais-intro}
  Suppose the configuration space~$\mathcal X = \T^d$, and~$U$ is a regular double-well potential with non-degenerate wells of equal depth.
  Given~$\alpha, \nu > 0$, and~$\epsilon_1 > 0$, there exists constants~$ C_N$ and~$\hat C_T$ such that the following holds.

  For every~$\delta > 0$ and~$\epsilon \in (0, \epsilon_1)$, choose
  \begin{align}
    \label{e:K-choice-emp-anais-intro}
    K &= \ceil[\bigg]{ \frac{1}{\epsilon \nu} }
    \,,
    \\
    \label{e:T-choice-emp-anais-intro}
    T &> \hat C_T\paren[\bigg]{K^{(1+\alpha)} + \frac{1}{\epsilon} + \log\paren[\Big]{\frac{1}{\delta}} + \log N}
    \,,
    \\
    \label{e:N-choice-emp-anais-intro}
    N &= \ceil[\bigg]{\frac{ C_N}{\delta^2}}K^2
    \,.
  \end{align}
  Let~$\epsilon_2$, \dots, $\epsilon_{K+1} = \epsilon$ be such that~$\set{1/\epsilon_k}_{1 \leq k \leq K+1}$ are linearly spaced.
  Run Algorithm~\ref{a:aisAutoNormalized} with these parameters, and let~$\set{X^i_K, w^i_K}_{1 \leq i \leq N}$ be the~$N$ points and normalized weights returned.
 Define the empirical measure~$\mu_N$ by
  \begin{equation}
    \mu_{N} \defeq \sum_{i=1}^N w^i_K \delta_{X^i_K}\,.
  \end{equation}
  Then, for every bounded measurable test function~$f$, we have
  \begin{equation}
    \E\abs[\big]{
      \ip{f, \mu_{N}} - \ip{f, \pi_{\epsilon}}
    }^2
    \leq \norm{f}_\osc^2 \delta^2
    \,.
  \end{equation}
  Consequently, for every~$s > d/2$ there exists an explicit dimensional constant~$\hat C_s$ (independent of~$\epsilon_1, \epsilon, \alpha, \nu, \delta$) such that
  \begin{equation}
    \E\norm[\big]{\mu_{N} - \pi_{\epsilon}}_{H^{-s}}^2
      \leq C_s \delta^2
      \,.
  \end{equation}
\end{theorem}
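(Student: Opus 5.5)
The plan has two steps, mirroring the proof of Theorem~\ref{t:langevin}: a general error bound for the autonormalized scheme, then a spectral input for the Langevin case. Let $\pi_k = \pi_{\epsilon_k}$, let $P_k^T$ be the time-$T$ Langevin kernel at temperature $\epsilon_k$, and let $r_k = \pi_{k+1}/\pi_k$. Write $\eta^N_k \defeq \sum_i w^i_k \delta_{X^i_k}$ for the weighted empirical measure after step $k$; the target of step $k$ is $\pi_{k+1}$.

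The core idea is a telescoping (Feynman--Kac type) decomposition. Define the exact one-step map
\begin{equation*}
  \Phi_k(\mu) \defeq \frac{(\mu P_k^T)\, r_k}{\ip{r_k, \mu P_k^T}} ,
\end{equation*}
which satisfies $\Phi_k(\pi_k) = \pi_{k+1}$ by stationarity. Algorithm~\ref{a:aisAutoNormalized} produces $\eta^N_k$ as a weighted-particle approximation of $\Phi_k(\eta^N_{k-1})$. I would then write
\begin{equation*}
  \eta^N_K - \pi_{K+1}
  = \sum_{k=1}^K \bigl( \Phi_{k+1:K}(\eta^N_k) - \Phi_{k+1:K}(\Phi_k(\eta^N_{k-1})) \bigr)
  + \bigl( \Phi_{1:K}(\eta^N_0) - \pi_{K+1} \bigr) .
\end{equation*}
The last term measures the bias from the arbitrary initial points. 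Because every $\Phi_k$ contains a Langevin step of length $T$ at temperature at least $\epsilon$, I expect it to contract like $e^{-cT}$ after mixing. The choice of $T$ containing $\log(1/\delta)$ is designed to make this term at most $\delta/2$.

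Each summand has two parts: a local sampling error, and a propagation error through the remaining steps.

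\textbf{Local sampling error.} Conditionally on the previous step, $X^i_k$ are independent draws from $P_k^T(X^i_{k-1},\cdot)$. The weights are a ratio of empirical averages. A standard delta-method/ratio estimate for self-normalized importance sampling then bounds the conditional $L^2$ error of $\ip{g,\eta^N_k}$ against $\ip{g,\Phi_k(\eta^N_{k-1})}$. The bound is
\begin{equation*}
  \frac{C\,\norm{g}_\osc^2}{N} \cdot \norm{P_k^T r_k^2}_{L^\infty} \cdot \frac{1}{\ip{r_k, \eta^N_{k-1} P_k^T}^2} ,
\end{equation*}
up to a correction from $\sum_i (w^i_{k-1})^2$, which is controlled by the effective sample size as in Proposition~\ref{p:essBound}.

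\textbf{Propagation error.} The maps $\Phi_{k+1:K}$ are Lipschitz in oscillation norm, with a constant involving $\prod_j \norm{P_j^T r_j}_{L^\infty}$ and the mixing contraction.

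\textbf{Single-step ratios via the spectral estimates.} By the choice of $K$, the inverse temperatures are linearly spaced with gap $\nu$. Hence $r_k = \frac{Z_k}{Z_{k+1}} e^{-\nu U}$ is bounded uniformly in $k$ and $\epsilon$, with $\norm{r_k}_{L^\infty} \leq 1 + O(\nu)$ after normalization. The double-well spectral estimates collected from~\cite{HanIyerEA26} (used earlier to bound $C_P(T)$) give two facts:
\begin{itemize}
  \item $P_k^T$ relaxes within each well on the time scale $1/\epsilon + \log K$;
  \item equal well depths keep the mass in each well bounded away from $0$.
\end{itemize}
Together these give $\norm{P_k^T r_k^2}_{L^\infty} \leq 1 + C\nu$ and $\ip{r_k, \mu P_k^T} \geq c > 0$ for any $\mu$, once $T$ exceeds the local mixing scale.

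\textbf{Summing up.} The error is a sum of $K$ terms, each of order $\norm{f}_\osc^2/N$ times a bounded constant, plus an accumulated propagation factor. Taking $N \sim K^2/\delta^2$ is exactly what absorbs a worst-case (Cauchy--Schwarz) summation of $K$ errors of size $N^{-1/2}$. This yields $\E\abs{\ip{f,\mu_N} - \ip{f,\pi_\epsilon}}^2 \leq \norm{f}_\osc^2 \delta^2$. The $H^{-s}$ bound follows by applying this to Fourier modes $f = e^{2\pi i m\cdot x}$, which have $\norm{f}_\osc \leq 2$, and summing with weights $\abs{m}^{-2s}$, which converges for $s > d/2$.

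\textbf{Main obstacle.} The hardest step is the propagation factor. The product of Lipschitz constants of $\Phi_{k+1:K}$ is not obviously bounded: a naive estimate gives $(1+C\nu)^K$, which is fine, but the normalization denominators depend on the random $\eta^N_{k-1}$. These are not the true $\pi_k$, so in particular the relative well masses can drift. I would handle this by showing inductively, on a high-probability event, that $\eta^N_{k-1}$ puts mass $\geq c$ in each well. That makes $\Phi_k$ a contraction of the form $\mathrm{osc}(\Phi_k\mu - \Phi_k\mu') \leq (1 - e^{-cT}\ldots)$ plus a small term.

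The $K^{1+\alpha}$ in $T$ is where I expect to pay for this. Inter-well equilibration is not available, since it is exponentially slow, so the relative well masses are fixed only by the reweighting. Their errors must be prevented from compounding across the $K$ steps. To do so, I would require the intra-well mixing error $e^{-cT}$ to be smaller than $K^{-(1+\alpha)}$ times the contraction slack, with a union bound over $N$ particles, which explains the $\log N$. The complement event would be controlled by a crude $L^\infty$ bound on the weights.
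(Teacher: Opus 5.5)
\textbf{There is a genuine gap, in the propagation step, and the mechanism you sketch for it would fail.}

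First, $(1+C\nu)^K$ is not ``fine''. Since $K\nu\ge 1/\epsilon$, it is at least $e^{c/\epsilon}$, which is exactly the Arrhenius cost the algorithm is meant to avoid. For the same reason, a per-level bound like $\norm{P_k^T r_k^2}_{L^\infty}\le 1+C\nu$ is useless in any product over levels.

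Second, $\Phi_k$ does not contract the error that matters at any level with $T\ll e^{\hat U/\epsilon_k}$, which is all but $O(\log T)$ of them. At those levels $P_k^T$ moves essentially no mass between $\Omega_1$ and $\Omega_2$. After intra-well relaxation, reweighting by $r_k$ multiplies the odds $\mu(\Omega_1)/\mu(\Omega_2)$ by the factor $\frac{\pi_{k+1}(\Omega_1)\,\pi_k(\Omega_2)}{\pi_k(\Omega_1)\,\pi_{k+1}(\Omega_2)}$, and it multiplies the true odds by the same factor. So the relative mass imbalance is transported unchanged, not damped. Lower bounds on the well masses of $\eta^N_{k-1}$ do not change this.

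A bounded propagation constant therefore has to come from showing that the per-level factors are $1+a_k$ with $\sum_k a_k$ bounded uniformly in $\epsilon$. This needs Property~\ref{p:var}, which your proposal never uses. The mismatch $e^{-\hat\gamma/2\epsilon_k}$ is geometric in $k$ because the $1/\epsilon_k$ are linearly spaced. The sum $\sum_k\abs{\pi_{k+1}(\Omega_1)-\pi_k(\Omega_1)}$ is bounded because $\epsilon\mapsto\pi_\epsilon(\Omega_1)$ has bounded variation.

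Two smaller points. Your account of the $K^{1+\alpha}$ term is inconsistent: forcing $e^{-cT}\le K^{-(1+\alpha)}$ only needs $T$ of order $\log K$. And the initial-bias term is not removed by mixing at every level, since only level $1$ mixes on an $O(1)$ time scale. It is controlled by positivity plus first-level uniform mixing, as in Lemma~\ref{l:bias_M0}.

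\textbf{How the paper closes this, and a shorter repair.} The paper supplies the missing ingredient by tracking the imbalance as the $\psi_{2,k}$-component of the error. Lemma~\ref{l:errkth-iter} shows that the only error carried undamped through the Langevin step is $e^{-\lambda_{2,k}T}\abs{\ip{h\psi_{2,k},\pi_k}}\Err_{k,0}(\psi_{2,k})$. The other contributions are the sampling term, controlled by Proposition~\ref{p:essANLangevin}, and an $e^{-\Lambda T}$ remainder. The recursion of Lemma~\ref{l:errk10psi-iter}, with $\prod_j\beta_j\le C_\beta$ and $c_k\le\delta/K$, is where $N\sim K^2/\delta^2$ and the $K^{(1+\alpha)\hat\gamma_r}$ in $T$ come from.

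There is also a much shorter repair that makes the propagation analysis unnecessary.
\begin{enumerate}
\item Algorithm~\ref{a:aisAutoNormalized} never resamples, so the $N$ trajectories are independent. By induction on $k$, $w^i_K=\prod_k\tilde r_k(X^i_k)/\sum_j\prod_k\tilde r_k(X^j_k)$.
\item Hence its output is exactly that of $N$ independent runs of Algorithm~\ref{a:aisLMC} normalized as in~\eqref{e:muN-def}, and the statement follows from Theorem~\ref{t:langevin}.
\item The error bound in the proof of Theorem~\ref{t:ais} only improves as $N$ grows, so it suffices to take $C_N\ge 64\bar C_w$.
\item Take $\hat C_T$ large enough that the present $T$ dominates~\eqref{e:T-choice-emp-intro} when $\delta<1$; for $\delta\ge 1$ the claim is trivial.
\item Subtract a constant from $f$ to replace $\norm{f}_\infty$ by $\norm{f}_\osc$.
\item Distinct starting points $X^i_0$ are harmless: normalize by $Z_{K+1}/Z_1$ instead of $\E\tilde w^1_K$, since Lemma~\ref{l:bias_M0} and the second-moment bound are uniform in the starting point.
\end{enumerate}
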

\begin{remark}[Effective sample size]
  We will also estimate the effective sample size and show that there exists a constant~$C_1 = C_1(\nu, \epsilon_1)$ such that 
  \begin{equation}
    \E \ESS(w^1_{k}, \dots, w^N_{k}) \geq \frac{N}{C_1\bar C_w^2}\,,
  \end{equation}
  where~$\bar C_w$ is the constant from Theorem~\ref{t:langevin}.
  This is the content of Proposition~\ref{p:essANLangevin}, below.
\end{remark}

\begin{remark}[Computational complexity]
  We now discuss the asymptotic behavior of the computational complexity as the temperature~$\epsilon \to 0$, and as the allowed error~$\delta \to 0$.
  As with Remark~\ref{r:complexityAis}, the computational cost of Algorithm~\ref{a:aisAutoNormalized} is~$O(K T N)$.
  Unravelling the~$\epsilon$ and~$\delta$ dependence in~\eqref{e:K-choice-emp-anais-intro}--\eqref{e:N-choice-emp-anais-intro} this implies
  \begin{equation}\label{e:complexityLAAIS}
    \operatorname{complexity}(\operatorname{Algorithm}\ref{a:aisAutoNormalized})
      = O(KTN)
      \leq \frac{C_d}{\delta^2 \epsilon^3} \paren[\Big]{
	\frac{1}{\epsilon^{1 + \alpha}} + \abs{\ln \delta} 
      }
      \,.
  \end{equation}
  While this is much smaller than the complexity of LMC (which is~$e^{O(1/\epsilon)}$), or rejection (which is~$O(1/\epsilon^d)$), it is larger than the complexity of Algorithm~\ref{a:aisLMC} which is~$O(1/\epsilon^2)$.
  In practice, Langevin AIS and autonormalized Langevin AIS perform comparably, and the reason the bound~\eqref{e:complexityLAAIS} is worse than~\eqref{e:complexityLAIS} may be due to suboptimality of of our estimates.
\end{remark}

The proof of Theorem~\ref{t:langevinGen-anais-intro} is a little more involved than Theorem~\ref{t:langevin} because the processes~$\set{X^i_K, w^i_K \st 1 \leq i \leq N}$ are not independent and identically distributed, but only exchangeable.
As a result, we are presently unable to deduce Theorem~\ref{t:langevinGen-anais-intro} from a general one particle result, as we will do for Theorem~\ref{t:langevin}.
Moreover, the lack of independence introduces a few technical difficulties in the proof and this leads to bounds that are worse than those in Theorem~\ref{t:langevin}.
Explicitly, Theorem~\ref{t:langevinGen-anais-intro} the choice~\eqref{e:K-choice-emp-anais-intro}--\eqref{e:N-choice-emp-anais-intro} requires~$N$ to grow like~$1/\epsilon^2$, where as in Theorem~\ref{t:langevin} one can choose~$N$ independent of~$\epsilon$.
Moreover, the simulation time~$T$ in~\eqref{e:T-choice-emp-anais-intro} now grows like~$1/\epsilon^{1 + \alpha}$, where as in Theorem~\ref{t:langevin} it only needed to grow like~$1/\epsilon$.
In spite of the difference in the provable bounds in this situation, Algorithm~\ref{a:aisAutoNormalized} performs well in practice and is used often~\cite{DoucetFreitasEA01,
  Liu08,
  ChopinPapaspiliopoulos20
}.
We present the proof in Section~\ref{s:anais}, below.

\subsection{Literature review}\label{s:litReview}

Sampling from distributions is a longstanding problem that arises in many applications such as Bayesian inference, statistical Physics and machine learning. 
In many situations of practical interest, the state space~$\mathcal X$ is huge (either finite, but with a computationally intractable size, or a high dimensional manifold).
For any given state~$x \in \mathcal X$, one can typically compute an energy~$U(x)$ measuring how favorable the state is.
Standard models (e.g. the canonical ensemble in statistical physics) dictate that the probability of finding the system in state~$x$ is proportional to~$\tilde \pi_\epsilon(x) = e^{-U(x) / \epsilon}$ (as in~\eqref{e:piDef}), where~$\epsilon > 0$ is a parameter (often the absolute temperature) controlling how fast the system transitions between states.

Practically, the normalization constant~$Z^\epsilon$ (often called the partition function) is a high dimensional integral (or a sum over huge number of states) and is computationally intractable.
Moreover, even if the normalization constant~$Z^\epsilon$ is known, the enormous state space requires the use of algorithms that can deliver samples even though they can only inspect a miniscule fraction of all possible states.
Such algorithms aren't easy to design, or rigorously analyze, and this makes such sampling problems extremely challenging. 

Markov Chain Monte Carlo (MCMC) is a family of algorithms that is often used to address such problems.
These work by simulating a Markov process whose stationary distribution is~$\pi_\epsilon$, and date back to the celebrated Metropolis--Hastings algorithm~\cite{MetropolisRosenbluthEA53,Hastings70}.
In Euclidean space the Langevin dynamics~\eqref{e:langevin} provides a particularly convenient Markov process with stationary distribution~$\pi_\epsilon$, and this is the basis of \emph{Langevin Monte Carlo (LMC)}, \emph{Metropolis Adjusted Langevin Monte Carlo (MALA)} and various other sampling algorithms (see for instance~\cite{Liu08}).

One issue that requires attention when using MCMC based algorithms is the rate at which the Markov chain converges to equilibrium.
If this rate is too slow, it may require simulating the Markov chain for impractical amounts of time before obtaining good samples.
In some cases, one has quantitative estimates on the rate of convergence.
For~\eqref{e:langevin}, it is known that if~$U$ is uniformly convex, then
\begin{equation}
  W_2( \dist(X^\epsilon_t), \pi_\epsilon )
    \leq e^{-C(U) t / \epsilon}
    W_2( \dist(X^\epsilon_0), \pi_\epsilon )
    \,,
\end{equation}
which means that simulating~\eqref{e:langevin} will yield good samples of the Gibbs measure in short time, even when the temperature~$\epsilon$ is small.
Vempala and Wibisono~\cite{VempalaWibisono19} (see also~\cite{Chewi23}) showed that this is also true for the Euler--Maruyama discretization of~\eqref{e:langevin}, making LMC a practically viable sampling algorithm for log concave distributions even in high dimensions.

When~$U$ is not convex, however, the situation is different.
If~$U$ is multi-modal (for instance, if~$\pi_1$ is a Gaussian mixture), then the convergence rate of~\eqref{e:langevin} is exponentially small in~$1/\epsilon$.
That is, it takes time~$e^{O(1/\epsilon)}$ for the distribution of~$X^\epsilon_t$ to become close to~$\pi_\epsilon$.
This phenomenon is known as the Arrhenius law~\cite{Arrhenius89}, and occurs for the following reason.
Since the drift in~\eqref{e:langevin} pulls trajectories towards local minima of~$U$, the noise term in~\eqref{e:langevin} has to go against the drift for an~$O(1)$ amount of time in order for trajectories to transition from the basin of attraction of one local minimum to another.
This happens with exponentially small probability, leading to the Arrhenius law.

Several methods have been designed to improve the rate of convergence.
Tempering methods introduced in~\cite{
  SwendsenWang86,
  Neal96a,
  MarinariParisi92,
  Neal11
}
run a Markov chain on a product of the state space at various temperature levels.
Other methods include ideas based on 
  birth-death~\cite{LuLuEA19},
  optimization~\cite{PompeHolmesEA20},
  diffusion models~\cite{ChehabKorbaEA25},
  warm starts~\cite{KoehlerLeeEA25,LeeSantanaGijzen25}.
Authors have also modified~\eqref{e:langevin} by adding a drift~\cite{
  ReyBelletSpiliopoulos15,
  DamakFrankeEA20,
  ChristieFengEA25},
or modifying the diffusion~\cite{EngquistRenEA24}.
In some situations~\cite{
  WoodardSchmidlerEA09,
  GeLeeEA20,
  Son26
}
polynomial convergence bounds have been rigorously proved.

The methods most closely related to this paper are known as \emph{Sequential Monte Carlo (SMC)} algorithms.
The first such instance was developed to study of the average extension of molecular chains~\cite{HammersleyMorton54,RosenbluthRosenbluth55}.
SMC methods use a sequence of auxiliary distributions~$\nu_1$, \dots, $\nu_K$ so that~$\nu_1$ is easy to sample from, $\nu_K$ is the target distribution, and then move samples between distributions using a reweighting / resampling mechanism.
These methods are hugely popular, and we refer the reader to~\cite{DoucetFreitasEA01,Liu08,ChopinPapaspiliopoulos20,SyedBouchardCoteEA24} for a broad overview.
In the context of multi-modal distributions, rigorous convergence bounds were proved in~\cite{
  Schweizer12,
  PaulinJasraEA19,
  MathewsSchmidler24,
  LeeSantanaGijzen24,
  Han25,
  HanIyerEA26
}.

The algorithms we use in this paper (Algorithms~\ref{a:aisLMC} and~\ref{a:aisAutoNormalized}) are obtained using Neal's \emph{Annealed Importance Sampling (AIS)} algorithm~\cite{Neal01}, combined with Langevin Monte Carlo.
While AIS and Langevin AIS are immensely popular, there are very few rigorous convergence bounds that apply in the setting of Theorem~\ref{t:langevin} and~\ref{t:langevinGen-anais-intro}.
Specifically, in our setting we make no apriori assumption about the mixture decomposition, symmetry, shape of the wells, or apriori assume knowledge of how the mass distribution in wells changes as the temperature varies.
We only assume non-degeneracy of critical points, and regularity of~$U$.
(The assumption that~$U$ is a double well potential with wells of equal depth can be relaxed as mentioned in Remark~\ref{r:unequal}.)
To the best of our knowledge, the only results that apply in this setting are~\cite{HanIyerEA26,Han25,Son26}, and we now comment on the relationship between these results and the present paper.

In~\cite{HanIyerEA26} the authors used an SMC algorithm which resampled points at every level, instead of reweighting them as we do in Algorithms~\ref{a:aisLMC} and~\ref{a:aisAutoNormalized}.
The advantage of this method is that it is keeps the effective sample size constant, and is extremely popular in practice~\cite{ChopinPapaspiliopoulos20}.
The disadvantage is that particles are now only exchangeable, and not independent, and so theoretical bounds harder to obtain.
In~\cite{HanIyerEA26} the authors show the complexity is exactly the same as that of Algorithm~\ref{a:aisAutoNormalized} (given by~\eqref{e:complexityLAAIS}).

In~\cite{Han25} the author shows the same SMC algorithm can be used in the non-compact setting when the state space~$\mathcal X = \R^d$.
The author uses a coupling argument in~\cite{MarionMathewsEA23} and obtains error estimates in probability.
For a fixed error, the time complexity of this algorithm is~$O(\abs{\ln \epsilon}^{10/3}/\epsilon^7)$ as~$\epsilon \to 0$.

Finally in~\cite{Son26} revisits this sampling problem using Metropolis ball walks and parallel tempering.
The author uses a soft domain decomposition~\cite{MadrasRandall02,WoodardSchmidlerEA09} to show that for a fixed error, the time complexity behaves like~$O(1/\epsilon^{11})$ as~$\epsilon \to 0$.

As mentioned earlier (Remark~\ref{r:complexityAis} and~\eqref{e:complexityLAIS}), for a fixed error the time complexity of Langevin AIS (Algorithm~\ref{a:aisLMC}) scales like~$O(1/\epsilon^2)$ as~$\epsilon \to 0$.
The main new contribution of this paper of this paper over~\cite{HanIyerEA26,Han25,Son26} is twofold.
First, the algorithm has smaller complexity ($O(1/\epsilon^2)$, vs~$O(1/\epsilon^4)$ or higher).
Second, the proof identifies a simple and useful quantity, $C_P(T)$, that controls the sample error of AIS (equation~\eqref{e:Ptrk2-ub-intro}, see also Theorem~\ref{t:ais} and Section~\ref{s:chi2}, below) in a general setting.
We presently prove Theorem~\ref{t:langevin} by bounding~$C_P(T)$ using spectral estimates from~\cite{
  Kolokoltsov00,
  BovierGayrardEA05,
  MenzSchlichting14,
  HanIyerEA26}.
There may be room to bound~$C_P(T)$ using different techniques, bypassing the limitation of spectral methods, but this goes beyond the scope of the present paper and is left for future study.

\subsection*{Plan of this paper}

In Section~\ref{s:ais} we state two results (Proposition~\ref{p:expectedDiff} and Theorem~\ref{t:ais}) addressing convergence of AIS in a general setting, and control the sampling error in terms of~$C_P(T)$.
In Section~\ref{s:aisConv} we study Langevin AIS and its autonormalized version, and state generalizations of Theorems~\ref{t:langevin} and~\ref{t:langevinGen-anais-intro}.
In Section~\ref{s:aisProof} we prove Proposition~\ref{p:expectedDiff} and Theorem~\ref{t:ais}.
In Section~\ref{s:langevin} we prove the generalization of Theorem~\ref{t:langevin} (Theorem~\ref{t:langevinGen}), and finally in Section~\ref{s:anais} we prove the generalization of Theorem~\ref{t:langevinGen-anais-intro} (Theorem~\ref{t:langevinGen-anais}).

\section{Convergence results for AIS}\label{s:ais}

As mentioned earlier, we prove Theorem~\ref{t:langevin} by showing that the quantity~$C_P(T)$ in~\eqref{e:Ptrk2-ub-intro} can be used to control the sampling error of AIS in a general setting.
We begin by stating this precisely.

\subsection{Bias and Variance estimates for AIS}\label{s:meanVariance}
Suppose~$\pi = \tilde \pi / Z$ is a target distribution from which samples are desired.
Here~$\tilde \pi$ is an unnormalized probability distribution which is easy to compute, and~$Z = \int_{\mathcal X} \tilde \pi$ is the normalization constant (which is hard to compute).
The AIS algorithm~\cite{Neal01} uses an auxiliary family of distributions with densities proportional to~$\tilde \pi_1$, \dots, $\tilde \pi_{K+1} = \tilde \pi$ (called a \emph{tempering sequence}), along with reversible Markov transition kernels~$P_1$, \dots, $P_K$.
The algorithm now successively simulates a Markov chain with kernel~$P_k$, and then reweights the points using the ration~$\tilde \pi_{k+1} / \tilde \pi_k$ and is described precisely as Algorithm~\ref{a:ais2}.
\begin{algorithm}[htb]
  \caption{Annealed Importance Sampling (AIS)}\label{a:ais2}
  \begin{algorithmic}[1]
    \item[\textbf{Requirements:}]
    \Statex
      \begin{enumerate}[(1)]
	\item Tempering sequence of unnormalized densities~$\tilde \pi_1$, \dots, $\tilde \pi_K$.
	\item Corresponding reversible Markov transition kernels~$P_1$, \dots, $P_K$.
	\item Running time~$T$.
      \end{enumerate}
    \State Start with~$w_0 = 1$, $X_0$ arbitrary.
    \For{$k=1, \dots, K$}
      \State Sample~$X_k$ from  $P_{k}^T(X_{k-1}, \cdot)$
      \State $\tilde w_{k} \gets \tilde w_{k-1} \frac{\tilde \pi_{k+1}(X_k)}{\tilde \pi_{k}(X_{k})}$.
	
    \EndFor
    \State \Return~$X_{K}$ and the unnormalized weight~$\tilde w_{K}$.
  \end{algorithmic}
\end{algorithm}

The goal of this section is to obtain a quantitative error bound on AIS, provided the tempering sequence satisfies the following assumptions.

\begin{assumption}\label{a:reversible}
  For each~$k \in \set{1, \dots, K+1}$ we have a reversible Markov transition kernel~$P_k$ whose stationary distribution is~$\pi_k = \tilde \pi_k / Z_k$, where~$Z_k = \int_{\mathcal X} \tilde \pi_k$.
\end{assumption}

\begin{assumption}\label{a:r2}
There exists constants~$T_0$, $C_w$ such that for every~$T \geq T_0$ we have
  \begin{equation}\label{e:Ptrk2-ub}
    \prod_{k = 1}^K
      \norm{P_{k}^T r_{k}^2}_{L^\infty} \leq C_w
      \,,
    \quad\text{where}\quad
    r_{k} \defeq \frac{\pi_{k+1}}{\pi_{k}}
    \,.
  \end{equation}
\end{assumption}

\begin{assumption}\label{a:L1Linf}
  The uniform mixing time of~$P_1$ is finite.
  Explicitly,
  \begin{equation}\label{e:umixDef}
    t_{\mix, 1}^\infty \defeq \min \set[\bigg]{n \in \N \st \sup_{x \in \mathcal X} \norm[\bigg]{ \frac{P_1^n(x, \cdot)}{\pi_1(\cdot)} - 1 }_\infty < \frac{1}{2} }
    < \infty
  \end{equation}
\end{assumption}

We will now show that if we use AIS with a tempering sequence satisfying Assumptions~\ref{a:reversible}--\ref{a:L1Linf}, then the bias decreases exponentially with~$T/t^\infty_{\mix, 1}$, and variance is bounded by $4 C_w$.

\begin{proposition}[Bias and variance bounds]\label{p:expectedDiff}
  Suppose~$\set{P_k}_{1 \leq k \leq K}$ are Markov transition kernels satisfying Assumptions~\ref{a:reversible}--\ref{a:L1Linf}.
  Let~$X_K$, $\tilde w_K$ be obtained from Algorithm~\ref{a:ais2} with~$K+1$ levels, running time~$T$, and kernels~$\set{P_k}$.
  For all bounded measurable functions~$f$, we have the bias and variance estimates
  \begin{gather}
    \label{e:normalized-bias}
    \abs[\bigg]{\frac{\E \tilde w_{K} f(X_{K})}{\E \tilde w_{K}} - \ip{f, \pi_{K+1}}}
      \leq 2^{2- T / t_{\mix, 1}^\infty} \norm{f}_\infty\,,
      \quad\text{provided} \quad T \geq t_{\mix, 1}^\infty
    \\
    \label{e:variance-estimate}
      \var\paren[\bigg]{\frac{\tilde w_{K} f(X_{K})}{\E \tilde w_{K}}}
	\leq 4 C_w \norm{f}_\infty^2
	\,,
      \quad\text{provided}\quad T \geq T_0
      \,.
  \end{gather}
\end{proposition}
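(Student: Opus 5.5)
The plan is to write every expectation produced by Algorithm~\ref{a:ais2} as a nested composition of Markov operators and multiplication operators, and then to peel the nesting from the inside. Write $\tilde r_k = \tilde \pi_{k+1}/\tilde \pi_k = (Z_{k+1}/Z_k)\, r_k$. Iterating the tower property over the steps of the algorithm gives
\begin{equation*}
  \E\bigl[\tilde w_K f(X_K)\bigr]
    = \frac{Z_{K+1}}{Z_1}\, P_1^T\Bigl( r_1 P_2^T\bigl( r_2 \cdots P_K^T ( r_K f)\cdots\bigr)\Bigr)(X_0)
    \eqdef \frac{Z_{K+1}}{Z_1}\, P_1^T h_f(X_0),
\end{equation*}
where $h_f \defeq r_1 P_2^T( r_2 \cdots P_K^T(r_K f))$. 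Taking $f \equiv 1$ gives the same formula for $\E \tilde w_K$ with $h_1$ in place of $h_f$.

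The key identity is $\ip{h_f, \pi_1} = \ip{f, \pi_{K+1}}$. To see it, note that $\pi_k r_k = \pi_{k+1}$ and that $\pi_{k+1}$ is invariant for $P_{k+1}^T$ by Assumption~\ref{a:reversible}. Hence
\begin{equation*}
  \ip{r_k P_{k+1}^T g, \pi_k} = \ip{P_{k+1}^T g, \pi_{k+1}} = \ip{g, \pi_{k+1}},
\end{equation*}
and inducting over $k$ yields the identity. Since all operators involved are positive, the same argument gives $\norm{h_f}_{L^1(\pi_1)} \leq \ip{\abs{f}, \pi_{K+1}} \leq \norm{f}_\infty$.

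\textbf{Bias.} I use Assumption~\ref{a:L1Linf} through the bound
\begin{equation*}
  \abs{P_1^T h(x) - \ip{h,\pi_1}} \leq d_\infty(T)\, \norm{h}_{L^1(\pi_1)},
  \qquad
  d_\infty(t) \defeq \sup_x \norm{P_1^t(x,\cdot)/\pi_1 - 1}_\infty .
\end{equation*}
For reversible $P_1$, $d_\infty$ is submultiplicative. This follows by factoring $P_1^{s+t} - \Pi$ as $(P_1^s - \Pi)(P_1^t - \Pi)$, where $\Pi$ is the projection onto constants, and bounding the $L^1(\pi_1) \to L^\infty$ norm of the product. Consequently $d_\infty(T) \leq 2^{-\lfloor T/t^\infty_{\mix,1}\rfloor} \leq 2^{1 - T/t^\infty_{\mix,1}}$.

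This gives the numerator as $\ip{f,\pi_{K+1}} + e_1$ and the denominator as $1 + \eta$, both after dividing by $Z_{K+1}/Z_1$. Here $\abs{e_1} \leq 2^{1-T/t}\norm{f}_\infty$, $\abs{\eta} \leq 2^{1-T/t}$, and $\abs{\eta}\leq 1/2$ once $T \geq t^\infty_{\mix,1}$. Writing the ratio minus $\ip{f,\pi_{K+1}}$ as $(e_1 - \ip{f,\pi_{K+1}}\eta)/(1+\eta)$ gives a bound of the form $C\, 2^{-T/t}\norm{f}_\infty$. To get the constant $4$ in~\eqref{e:normalized-bias} I would combine the two errors before estimating. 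The combined error equals $P_1^T h - \ip{h, \pi_1}$ with $h = h_f - \ip{f,\pi_{K+1}} h_1 = h_{f - \ip{f,\pi_{K+1}}}$, which is controlled by $\norm{f - \ip{f, \pi_{K+1}}}_\infty$. The remaining factor is $1/(1+\eta) \leq 2$. The exact power of $2$ is bookkeeping.

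\textbf{Variance.} It suffices to bound the second moment $\E[\tilde w_K^2 f(X_K)^2]/(\E\tilde w_K)^2$. The same unrolling gives
\begin{equation*}
  \E \tilde w_K^2 f^2(X_K) \leq \norm{f}_\infty^2 \Bigl(\frac{Z_{K+1}}{Z_1}\Bigr)^2 P_1^T\Bigl(r_1^2 P_2^T\bigl(r_2^2\cdots P_K^T r_K^2\bigr)\Bigr)(X_0).
\end{equation*}
Working from the inside, $P_K^T r_K^2 \leq \norm{P_K^T r_K^2}_\infty$. By positivity, $r_{K-1}^2 P_K^T r_K^2 \leq \norm{P_K^T r_K^2}_\infty\, r_{K-1}^2$, and inducting bounds the whole expression by $\prod_k \norm{P_k^T r_k^2}_{L^\infty} \leq C_w$ via Assumption~\ref{a:r2}. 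Dividing by $(\E\tilde w_K)^2 = (Z_{K+1}/Z_1)^2(1+\eta)^2 \geq (Z_{K+1}/Z_1)^2/4$ gives the factor $4C_w$.

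\textbf{Main obstacle.} The step I expect to be delicate is this lower bound on $\E\tilde w_K$. It uses the mixing of $P_1$, so it needs $T \geq t^\infty_{\mix,1}$ and not only $T \geq T_0$. I would either assume $T_0 \geq t^\infty_{\mix,1}$, which is harmless since Assumption~\ref{a:r2} can always be stated with a larger $T_0$, or state both conditions. The second delicate point is verifying the submultiplicativity of $d_\infty$ cleanly, which I would do via the reversibility factorization above.
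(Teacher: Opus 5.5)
Your proposal is sound and is essentially the paper's argument. The paper likewise unrolls $\E[\tilde w_K f(X_K)]$ into nested operators $P_k^T(\tilde r_k\,\cdot\,)$ and identifies the target value $\frac{Z_{K+1}}{Z_1}\langle f,\pi_{K+1}\rangle$ by stationarity. It then uses the uniform mixing only at level one, against the $L^1(\pi_1)$-norm of the nested function (Lemma~\ref{l:bias_M0}), deduces $\E\tilde w_K\ge\frac12 Z_{K+1}/Z_1$, and bounds the second moment by peeling off $\|P_k^T\tilde r_k^2\|_\infty$ one level at a time. The only structural difference is in the bias: the paper bounds the numerator and denominator errors separately and combines them with the triangle inequality, whereas you center the test function first.

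That centering is the right move, but your stated bookkeeping does not yet give the constant in \eqref{e:normalized-bias}. You use the honest rate $d_\infty(T)\le 2^{-\lfloor T/t^\infty_{\mix,1}\rfloor}\le 2^{1-T/t^\infty_{\mix,1}}$. Note that this rate needs, besides submultiplicativity, the monotonicity $d_\infty(t+r)\le d_\infty(t)$ to handle the remainder; it follows from $P_1^{t+r}-\Pi=P_1^r(P_1^t-\Pi)$. With that rate, bounding the centered error by $\|f-\langle f,\pi_{K+1}\rangle\|_\infty\le 2\|f\|_\infty$ and dividing by $1+\eta\ge\frac12$ only gives $2^{3-T/t^\infty_{\mix,1}}\|f\|_\infty$.

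Use the $L^1$ bound you already proved instead. With $g=f-\langle f,\pi_{K+1}\rangle$ you have $\|h_g\|_{L^1(\pi_1)}\le\langle|g|,\pi_{K+1}\rangle\le\operatorname{Var}_{\pi_{K+1}}(f)^{1/2}\le\|f\|_\infty$. The bias is then at most $2\,d_\infty(T)\|f\|_\infty\le 2^{2-T/t^\infty_{\mix,1}}\|f\|_\infty$.

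The paper reaches $2^{2-T/t^\infty_{\mix,1}}$ through the lossier triangle-inequality route only because it takes $2^{-T/t^\infty_{\mix,1}}$ itself as the level-one mixing factor in \eqref{e:first-level-mixing}. Assumption~\ref{a:L1Linf} with submultiplicativity justifies that factor in general only when $T$ is an integer multiple of $t^\infty_{\mix,1}$.

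Your remark about the variance is also correct. The paper's proof of \eqref{e:variance-estimate} divides by the lower bound \eqref{e:wktilde-lb}, which is established only for $T\ge t^\infty_{\mix,1}$. So the hypothesis should read $T\ge\max\{T_0,t^\infty_{\mix,1}\}$; the later uses in Theorem~\ref{t:ais} and Proposition~\ref{p:langevinGen} do take $T$ this large.
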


In practice, one would take~$N$ independent samples from Algorithm~\ref{a:ais2}, and estimate~$\E \tilde w_K$ using the empirical mean.
An immediate consequence of Proposition~\ref{p:expectedDiff} is a quantitative bound on the convergence of the empirical measure.

\begin{theorem}[Empirical measure convergence]\label{t:ais}
  Suppose~$\set{P_k}_{1 \leq k \leq K}$ are Markov transition kernels satisfying Assumptions~\ref{a:reversible}--\ref{a:L1Linf}.
  Given~$\delta > 0$, choose
  \begin{equation}\label{e:NT-choice}
    N = \frac{64C_w}{\delta^2}
    \quad\text{and}\quad
    T = \max\set*{T_0, (4+\abs{\log_2 \delta})t_{\mix, 1}^\infty}\,.
\end{equation}
  Let~$\set{X^i_K, \tilde w^i_K}_{1 \leq i \leq N}$ be~$N$ independent realizations of the points and weights returned by Algorithm~\ref{a:ais2}, with~$K+1$ levels, running time~$T$, and kernels~$\set{P_k}$.
  For every bounded measurable $f$, we have
  \begin{equation}\label{e:ais-small-error-and-wki-def}
    \norm[\Big]{
      \sum_{i=1}^N w^i_K f(X^i_K) - \ip{f, \pi_{K+1}}
    }_{L^2(\P)}
    \leq \norm{f}_\infty \delta\,,
    \quad\text{where}\quad
    w^i_K \defeq \frac{\tilde w^i_K}{\sum_{j = 1} \tilde w^j_K}
    \,.
  \end{equation}
  Moreover, if~$\mu_{K+1, N}$ is the empirical measure defined by
  \begin{equation}
    \mu_{K+1, N} \defeq \sum_{i=1}^N w^i_K \delta_{X^i_K}\,,
  \end{equation}
  then for every~$s > d/2$
  \begin{equation}\label{e:empHs}
    \E\norm[\big]{\mu_{K+1, N} - \pi_{K+1}}_{H^{-s}}^2
      \leq C_s \delta^2
      \,.
  \end{equation}
\end{theorem}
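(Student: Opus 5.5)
Write $m \defeq \E \tilde w_K$ and normalize $W_i \defeq \tilde w^i_K / m$ and $F_i \defeq W_i f(X^i_K)$.
Then $\sum_i w^i_K f(X^i_K) = \bar F / \bar W$, where $\bar F = \frac1N \sum_i F_i$ and $\bar W = \frac1N \sum_i W_i$.
The plan is to treat this as a ratio estimator and feed in Proposition~\ref{p:expectedDiff}.
Applying the proposition with $f$ gives $\var(F_i) \le 4C_w\norm{f}_\infty^2$; applying it with $f \equiv 1$ gives $\var(W_i) \le 4C_w$ and $\E W_i = 1$.
The bias estimate gives $\abs{\E F_i - \ip{f,\pi_{K+1}}} \le 2^{2-T/t^\infty_{\mix,1}}\norm{f}_\infty$.
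With $T \ge (4+\abs{\log_2\delta})t^\infty_{\mix,1}$ this is at most $2^{-2-\abs{\log_2 \delta}} \norm{f}_\infty \le \delta\norm{f}_\infty/4$.

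Set $c \defeq \ip{f,\pi_{K+1}}$. We use the algebraic identity
\begin{equation*}
  \frac{\bar F}{\bar W} - c = \frac{\bar F - c\bar W}{\bar W}.
\end{equation*}
The denominator is the difficulty, because $\bar W$ can be small with no deterministic lower bound.
To get around this, first reduce to the case $\abs{f} \le \norm{f}_\infty$ and note that the self-normalized estimator is a convex combination of values of $f$.
Replacing $f$ by $f - c$ shows that $\abs{\bar F/\bar W - c} \le 2\norm{f}_\infty$ always.
Now split on the event $A = \set{\bar W \ge 1/2}$:
\begin{equation*}
  \E\abs[\Big]{\frac{\bar F}{\bar W} - c}^2 \le 4\,\E\abs{\bar F - c\bar W}^2 + 4\norm{f}_\infty^2\,\P(A^c).
\end{equation*}

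Chebyshev gives $\P(A^c) \le \P(\abs{\bar W - 1} \ge 1/2) \le 4\var(\bar W) \le 16 C_w/N$.
For the first term, write $G_i = F_i - cW_i$, so that $\bar F - c\bar W = \bar G$.
Then $\E\abs{\bar G}^2 = (\E G_1)^2 + \var(G_1)/N$.
Here $\E G_1 = \E F_1 - c$, which is the bias term already bounded above.
For the variance, apply Proposition~\ref{p:expectedDiff} to the test function $f - c$, whose sup norm is at most $2\norm{f}_\infty$; this gives $\var(G_1) \le 16C_w\norm{f}_\infty^2$.
Inserting $N = 64C_w/\delta^2$, the three contributions are $\delta^2/4$, $\delta^2/4$ and $\delta^2/4$, all multiplied by $\norm{f}_\infty^2$.

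The constants need tuning to land exactly at $\delta^2$.
Possible refinements are splitting at a threshold other than $1/2$, or applying the centering trick so that $\norm{f}_\infty$ effectively becomes $\norm{f}_\osc/2$.
This bookkeeping is the main fiddly step, but the structure is as above, and the tail event is where independence of the $N$ runs is used.

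For \eqref{e:empHs}, use the Fourier definition of the $H^{-s}$ norm on $\T^d$:
\begin{equation*}
  \norm{\mu - \pi}_{H^{-s}}^2 = \sum_{\xi} \abs{\xi}^{-2s}\,\abs{\ip{e_\xi,\mu - \pi}}^2,
\end{equation*}
where $e_\xi(x) = e^{2\pi i \xi\cdot x}$ (with the $\xi = 0$ term vanishing, or weighted by $(1+\abs{\xi}^2)^{-s}$ in the inhomogeneous case).
Taking expectations and applying the first estimate to the real and imaginary parts of $e_\xi$, which satisfy $\norm{\cdot}_\infty \le 1$, bounds each term by $2\delta^2$.
Summing, $\sum_\xi \abs{\xi}^{-2s} < \infty$ because $s > d/2$, which yields $C_s\delta^2$.
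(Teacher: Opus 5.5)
\emph{Verdict: there is a gap, but only in the constants.} Your reduction to Proposition~\ref{p:expectedDiff}, the bias computation and the $H^{-s}$ step are fine. The problem is that the main $L^2$ estimate does not close with the prescribed $N = 64C_w/\delta^2$, and the refinements you suggest do not repair it.

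\textbf{The count is off.} Your ``three contributions of $\delta^2/4$'' drops the factor $4$ that comes from splitting on $A=\{\bar W\ge 1/2\}$. With $N=64C_w/\delta^2$:
\begin{itemize}
\item the tail term is $4\norm{f}_\infty^2\cdot 16C_w/N=\delta^2\norm{f}_\infty^2$;
\item the variance term is $4\cdot 16C_w\norm{f}_\infty^2/N=\delta^2\norm{f}_\infty^2$;
\item the bias term is $4(\delta/4)^2\norm{f}_\infty^2=\delta^2\norm{f}_\infty^2/4$.
\end{itemize}
So you only get $\E\abs{\bar F/\bar W-c}^2\le \frac94\delta^2\norm{f}_\infty^2$, i.e.\ an $L^2$ error of $\frac32\delta\norm{f}_\infty$, not $\delta\norm{f}_\infty$.

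\textbf{The suggested fixes do not help.} With a threshold $\theta$ in place of $1/2$, the same bookkeeping gives the factor $\frac{5}{16\theta^2}+\frac{1}{4(1-\theta)^2}$, whose minimum over $\theta\in(0,1)$ still exceeds $2$. Centering $f$ does not help in the worst case either: for $f$ taking only the values $\pm1$, no shift reduces $\norm{f}_\infty$.

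\textbf{The loss is structural.} Dividing by $\bar W$ on $A$ costs a factor $\theta^{-2}$. The centered variable $G=F-cW$ has variance up to $16C_w\norm{f}_\infty^2$ rather than $4C_w\norm{f}_\infty^2$. The bad event adds a separate Chebyshev term. As written, your argument gives the stated bound only after enlarging the sample size, e.g.\ $N\ge 512C_w/(3\delta^2)$.

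\textbf{The missing idea.} The paper's argument never divides by $\bar W$:
\begin{equation*}
  \frac{\bar F}{\bar W}-c=(\bar F-c)+\frac{\bar F}{\bar W}\,(1-\bar W),
  \qquad
  \abs[\Big]{\frac{\bar F}{\bar W}}\le\norm{f}_\infty .
\end{equation*}
The bound $\abs{\bar F/\bar W}\le\norm{f}_\infty$ holds because the self-normalized estimator is a convex combination of values of $f$. Minkowski's inequality in $L^2(\P)$, independence of the runs, and $\E W_1=1$ then give
\begin{equation*}
  \norm[\Big]{\frac{\bar F}{\bar W}-c}_{L^2(\P)}
  \le \sqrt{\frac{\var(F_1)}{N}}+\abs{\E F_1-c}+\norm{f}_\infty\sqrt{\frac{\var(W_1)}{N}}
  \le \Bigl(\frac{\delta}{4}+\frac{\delta}{4}+\frac{\delta}{4}\Bigr)\norm{f}_\infty .
\end{equation*}
This needs no tail event, no threshold and no centering.
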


\begin{remark}[Time complexity]\label{r:timeComplexity}
  In order to obtain samples that satisfy~\eqref{e:ais-small-error-and-wki-def}, one has to simulate~$N$ realizations of~$T$ iterations of each of the chains~$P_1$, \dots, $P_K$, and so the time complexity is
  \begin{equation}\label{e:complexity}
    O(N T K) = \frac{64 K C_w t_{\mix, 1}^\infty}{\delta^2} \paren[\big]{4 + \abs{\log_2 \delta} }
  \end{equation}
\end{remark}

As mentioned earlier, when working with weighted samples it is important to estimate the \emph{effective sample size} (see~\eqref{e:ess}) and ensure that the weights don't concentrate on a few points.
This can be done quickly from the variance bound~\eqref{e:variance-estimate}.

\begin{proposition}[Effective sample size]\label{p:essBound}
  Let~$w^1_K$, \dots, $w^N_K$ be the normalized weights defined in~\eqref{e:ais-small-error-and-wki-def}.
  Then the sum of the squared weights satisfies
  \begin{equation}\label{e:ais-ssw-ub}
    \E \sum_{i = 1}^N (w^i_K)^2 
      \leq \frac{8\paren*{4C_w +1}}{N}
    \,.
  \end{equation}
  Consequently, the expected effective sample size is bounded by
  \begin{equation}\label{e:ais-ess-lb}
    \E \ESS(w^1_K, \dots, w^N_K)
      \geq \frac{N}{8\paren*{4C_w +1}}
    \,.
  \end{equation}
\end{proposition}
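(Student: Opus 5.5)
We prove Proposition~\ref{p:essBound} by reducing it to the variance bound~\eqref{e:variance-estimate} with~$f \equiv 1$. Set~$m \defeq \E \tilde w_K$ and~$v_i \defeq \tilde w^i_K / m$, so that the~$v_i$ are i.i.d., nonnegative, have mean~$1$, and satisfy~$\E v_i^2 \leq 1 + 4C_w$ by~\eqref{e:variance-estimate}. (Here we assume~$T \geq T_0$, as in Theorem~\ref{t:ais}.) Writing~$\bar V \defeq \frac{1}{N}\sum_j v_j$, we have
\begin{equation}
  \sum_{i=1}^N (w^i_K)^2 = \frac{1}{N^2}\,\frac{\sum_i v_i^2}{\bar V^2}\,,
\end{equation}
and it suffices to bound the expectation of this quantity by~$8(4C_w+1)/N$.

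The key step is to split on the event~$A \defeq \set{\bar V \geq 1/2}$.

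On~$A$ we bound the quantity by~$\frac{4}{N^2}\sum_i v_i^2$. Its expectation is at most~$4(4C_w+1)/N$.

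On~$A^c$ we use the trivial bound~$\sum_i (w^i_K)^2 \leq 1$, valid because the~$w^i_K$ form a probability vector. Chebyshev's inequality gives
\begin{equation}
  \P(A^c) \leq \P\paren[\big]{\abs{\bar V - 1} > 1/2} \leq \frac{4\var(v_1)}{N} \leq \frac{16 C_w}{N}\,.
\end{equation}
Adding the two contributions yields a total of~$(4(4C_w+1) + 16C_w)/N \leq 8(4C_w+1)/N$, which is~\eqref{e:ais-ssw-ub}.

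For~\eqref{e:ais-ess-lb}, observe that~$x \mapsto 1/x$ is convex on~$(0,\infty)$. Jensen's inequality then gives~$\E \ESS = \E\paren[\big]{1/\sum_i (w^i_K)^2} \geq 1/\E\sum_i (w^i_K)^2$, and~\eqref{e:ais-ssw-ub} gives the claimed lower bound.

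The only mild obstacle is the degenerate event where~$\bar V$ is small, which could make the ratio large. The trivial bound~$\sum_i w_i^2 \leq 1$ together with Chebyshev's inequality handles it cleanly. One should also note that if all weights vanish the normalized weights are undefined; since~$\P(\bar V = 0) \leq \P(A^c)$, this event is already covered, or can be excluded by convention.
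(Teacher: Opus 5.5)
Your proof is correct and follows the paper's argument: the paper also splits on the event that the empirical mean of the weights is at least half its expectation, controls that event by $4\,\E \tilde w_K^2/(N(\E\tilde w_K)^2)$, uses $\sum_i (w^i_K)^2 \le 1$ together with Chebyshev on the complement, and then applies Jensen. The only difference is cosmetic: you normalize the weights by $\E \tilde w_K$ at the outset, whereas the paper first proves the bound for general nonnegative i.i.d.\ variables and then substitutes $\tilde\sigma^2 \le 4 C_w \tilde\mu^2$.
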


Before delving into the proofs, we now briefly discuss the assumptions and implications of Proposition~\ref{t:ais}.

\subsection{Relation to \texorpdfstring{$\chi^2$}{chi2}-divergence}\label{s:chi2}
To understand the assumptions better, suppose momentarily we were able to choose~$T = \infty$ in~\eqref{e:Ptrk2-ub}.
In this case, the left hand side of~\eqref{e:Ptrk2-ub} better can be bounded in terms of the~$\chi^2$-divergence between each of the intermediate distributions.

\begin{proposition}\label{p:aisChi2}
  If~$T = \infty$ then the left hand side of~\eqref{e:Ptrk2-ub} satisfies
  \begin{equation}\label{e:prodPkInfChi2}
    \prod_{k = 1}^K \norm{P_{k}^\infty r_{k}^2}_{L^\infty}
      \leq \exp\paren[\bigg]{
	  \sum_{k = 1}^K \chi^2(\pi_{k+1}; \pi_{k})
	}
  \end{equation}
\end{proposition}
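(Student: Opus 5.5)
Interpret $P_k^\infty$ as the limiting kernel $\lim_{T\to\infty}P_k^T$. By Assumption~\ref{a:reversible} (together with ergodicity, e.g.\ finite mixing time) this limit is the projection onto constants: $P_k^\infty g(x) = \ip{g, \pi_k}$ for every $x$. The function $P_k^\infty r_k^2$ is therefore constant, and its $L^\infty$ norm equals that constant.

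The main step is to compute this constant exactly:
\begin{equation*}
  \norm{P_k^\infty r_k^2}_{L^\infty}
    = \int_{\mathcal X} \paren[\Big]{\frac{\pi_{k+1}}{\pi_k}}^2 \pi_k
    = \int_{\mathcal X} \frac{\pi_{k+1}^2}{\pi_k}
    = 1 + \chi^2(\pi_{k+1}; \pi_k)\,.
\end{equation*}
This uses the standard identity $\chi^2(\mu;\nu) = \int (d\mu/d\nu - 1)^2\, d\nu = \int (d\mu/d\nu)^2\, d\nu - 1$, valid because $\int (d\mu/d\nu)\, d\nu = 1$.

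Finally, apply the elementary inequality $1 + x \leq e^x$ to each factor, which is legitimate since every factor is nonnegative. Multiplying over $k = 1, \dots, K$ gives
\begin{equation*}
  \prod_{k=1}^K \paren[\big]{1 + \chi^2(\pi_{k+1}; \pi_k)}
    \leq \exp\paren[\Big]{\sum_{k=1}^K \chi^2(\pi_{k+1}; \pi_k)}\,,
\end{equation*}
which is~\eqref{e:prodPkInfChi2}.

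The only potential obstacle is justifying that $P_k^\infty$ is the projection onto constants. If one prefers not to assume this directly, I would take a limit as $T \to \infty$ in $\norm{P_k^T r_k^2}_{L^\infty}$. Reversibility and ergodicity give $P_k^T g \to \ip{g, \pi_k}$, and this convergence is uniform on a compact space such as $\T^d$ once the uniform mixing time is finite. If $\chi^2(\pi_{k+1};\pi_k) = \infty$, the inequality holds trivially.
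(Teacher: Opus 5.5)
Your proposal is correct and is the paper's argument: the paper also uses $P_k^\infty r_k^2 = \langle r_k^2, \pi_k\rangle = 1 + \chi^2(\pi_{k+1};\pi_k)$, and its bound $\ln \langle r_k^2,\pi_k\rangle \leq \langle r_k^2,\pi_k\rangle - 1$ is your inequality $1+x \leq e^x$ in logarithmic form. Your remarks justifying that $P_k^\infty$ is the projection onto constants go beyond the paper, which simply asserts this identity; note, though, that Assumption~\ref{a:L1Linf} gives a finite uniform mixing time only for $P_1$, so the $T \to \infty$ limiting argument for general $k$ would need ergodicity of each $P_k$ as an extra hypothesis.
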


Recall the~$\chi^2$-divergence appearing above is a commonly used measure of the difference between two distributions.
Explicitly, for two distributions with densities~$p, q$ the~$\chi^2$-divergence is defined by
\begin{equation}
  \chi^2(p; q) \defeq \ip[\Big]{ \frac{p^2}{q^2} - 1, q}
  \,.
\end{equation}

Let us now momentarily assume that~$\pi_k$'s are all Gaussian, with~$\pi_1 \sim \mathcal N(0, I_d)$ and~$\pi_{K+1} \sim \mathcal N(0, \epsilon I_d)$.
If we choose~$K = 1$ then AIS reduces to a vanilla importance sampling, and one can explicitly compute (see for instance Proposition~17.1 in~\cite{ChopinPapaspiliopoulos20}) that the right hand side of~\eqref{e:prodPkInfChi2} is~$O(1 / \epsilon^{d/2})$.
This is comparable with a standard rejection sampling cost, and is too large to be practical.

However, if we instead choose~$K = d/\epsilon$, then the right hand side of~\eqref{e:prodPkInfChi2} can be bounded \emph{independent} of both~$\epsilon$ and~$d$.
Thus, using AIS here with~$O(d/\epsilon)$ intermediate levels gives samples with variance~$O(1)$, which is a huge improvement.

\begin{proposition}\label{p:aisGaussian}
  If the target distribution~$\pi$ is the Gaussian~$\mathcal N(0, \epsilon I_d)$, then choosing
  \begin{equation}\label{e:GaussianKChoice}
    K = \ceil[\bigg]{\frac{d}{\epsilon}}
    \,,
  \end{equation}
  choosing the temperatures~$\set{\epsilon_k}$ so that~$\epsilon_{K+1} = \epsilon$ and~$\set{1/\epsilon_k}_{1 \leq k \leq K}$ are linearly spaced, and choosing~$\pi_k$ to be the Gaussian~$\mathcal N(0, \epsilon_k I_d)$ will ensure
  \begin{equation}\label{e:prodPkInfGaussian}
    \prod_{k = 1}^K \norm{P_{k}^\infty r_{k}^2}_{L^\infty}
      \leq C
      \,.
  \end{equation}
  In this case, Proposition~\ref{p:expectedDiff} with~$T = \infty$ will yield a variance bound that is \emph{independent} of both~$\epsilon$ and~$d$.
\end{proposition}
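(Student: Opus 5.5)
Since $T = \infty$, we have $P_k^\infty f = \ip{f, \pi_k}$ for each $k$. Hence
\begin{equation*}
  \norm{P_k^\infty r_k^2}_{L^\infty} = \int \frac{\pi_{k+1}^2}{\pi_k}\,dx = 1 + \chi^2(\pi_{k+1};\pi_k)\,,
\end{equation*}
and the problem reduces to bounding the product $\prod_k \paren{1 + \chi^2(\pi_{k+1};\pi_k)}$, or, via Proposition~\ref{p:aisChi2} and $1+x \leq e^x$, bounding $\sum_k \chi^2(\pi_{k+1};\pi_k)$. The plan is to compute each $\chi^2$ term explicitly for Gaussians and then sum over the schedule.

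Write $\beta_k = 1/\epsilon_k$. Then $\pi_k \propto e^{-\beta_k \abs{x}^2/2}$ with normalization $Z_k = (2\pi/\beta_k)^{d/2}$. We have
\begin{equation*}
  \int \frac{\pi_{k+1}^2}{\pi_k} = \frac{Z_k}{Z_{k+1}^2}\int e^{-(2\beta_{k+1}-\beta_k)\abs{x}^2/2}\,dx\,,
\end{equation*}
which is finite since $2\beta_{k+1} - \beta_k > 0$. Evaluating the Gaussian integral gives
\begin{equation*}
  1+\chi^2(\pi_{k+1};\pi_k) = \paren[\Big]{\frac{\beta_{k+1}^2}{\beta_k(2\beta_{k+1}-\beta_k)}}^{d/2} = \paren[\Big]{1 + \frac{h^2}{\beta_k(\beta_k+2h)}}^{d/2}\,,
\end{equation*}
where $h = \beta_{k+1} - \beta_k$ is the constant spacing. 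This uses $\beta_{k+1}^2 - \beta_k(2\beta_{k+1} - \beta_k) = h^2$.

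Taking logarithms and using $\log(1+x) \leq x$, the log of the product is at most $\frac{d}{2}\sum_k \frac{h^2}{\beta_k^2}$. Now $\beta_1 = 1/\epsilon_1$ and $\beta_{K+1} = 1/\epsilon$, so $h = (1/\epsilon - \beta_1)/K \leq \epsilon/\epsilon \cdot d^{-1} \cdot \ldots$; more precisely $K \geq d/\epsilon$ gives $h \leq 1/d$.

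The key step is controlling $\sum_k h^2/\beta_k^2$. With $\beta_k = \beta_1 + (k-1)h$ and $\beta_1 = 1/\epsilon_1$ fixed (take, say, $\epsilon_1 = 1$ as the starting temperature), compare the sum with an integral:
\begin{equation*}
  \sum_k \frac{h^2}{\beta_k^2} \leq \frac{h^2}{\beta_1^2} + h\int_{\beta_1}^\infty \frac{d\beta}{\beta^2} = \frac{h^2}{\beta_1^2} + \frac{h}{\beta_1}\,.
\end{equation*}
Multiplying by $d/2$ and using $h \leq 1/d$ yields a bound of at most $\frac{1}{2\beta_1} + \frac{1}{2d\beta_1^2}$. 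This is an absolute constant once $\epsilon_1$ is fixed, independent of $\epsilon$ and $d$, so the product is at most some $C$, which proves~\eqref{e:prodPkInfGaussian}.

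The main subtlety is exactly this use of $h \leq 1/d$ coming from the choice~\eqref{e:GaussianKChoice}: the factor $d$ is cancelled by $h$, and $\sum_k h/\beta_k^2$ is a convergent Riemann sum. The variance claim then follows by substituting $C_w = C$ into~\eqref{e:variance-estimate} of Proposition~\ref{p:expectedDiff}.
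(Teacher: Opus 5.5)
Your proof is correct and follows essentially the same route as the paper: you compute the Gaussian $\chi^2$-divergence explicitly, then sum over the linearly spaced schedule using $K \geq d/\epsilon$ to cancel the factor of $d$. Your one variation is taking logarithms of $\bigl(1 + h^2/(\beta_k(\beta_k+2h))\bigr)^{d/2}$ directly, rather than first linearizing $\chi^2 \leq C d \delta_k^2$ as the paper does; this is slightly cleaner and gives an explicit constant, e.g.\ $C = e$ when $\epsilon_1 = 1$.
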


The proof of Proposition~\ref{p:aisGaussian} is elementary, and presented in Section~\ref{s:chi2Proof}, below.

\section{Convergence results for Langevin AIS}\label{s:aisConv}

\subsection{Assumptions}
In order to state the assumptions in Theorem~\ref{t:langevin} precisely, we first describe the assumptions that are required on the potential~$U$.
In short, we need the potential to be a regular, double-well function with wells of \emph{nearly equal} depth.
The criterion that the wells have nearly equal depth is required for the multimodal sampling problem to be non-degenerate in the sense that the mass in each well remains bounded away from~$0$.
These assumptions are the same as the assumptions in~\cite{HanIyerEA26}, which are explained in detail in~\cite[Section 4.1]{HanIyerEA26}.
We quote them here for easy reference, and refer the reader to~\cite{HanIyerEA26} for a detailed explanation and motivation.

\begin{assumption}\label{a:criticalpts}
  The function~$U\in C^{6 \varmax (1 + \floor{d/2})}(\mathbb T^d,\mathbb R)$, has a nondegenerate Hessian at all critical points, and has exactly two local minima located at~$x_{\min, 1}$ and~$x_{\min, 2}$.
  We normalize~$U$ so that
\begin{equation}\label{e:Upositive}
  0 = U(x_{\min, 1} )  \leq U( x_{\min, 2} )
  .
\end{equation}
\end{assumption}

Define the saddle height between~$x_{\min,1}$ and $x_{\min,2}$ to be the minimum amount of energy needed to go from the global minimum~$x_{\min, 1}$ to~$x_{\min, 2}$.
Explicitly, the saddle height is
\begin{equation}\label{e:UHatDef}
    \hat{U} = \hat{U}(x_{\min,1},x_{\min,2}) \defeq \inf_\omega \sup\limits_{t\in [0,1]}U(\omega(t))
    .
\end{equation}
Here the infimum above is taken over all continuous paths~$\omega \in C( [0, 1]; \mathbb T^d)$ such that~$\omega(0) =x_{\min,1}$, $\omega(1) = x_{\min,2}$.

\begin{assumption}\label{assumption: nondegeneracy}
   The saddle height between $x_{\min,1}$ and $x_{\min,2}$ is attained at a unique critical point $s_{1,2}$ of index one.
   That is, 
   the first eigenvalue of $\Hess U(s_{1,2})$ is negative and the others are positive.
\end{assumption}
The \emph{energy barrier}, denoted by~$\hat{\gamma}$, is defined to be the minimum amount of energy needed to go from the (possibly local) minimum~$x_{\min, 2}$ to the global minimum~$x_{\min, 1}$.
In terms of~$s_{1, 2}$, the energy barrier~$\hat \gamma$ and the saddle height are given by
\begin{equation}\label{e:gammaHatDef}
    \hat{\gamma}\defeq U(s_{1,2})-U(x_{\min,2}),
    \quad\text{and}\quad
    \hat U = U(s_{1,2})\,.
\end{equation}
The ratio $\hat{\gamma}_r$ is the ratio of the saddle height~$\hat U$ to the energy barrier~$\hat \gamma$, given by
\begin{equation}\label{e:gammaHatRDef}
    \hat \gamma_r\defeq \frac{\hat{U}}{\hat{\gamma}}\,.
\end{equation}
We recall the basin of attraction around~$x_{\min, i}$, denoted by~$\Omega_i$, is defined by
\begin{equation}
  \Omega_i
    \defeq
    \set[\Big]{
      y\in\mathbb T^d \st
      \lim_{t\to\infty}y_t=x_{\min,i}, \text{ where }\dot{y}_t=-\nabla U(y_t)
	\text{ with } y_0=y
      }\,.
\end{equation}

\begin{assumption}\label{a:massRatioBound}
  There exists~$0 \leq \epsilon_{\min} < \epsilon_{\max} \leq \infty$, a constant~$C_m$ such that
   \begin{equation}\label{e:massRatioBound}
     \inf_{\substack{
       \epsilon\in [\epsilon_{\min},\epsilon_{\max}]\\
       0 < \epsilon < \infty
     }}
      \pi_{\epsilon}(\Omega_i)
	\geq \frac{1}{C_m^2}
	.
   \end{equation}
\end{assumption}

\begin{remark}
  For simplicity, we have assumed that~$U$ has only two wells.
  If~$U$ has more than two wells, the techniques we use will still apply provided we impose a non-degeneracy condition on~$U$.
  The precise assumption is stated in Section 10 in~\cite{HanIyerEA26}, and the required modifications to the proof are straightforward.
\end{remark}

\subsection{Convergence results for Langevin AIS}

In this section we precisely state the main result of this paper (Theorem~\ref{t:langevinGen}, below, which is a generalization of Theorem~\ref{t:langevin}).
We begin by stating bias and variance bounds for Algorithm~\ref{a:aisLMC}.

\begin{proposition}[Bias and variance bounds]\label{p:langevinGen}
  Let~$U$ be a double well potential that satisfies Assumptions~\ref{a:criticalpts}--\ref{a:massRatioBound}, and let~$\epsilon_1 \in (\epsilon_{\min}, \epsilon_{\max}]$.
  Let~$\nu > 0$ be a fixed constant.
  There exists constants~$\bar C_w = \bar C_w(U, \nu)$, and~$C_T = C_T(U)$ such that the following holds.
  For any~$\epsilon \in (\epsilon_{\min}, \epsilon_{1}]$ choose
  \begin{equation}\label{e:KTchoice}
    K = \ceil[\bigg]{ \frac{1}{\epsilon \nu} }
    \quad\text{and}\quad
    T > \max\set[\Big]{
      t_{\mix, \epsilon_1}^\infty,
      C_T \paren[\Big]{\frac{1}{\epsilon} + \log K}
    }
    \,,
  \end{equation}
  and choose~$\epsilon_2$, \dots, $\epsilon_{K+1} = \epsilon$ so that~$\set{1/\epsilon_k}_{1 \leq k \leq K+1}$ are linearly spaced.
  Run Algorithm~\ref{a:aisLMC} with this choice of parameters and obtain the point~$X_K$, and unnormalized weight~$\tilde w_K$.
  Then, for all bounded measurable test functions~$f$, we have the bias and variance estimates
  \begin{align}\label{e:normalized-bias-langevin}
    \abs[\bigg]{\frac{\E \tilde w_{K} f(X_{K})}{\E \tilde w_{K}} - \ip{f, \pi_{\epsilon}}}
      &\leq 2^{2- T / t_{\mix, 1}^\infty} \norm{f}_\infty
    \,,
    \\
    \label{e:variance-estimate-langevin}
	\var\paren[\bigg]{\frac{\tilde w_{K} f(X_{K})}{\E \tilde w_{K}}}
	  &\leq 4 \bar C_w \norm{f}_\infty^2
	  \,.
  \end{align}
\end{proposition}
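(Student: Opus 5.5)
Proposition~\ref{p:langevinGen} follows from Proposition~\ref{p:expectedDiff}, applied with $P_k$ the time-$1$ transition kernel of~\eqref{e:langevin} at temperature $\epsilon_k$ (so that $P_k^T$ is the time-$T$ kernel), $\tilde\pi_k = \tilde\pi_{\epsilon_k}$, and $\pi_{K+1} = \pi_\epsilon$. The work is in verifying Assumptions~\ref{a:reversible}--\ref{a:L1Linf}, with constants depending only on $U$ and $\nu$.

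Assumption~\ref{a:reversible} holds because the generator $L_k = \epsilon_k\Delta - \nabla U\cdot\nabla$ is self-adjoint in $L^2(\pi_{\epsilon_k})$. Assumption~\ref{a:L1Linf} holds because $\T^d$ is compact and the heat kernel at fixed temperature $\epsilon_1$ is smooth and strictly positive. This gives $t^\infty_{\mix,1} = t^\infty_{\mix,\epsilon_1}<\infty$, and~\eqref{e:normalized-bias-langevin} then follows directly from~\eqref{e:normalized-bias}, since $T \geq t_{\mix,\epsilon_1}^\infty$.

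The main step is Assumption~\ref{a:r2}: we must show $\prod_k \norm{P_k^T r_k^2}_{L^\infty} \le \bar C_w$ once $T \ge C_T(1/\epsilon + \log K)$. Write $\beta_k = 1/\epsilon_k$ with spacing $h = \beta_{k+1}-\beta_k \le \nu$ (up to the ceiling, since $K = \lceil 1/(\epsilon\nu)\rceil$). Then
\[
  r_k = \frac{Z_{k}}{Z_{k+1}}\,e^{-hU}.
\]
We decompose $r_k^2 = \ip{r_k^2,\pi_k} + g_k$ with $\ip{g_k,\pi_k}=0$, so that
\[
  \norm{P_k^T r_k^2}_\infty \le \ip{r_k^2,\pi_k} + \norm{P_k^T g_k}_\infty .
\]

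\emph{The mean term.} The quantity $\ip{r_k^2,\pi_k} = 1+\chi^2(\pi_{k+1};\pi_k)$ equals $Z_kZ_{k+2}'/Z_{k+1}^2$, where $Z'$ denotes the partition function at $\beta_k+2h$. Log-convexity of $\beta\mapsto\log Z$ gives
\[
  \log\ip{r_k^2,\pi_k} \approx h^2\,\var_{\pi_{\beta}}(U).
\]
Laplace asymptotics on the wells, together with Assumption~\ref{a:massRatioBound} to handle the two-well mass split, give $\var_{\pi_\beta}(U) \le C(d/\beta^2 + \text{small})$. Since $\beta_k \approx \beta_1 + kh$, the sum $\sum_k h^2/\beta_k^2$ is bounded by $\sum_k h/(\beta_1/h + k)^2 \lesssim \nu$ uniformly in $K$. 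Hence $\prod_k \ip{r_k^2,\pi_k} \le e^{C(U,\nu)}$, in the spirit of Proposition~\ref{p:aisChi2}.

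\emph{The fluctuation term.} Here we cannot use the spectral gap, which is exponentially small by the Arrhenius law. Instead we use the spectral structure quoted from~\cite{HanIyerEA26}. The generator $L_k$ has one small eigenvalue $\lambda_2\sim e^{-\hat\gamma/\epsilon_k}$ with eigenfunction $\psi_k$ close to a function constant on each basin $\Omega_i$. The remaining spectrum lies beyond a gap of order $1$ (the local gap in the wells). We therefore split
\[
  g_k = c_k\psi_k + g_k^\perp .
\]

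The $g_k^\perp$ part decays like $e^{-cT}$ in $L^2(\pi_k)$. To pass to $L^\infty$ we use an $L^2\to L^\infty$ smoothing estimate for $P^1_k$, which costs a factor polynomial or $e^{C/\epsilon_k}$ in $1/\epsilon_k$; this is absorbed by $T \ge C_T/\epsilon$. Because $\norm{r_k^2}_\infty$ is bounded by $C$ (as $h\le\nu$ and the partition-function ratios are controlled), this part is at most $e^{-cT+C/\epsilon} \le 1/K^2$ once $T\ge C_T(1/\epsilon+\log K)$, giving a summable contribution.

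For the slow part, we bound the coefficient $|c_k| = |\ip{r_k^2,\psi_k\pi_k}|$. Because $r_k^2 = \text{const}\cdot e^{-2hU}$, its $\pi_k$-averages over $\Omega_1$ and $\Omega_2$ are each governed by the equal-depth wells. Their difference is therefore $O(h^2/\beta_k^2 + h\cdot\text{(depth mismatch)})$, again summable with total $O(\nu)$. Multiplying over $k$ via $\prod_k(1+a_k)\le e^{\sum_k a_k}$ then gives $\bar C_w$.

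Once Assumption~\ref{a:r2} holds with $T_0 = C_T(1/\epsilon+\log K)$, estimate~\eqref{e:variance-estimate} gives~\eqref{e:variance-estimate-langevin}.

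The main obstacle is the slow-mode coefficient $c_k$. Controlling it requires quantitative well-by-well Laplace expansions uniform in $\epsilon_k \in [\epsilon,\epsilon_1]$, and precise control of $\psi_k$ near the saddle, as supplied by~\cite{HanIyerEA26}. I would first attempt the bound $|c_k|\le C h^2/\beta_k^2$ and check that it sums to a quantity bounded in terms of $\nu$.
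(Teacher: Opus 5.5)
Your architecture matches the paper's. You reduce to Proposition~\ref{p:expectedDiff} and split $P_k^T r_k^2$ into the mean, the slow mode $\psi_{2,k}$, and the modes $i\ge 3$. Your $L^2\to L^\infty$ smoothing bound, with cost $e^{C/\epsilon_k}$, is a reasonable substitute for Lemma~\ref{l:psi-Linfty-weighted-sum}, provided you prove it with that explicit $\epsilon$-dependence. The genuine gap is the slow-mode step: you leave it open, and the route you sketch is not what closes it. The missing observation is that linear spacing gives $2/\epsilon_{k+1}-1/\epsilon_k=1/\epsilon_{k+2}$. Hence $r_k^2\pi_k=\frac{Z_kZ_{k+2}}{Z_{k+1}^2}\,\pi_{k+2}$, and so $c_k=\frac{Z_kZ_{k+2}}{Z_{k+1}^2}\ip{\psi_{2,k},\pi_{k+2}}$: the slow-mode coefficient is the projection of a nearby Gibbs measure onto $\psi_{2,k}$. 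Property~\ref{p:var} bounds exactly this quantity by $C_\gamma\bigl(e^{-\gamma/\epsilon_k}+|\pi_{k+2}(\Omega_1)-\pi_k(\Omega_1)|\bigr)$. You also need $\|\psi_{2,k}\|_\infty\le C_{\psi_2}$ (Property~\ref{property:eigenfunctions}). With these, the slow-mode contributions sum to a geometric series in $1/\epsilon_k$ plus twice the total variation of $\epsilon\mapsto\pi_\epsilon(\Omega_1)$, which is finite by Lemma~8.2 of~\cite{HanIyerEA26}. The same identity makes the mean term telescope exactly, $\prod_k\ip{r_k^2,\pi_k}=\frac{Z_1Z_{K+2}}{Z_2Z_{K+1}}\le Z_1/Z_2\le e^{\nu\|U\|_\infty}$. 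So no variance asymptotics are needed. Moreover, your ``small'' term in $\var_{\pi_\beta}(U)$ is of order one when the wells have unequal depth, which the proposition allows.

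Your per-step Laplace plan would not close the gap as written. Up to the factor $(\pi_k(\Omega_1)\pi_k(\Omega_2))^{-1/2}$, the step-function part of $c_k$ is the mass drift $\pi_{k+2}(\Omega_1)-\pi_k(\Omega_1)$. For equal depths, the leading Gaussian mass split is independent of $\beta$, but the $1/\beta$ Laplace correction is not. So generically $|c_k|\sim h/\beta_k^2$, not the $h^2/\beta_k^2$ you propose to try. This still sums, since $\sum_k h/\beta_k^2\lesssim 1+\int_1^\infty\beta^{-2}\,d\beta$, but it shows the mechanism is first-order mass drift, not a second-order cancellation. For unequal depths the drift is about $2h\Delta\,\pi(\Omega_1)\pi(\Omega_2)$ per step, with $\Delta=U(x_{\min,2})$. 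These terms total about $\Delta/\epsilon$, not $O(\nu)$, and are bounded only through Assumption~\ref{a:massRatioBound}; bounded variation handles every case at once. Most importantly, replacing $\psi_{2,k}$ by a basin step function must cost an error exponentially small in $1/\epsilon_k$, so that it is summable over the $K\sim 1/(\epsilon\nu)$ levels. Your proposal does not supply this. It is precisely the $e^{-\gamma/\epsilon}$ term in~\eqref{e:PsiEpPi}, and it is the real content of the estimate you defer to~\cite{HanIyerEA26}.
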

\begin{remark}
  We will shortly see that~$t_{\mix, \epsilon}^\infty < \infty$ for every~$\epsilon > 0$, but grows exponentially with~$1/\epsilon$ as~$\epsilon \to 0$.
  Choosing~$T > t_{\mix, \epsilon}^\infty$ is of course impractical when~$\epsilon$ is small, however, the choice of~$T$ in~\eqref{e:KTchoice} only requires~$T > t_{\mix, \epsilon_1}^\infty$, which is practically tractable when~$\epsilon_1$ is large.
\end{remark}

Proposition~\ref{p:langevinGen} allows us to bound the error when we perform repeated independent runs of Algorithm~\ref{a:aisLMC}.
This is the main result of this paper.

\begin{theorem}\label{t:langevinGen}
  Let~$U$ be a double well potential that satisfies Assumptions~\ref{a:criticalpts}--\ref{a:massRatioBound}, and let~$\epsilon_1 \in (\epsilon_{\min}, \epsilon_{\max}]$.
  Given~$\nu > 0$, let~$\bar C_w$, $C_T$ be as in Proposition~\ref{p:langevinGen}.
  For any~$\delta > 0$ and~$\epsilon \in (\epsilon_{\min}, \epsilon_1]$ choose
  \begin{align}
    \label{e:K-choice-emp}
    K &= \ceil[\bigg]{ \frac{1}{\epsilon \nu} }
    \,,
    \\
    \label{e:T-choice-emp}
    T &> \max\set[\Big]{
      C_T \paren[\Big]{\frac{1}{\epsilon} + \log K},
      (4+\abs{\log_2 \delta})t_{\mix, \epsilon_1}^\infty
    }\,,
    \\
    \label{e:N-choice-emp}
    N &= \ceil[\bigg]{\frac{64 \bar C_w}{\delta^2}}
    \,,
  \end{align}
  and choose~$\epsilon_2$, \dots, $\epsilon_{K+1} = \epsilon$ so that~$\set{1/\epsilon_k}_{1 \leq k \leq K+1}$ are linearly spaced.
  Perform~$N$ independent runs of Algorithm~\ref{a:aisLMC} with these parameters and let~$(X^1_K, \tilde w^1_K)$, \dots, $(X^N_K, \tilde w^N_K)$ be the resulting points and unnormalized weights.
  Define the empirical measure~$\mu_N$ by~\eqref{e:muN-def}.
  Then for every bounded test function~$f$ we have~\eqref{e:emp-error-bd}.
  Consequently, for every~$s > d/2$ there exists an explicit dimensional constant~$C_s$ (independent of~$\epsilon_1, \epsilon, \nu, \delta$) such that~\eqref{e:emp-HsBd} holds.
\end{theorem}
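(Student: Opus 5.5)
The plan is to deduce Theorem~\ref{t:langevinGen} directly from the general result Theorem~\ref{t:ais}, by checking that the Langevin tempering sequence satisfies Assumptions~\ref{a:reversible}--\ref{a:L1Linf} with constants independent of~$\epsilon$ and~$\delta$. We take~$\pi_k = \pi_{\epsilon_k}$ and let~$P_k$ be the time-one transition kernel of~\eqref{e:langevin} at temperature~$\epsilon_k$, so that~$P_k^T$ is the law after time~$T$.

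\textbf{Step 1: Assumptions~\ref{a:reversible} and~\ref{a:L1Linf}.} The overdamped Langevin semigroup is reversible with respect to~$\pi_{\epsilon_k}$, which gives Assumption~\ref{a:reversible}. On the compact torus with smooth~$U$ the generator has a spectral gap and the heat kernel is smooth and positive, so~$t^\infty_{\mix,\epsilon_1} < \infty$; this gives Assumption~\ref{a:L1Linf}.

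\textbf{Step 2: Assumption~\ref{a:r2}.} This is the substantive input, and it is exactly what underlies the variance bound~\eqref{e:variance-estimate-langevin} in Proposition~\ref{p:langevinGen}. I will take from that proposition (or its proof) the statement that, for the choices~\eqref{e:KTchoice} and any~$T > C_T(1/\epsilon + \log K)$,
\begin{equation*}
  \prod_{k=1}^K \norm{P_k^T r_k^2}_{L^\infty} \leq \bar C_w .
\end{equation*}
So Assumption~\ref{a:r2} holds with~$C_w = \bar C_w$ and~$T_0 = C_T(1/\epsilon + \log K)$. If only the variance conclusion is available, I would instead mirror the proof of Theorem~\ref{t:ais}, using the bias bound~\eqref{e:normalized-bias-langevin} and the variance bound~\eqref{e:variance-estimate-langevin} directly.

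\textbf{Step 3: invoking Theorem~\ref{t:ais}.} With~$N$ as in~\eqref{e:N-choice-emp} and~$T$ as in~\eqref{e:T-choice-emp}, we have~$T \geq \max\{T_0, (4+\abs{\log_2\delta})t^\infty_{\mix,\epsilon_1}\}$. Theorem~\ref{t:ais} is stated with the exact choices~\eqref{e:NT-choice}, so I need to confirm its proof is monotone in these parameters. A larger~$T$ only decreases the bias term~$2^{2-T/t^\infty_{\mix,1}}$ and keeps the variance bound, since Assumption~\ref{a:r2} holds for every~$T \geq T_0$. A larger~$N$ (taking the ceiling) only decreases the variance contribution~$O(C_w/N)$ in the self-normalized estimator error. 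Theorem~\ref{t:ais} then gives~$\norm{\ip{f,\mu_N} - \ip{f,\pi_\epsilon}}_{L^2(\P)} \leq \norm{f}_\infty \delta$.

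\textbf{Step 4: from~$\norm{f}_\infty$ to~$\norm{f}_\osc$, and the Sobolev bound.} Both~$\mu_N$ and~$\pi_\epsilon$ are probability measures, so the error is unchanged when~$f$ is replaced by~$f - c$. Choosing~$c$ to be the midpoint of the range of~$f$ gives~$\norm{f-c}_\infty = \norm{f}_\osc/2$ (under either normalization of~$\osc$), which yields~\eqref{e:emp-error-bd}; I will keep the strict inequality by using strict~$T$ and the ceiling on~$N$. The~$\dot H^{-s}$ bound~\eqref{e:emp-HsBd} then follows as in~\eqref{e:empHs}. Expand~$\mu_N - \pi_\epsilon$ in Fourier modes, apply~\eqref{e:emp-error-bd} to~$f = e^{2\pi i k\cdot x}$ (whose real and imaginary parts have oscillation at most~$2$), and sum against~$\abs{k}^{-2s}$. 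This sum converges exactly when~$s > d/2$, giving the constant~$C_s$.

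The main obstacle is Step 2. Everything else is bookkeeping, but Step 2 relies on the spectral/Arrhenius estimates behind Proposition~\ref{p:langevinGen}, namely controlling~$\norm{P_k^T r_k^2}_{L^\infty} \leq 1 + O(\epsilon\nu\cdot\text{small})$ uniformly in~$k$ so that the product over~$K \sim 1/(\epsilon\nu)$ levels stays bounded. Here I will take that proposition as given.
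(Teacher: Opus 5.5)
Your proposal is correct and follows the paper's own argument. The paper likewise verifies Assumptions~\ref{a:reversible}--\ref{a:L1Linf} and then invokes Theorem~\ref{t:ais}: the finite mixing time comes from Lemma~\ref{l:uMixLangevin}, and the product bound with $C_w = \bar C_w$, $T_0 = C_T(1/\epsilon + \log K)$ comes from Lemma~\ref{l:prodBoundLangevin}, which is exactly the input you import in Step~2. Your extra bookkeeping fills in details the paper leaves implicit: checking that the proof of Theorem~\ref{t:ais} is monotone in $N$ and $T$, and subtracting the midpoint of the range of $f$ to pass from $\norm{f}_\infty$ to $\norm{f}_\osc$.
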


\begin{remark}[Effective sample size]\label{r:essLangevin}
  Proposition~\ref{p:essBound} and~\eqref{e:variance-estimate-langevin} immediately show that the sum of the square of the normalized weights~$w_i$, \dots, $w_i$ (defined in~\eqref{e:muN-def}) is bounded above by
  \begin{equation}\label{e:sswLangevin}
    \E \paren[\Big]{\sum_{i=1}^N w_i^2 } \leq \frac{8\paren*{4\bar C_w +1}}{N}\,.
  \end{equation}
  Hence the effective sample size is bounded below by
  \begin{equation}\label{e:ESSLangevin}
    \E \ESS(w_1, \dots, w_N) \geq \frac{N}{8\paren*{4\bar C_w +1}}\,.
  \end{equation}
\end{remark}

The proofs of Proposition~\ref{p:langevinGen} and Theorem~\ref{t:langevinGen} are in Section~\ref{s:langevin}.

\subsection{Convergence results for autonormalized Langevin AIS}
We now consider Algorithm~\ref{a:aisAutoNormalized}, which is an auto-normalized version of Algorithm~\ref{a:aisLMC}, where we re-normalize the weights at every step.
This is a generalization of Theorem~\ref{t:langevinGen-anais-intro} to more general potentials.

\begin{theorem}\label{t:langevinGen-anais}
  Let~$U$ be a double well potential that satisfies Assumptions~\ref{a:criticalpts}--\ref{a:massRatioBound}, and let~$\epsilon_1 \in (\epsilon_{\min}, \epsilon_{\max}]$.
  Given~$\alpha, \delta, \nu > 0$, there exists constants~$C_N(\nu, U)$ and~$\hat C_T(\alpha, \nu, U)$ such that the following holds.
  For any~$\delta > 0$ and~$\epsilon \in (\epsilon_{\min}, \epsilon_1]$, choose
  \begin{align}
    \label{e:K-choice-emp-anais}\noeqref{e:K-choice-emp-anais}
    K &= \ceil[\bigg]{ \frac{1}{\epsilon \nu} }
    \,,
    \\
    \label{e:T-choice-emp-anais}
    T &>  \hat C_T\paren*{K^{(1+\alpha)\hat \gamma_r} + \frac{1}{\epsilon} + \log\paren*{\frac{1}{\delta}} + \log\paren*{N}}
    \,,
    \\
    \label{e:N-choice-emp-anais}
    N &= \ceil[\bigg]{\frac{C_N}{\delta^2}}K^2
    \,,
  \end{align}
  and choose~$\epsilon_2$, \dots, $\epsilon_{K+1} = \epsilon$ so that~$\set{1/\epsilon_k}_{1 \leq k \leq K+1}$ are linearly spaced.
  (The constant $\hat \gamma_r$ in~\eqref{e:T-choice-emp-anais} is defined in~\eqref{e:gammaHatRDef}.)
 Let~$\set{X^i_K, \tilde w^i_K}_{1 \leq i \leq N}$ be the~$N$ points and normalized weights returned by Algorithm~\ref{a:aisAutoNormalized}, with~$K+1$ levels, running time~$T$, and kernels~$\set{P_k}$.
  For every bounded measurable $f$, we have
  \begin{equation}\label{e:an-ais-error-bound}
    \norm[\Big]{
      \sum_{i=1}^N w^i_K f(X^i_K) - \ip{f, \pi_{K+1}}
    }_{L^2(\P)}
    \leq \norm{f}_\infty \delta\,,
    \,.
  \end{equation}
  Moreover, if~$\mu_{K+1, N}$ is the empirical measure defined by
  \begin{equation}
    \mu_{K+1, N} \defeq \sum_{i=1}^N w^i_K \delta_{X^i_K}\,,
  \end{equation}
  then for every~$s > d/2$,                              
  \begin{equation}\label{e:empHs-anais}
    \E\norm[\big]{\mu_{K+1, N} - \pi_{K+1}}_{H^{-s}}^2
      \leq C_s \delta^2
      \,.
  \end{equation}
\end{theorem}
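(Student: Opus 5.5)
We need to prove Theorem~\ref{t:langevinGen-anais}, the error bound for autonormalized Langevin AIS. The particles interact only through the normalization at each level. The plan is to compare Algorithm~\ref{a:aisAutoNormalized} level by level with an idealized system in which each weighted empirical measure is exactly $\pi_{k+1}$ in expectation, and then propagate the errors through the $K$ levels.

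\textbf{Step 1 (telescoping over levels).} Write $\mu_k \defeq \sum_i w^i_k \delta_{X^i_k}$, where $w^i_k$ are the normalized weights after level $k$. The goal is
\begin{equation*}
  \E\abs{\ip{f,\mu_K} - \ip{f,\pi_{K+1}}}^2 \leq \norm{f}_\infty^2\delta^2 .
\end{equation*}
One step of the algorithm acts on $\mu_{k-1}$ in two stages. First it moves each particle by $P_k^T$. Then it multiplies by $\tilde\pi_{k+1}/\tilde\pi_k$, which is proportional to $r_k$, and renormalizes. Define the deterministic ``mean-field'' map
\begin{equation*}
  \Phi_k(\mu) \defeq \frac{(\mu P_k^T)\, r_k}{\ip{r_k, \mu P_k^T}} .
\end{equation*}
This map satisfies $\Phi_k(\pi_k) = \pi_{k+1}$. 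Conditionally on $\mathcal F_{k-1}$, the new measure $\mu_k$ is a self-normalized importance-sampling estimate of $\Phi_k(\mu_{k-1})$. Its error therefore splits into two parts. The first is a \emph{fluctuation} term: a ratio of two conditional sums of independent terms. The second is a \emph{propagated} term $\Phi_k(\mu_{k-1}) - \Phi_k(\pi_k)$. The fluctuation term is controlled by a conditional variance of the form
\begin{equation*}
  \sum_i (w^i_{k-1})^2\, P_k^T(r_k^2)(X^i_{k-1}) \leq \norm{P_k^T r_k^2}_{L^\infty} \sum_i (w^i_{k-1})^2 .
\end{equation*}
So the same quantity $\norm{P_k^T r_k^2}_{L^\infty}$ that appears in Assumption~\ref{a:r2} enters again, and it is bounded using the spectral estimates from the proof of Proposition~\ref{p:langevinGen}.

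\textbf{Step 2 (controlling the sum of squared weights).} I would prove by induction on $k$ that
\begin{equation*}
  \E\sum_i (w^i_k)^2 \lesssim \frac{\prod_{j\le k}\norm{P_j^T r_j^2}_{L^\infty}}{N} \leq \frac{\bar C_w}{N},
\end{equation*}
up to a constant factor. This is an ESS-type bound in the spirit of Proposition~\ref{p:essBound}. The induction has to handle division by the random normalizer $\tilde W_k$. I plan to do this on a good event where $\tilde W_k \geq 1/2$ times its conditional mean, and bound the bad event by Chebyshev using the same second-moment bound.

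\textbf{Step 3 (stability of $\Phi_k$, the main obstacle).} The propagated errors must not accumulate over $K \sim 1/\epsilon$ levels. For this I need $\Phi_k$ to contract the error $\mu_{k-1} - \pi_k$. Reweighting by $r_k$ can amplify errors by a factor like $\norm{r_k}_\infty$, which is $1 + O(\nu)$ per level. That factor is harmless only if $P_k^T$ first damps the mean-zero part of the error. However, $P_k^T$ mixes fully only within each well; the error in the \emph{relative mass of the two wells} is damped only at the Arrhenius rate $e^{-\hat\gamma/\epsilon_k}$. The choice $T \gtrsim K^{(1+\alpha)\hat\gamma_r}$ in~\eqref{e:T-choice-emp-anais} is designed to handle this. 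I would decompose $\mu_{k-1} - \pi_k$ into a low mode, given by the well-mass imbalance in the spectral gap eigenfunction from~\cite{HanIyerEA26}, and high modes. The high modes are killed exponentially fast once $T \gtrsim 1/\epsilon + \log(N/\delta)$, which explains the $\log N$ and $\log(1/\delta)$ terms. The low-mode error is a single scalar per level, and I expect it to evolve through an essentially multiplicative recursion with factor $1 + O(1/K)$. Its total growth over $K$ levels is then $O(1)$. The fluctuation errors of size $N^{-1/2}$ are injected at each of the $K$ levels and add up linearly in the worst case, giving a total error of order $K/\sqrt N$. This forces the choice $N \sim K^2/\delta^2$ in~\eqref{e:N-choice-emp-anais}.

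\textbf{Step 4 (conclusion).} Summing the contributions from Steps 1--3 gives~\eqref{e:an-ais-error-bound}. The $H^{-s}$ bound~\eqref{e:empHs-anais} follows exactly as in Theorem~\ref{t:ais}: expand the error in the Fourier basis, apply~\eqref{e:an-ais-error-bound} with $f = e_\xi$ (so $\norm{f}_\infty \le 1$), and sum with weights $|\xi|^{-2s}$, which converges since $s > d/2$. The delicate point throughout is Step 3: making the low-mode recursion rigorous while the normalizers are random.
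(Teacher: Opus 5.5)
Your architecture matches the paper's. Your Step~3 is the recursion~\eqref{e:Err-recursion} (with $\prod_j\beta_j\le C_\beta$ and $c_k\le\delta/K$), which the paper imports from Lemmas~4.10--4.13 of~\cite{HanIyerEA26} rather than proving. The random normalizer there is handled by Lemma~\ref{l:k+10-to-kT}, which only uses $\abs{1-R_k}$ and never divides by $R_k$. Your ``factor $1+O(1/K)$'' is a heuristic standing in for that cited result, not an argument.

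\textbf{The gap is in Step~2.} The step that fails as you describe it is the per-level induction. Write $W_k=\sum_i w^i_{k-1}r_k(X^i_k)$ and $m_k\defeq\E_{k-1}W_k=\sum_i w^i_{k-1}(P_k^Tr_k)(X^i_{k-1})$. A good event $\set{W_k\ge\frac12 m_k}$ plus Chebyshev on its complement gives at best
\[
\E_{k-1}\sum_i(w^i_k)^2\le\frac{8\norm{P_k^Tr_k^2}_{L^\infty}}{m_k^2}\sum_i(w^i_{k-1})^2 .
\]
No threshold $1-\eta$ does better, since $(1-\eta)^{-2}+\eta^{-2}\ge8$. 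Iterated over $K\approx1/(\epsilon\nu)$ levels, this loses a factor $8^K$, exponential in $1/\epsilon$, which destroys the bound. You would also need $\inf_x (P_k^Tr_k)(x)\ge1-\theta_k$ with $\sum_k\theta_k$ bounded to control $m_k$.

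The paper avoids any per-level loss with Jensen's inequality, $W_k^{-2}\le\sum_i w^i_{k-1}r_k^{-2}(X^i_k)$. After conditioning, this gives the factor $1\vee\norm{P_k^Tr_k^2}_{L^\infty}\norm{P_k^Tr_k^{-2}}_{L^\infty}$. Bounding the product of these factors (Lemma~\ref{l:prodBoundLangevin-anais}) needs two ingredients your plan lacks: the extra overlap estimate~\eqref{e:PsiEpPrimePi} for $r_k^{-2}$, and the log-convexity $Z_k^2\le Z_{k-2}Z_{k+2}$ to make the product telescope.

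\textbf{A simpler repair.} Algorithm~\ref{a:aisAutoNormalized} never resamples. The $X^i_k$ therefore evolve independently, and each renormalization divides all weights by one common scalar. Hence
\[
w^i_k=\frac{a^i_k}{\sum_j a^j_k},
\qquad
a^i_k\defeq\prod_{l\le k}\tilde r_l(X^i_l),
\qquad
\tilde r_l=\tilde\pi_{\epsilon_{l+1}}/\tilde\pi_{\epsilon_l},
\]
with the $a^i_k$ independent across $i$. A single Chebyshev event, exactly as in the proof of Proposition~\ref{p:essBound}, then gives your Step~2 bound with no per-level loss.

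The same observation shows that the output coincides with $N$ independent runs of Algorithm~\ref{a:aisLMC}, normalized empirically. So~\eqref{e:an-ais-error-bound} already follows from Theorem~\ref{t:langevinGen} (through Theorem~\ref{t:ais}) once $C_N\ge64\bar C_w$ and $\hat C_T$ is large relative to $C_T$ and $t^\infty_{\mix,\epsilon_1}$. Note $\hat\gamma_r\ge1$, so $K^{(1+\alpha)\hat\gamma_r}\ge\log K$ and the $\log K$ term in $T_0$ is covered. If the $X^i_0$ differ, normalize by $Z_{K+1}/Z_1$ instead of $\E\tilde w^1_K$, using that Lemma~\ref{l:bias_M0} is uniform in $X_0$. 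This route removes the need for Step~3 entirely. Your Step~4 is fine as written.
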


As before, it is important to study the effective sample size.
For AIS (Theorems~\ref{t:ais} and~\ref{t:langevinGen}) the bound for the effective sample size followed directly from the convergence bound (see Proposition~\ref{p:essBound} and Remark~\ref{r:essLangevin}).
For the autonormalized version, we first need to first bound the effective sample size in order to prove the convergence bound in Theorem~\ref{t:langevinGen-anais}.
We state this as our next result.

\begin{proposition}[Effective sample size]\label{p:essANLangevin}
  Using the same assumptions and notation as Theorem~\ref{t:langevinGen-anais}, there exists a constant~$C_1 = C_1(\nu, \epsilon_1)$ such that for any $k\in \set{1, \ldots, K}$ we have
\begin{equation}\label{e:sum-w-k1-to-k}
\E\paren*{\sum_{i=1}^N w_{k,i}^2} \leq \frac{C_1\bar C_w^2}{N}\,,
\end{equation}
  where~$\bar C_w$ is the constant from Proposition~\ref{p:langevinGen}.
Consequently, the effective sample size satisfies the lower bound
  \begin{equation}\label{e:essAN}
    \E \ESS(w_{k, 1}, \dots, w_{k, N}) \geq \frac{N}{C_1\bar C_w^2}\,.
  \end{equation}
\end{proposition}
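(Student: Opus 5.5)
We must prove Proposition~\ref{p:essANLangevin}: the sum of squared normalized weights in the autonormalized algorithm is $O(\bar C_w^2/N)$ at every level $k$. The idea is to compare the autonormalized weights with the weights of $N$ \emph{independent} copies of Algorithm~\ref{a:aisLMC} truncated at level $k$.

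The key structural observation is that in Algorithm~\ref{a:aisAutoNormalized} the positions $X^i_k$ evolve exactly as in Algorithm~\ref{a:aisLMC}. The renormalization only rescales all weights at a given level by a common factor. So if $\tilde v^i_k \defeq \prod_{j\le k} r_j(X^i_j)\cdot(Z_{j+1}/Z_j)$ denotes the unnormalized AIS weight of particle $i$, then
\begin{equation*}
  w_{k,i} = \frac{\tilde v^i_k}{\sum_{j=1}^N \tilde v^j_k}
  \,.
\end{equation*}
The $N$ particles are in fact independent: since the initial points are arbitrary and deterministic, particle $i$'s trajectory never depends on the others. Exchangeability only enters through the weights. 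The quantity $\sum_i w_{k,i}^2$ is therefore exactly the squared-weight sum from Proposition~\ref{p:essBound}, applied to AIS with $k+1$ levels $\pi_1,\dots,\pi_{k+1}$.

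The plan is then as follows. First, verify Assumption~\ref{a:r2} for the truncated sequence with a constant uniform in $k$. This holds because each factor satisfies $\norm{P_j^T r_j^2}_{L^\infty} \ge \ip{P_j^T r_j^2,\pi_j} = \ip{r_j^2,\pi_j} \ge 1$ by Jensen, so the partial product up to $k$ is bounded by the full product, which Section~\ref{s:langevin} bounds by $\bar C_w$ for $T$ as in~\eqref{e:KTchoice}. The choice~\eqref{e:T-choice-emp-anais} dominates this requirement. Second, apply Proposition~\ref{p:essBound}, whose proof uses only the variance bound~\eqref{e:variance-estimate} with $f=1$ together with independence. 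This gives
\begin{equation*}
  \E\sum_i w_{k,i}^2 \le \frac{8(4\bar C_w+1)}{N} \le \frac{C_1 \bar C_w^2}{N}
  \,,
\end{equation*}
using $\bar C_w\ge1$. Third, the effective sample size bound~\eqref{e:essAN} follows from Jensen, since $\E(1/S)\ge 1/\E S$.

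The main obstacle is making sure the proof of Proposition~\ref{p:essBound} goes through. It splits on the event that $\tilde W/(N\E\tilde v)$ is at least $1/2$. On that event one bounds $\sum_i (\tilde v^i)^2$ by its mean, which is at most $N(4C_w+1)(\E\tilde v)^2$. On the complement one uses $\sum_i w_i^2\le 1$ together with Chebyshev, which gives probability at most $4\cdot 4C_w/N$. This requires only independence across particles, which holds here. If the intended reading instead couples particles, for instance through a random common initialization, I would condition on the initial data and rely on the uniformity of the bounds in $X_0$ coming from the $L^\infty$ norm in Assumption~\ref{a:r2}. A squared constant $\bar C_w^2$ could arise if one instead bounds the ratio via the fourth moment of $\tilde W$, and that route gives a margin if the direct Chebyshev step needs adjustment.
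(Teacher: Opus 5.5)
Your argument is correct, but it takes a different route from the paper's.

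\textbf{The paper's route.} The paper works one level at a time. Jensen's inequality gives $W_k^{-2}\leq\sum_i w_{k-1,i}\,r_k^{-2}(X^i_k)$, and hence the conditional contraction $\E_{k-1}\sum_i w_{k,i}^2\leq\bigl(1\vee\norm{P_k^Tr_k^2}_{L^\infty}\norm{P_k^Tr_k^{-2}}_{L^\infty}\bigr)\sum_i w_{k-1,i}^2$. This is iterated over $k$ and closed with Lemma~\ref{l:prodBoundLangevin-anais}.

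\textbf{Your route.} You observe that Algorithm~\ref{a:aisAutoNormalized} never resamples. So the positions are independent, and $w_{k,i}$ is exactly the self-normalized weight of $N$ independent AIS runs stopped at level $k$. Proposition~\ref{p:essBound} then applies once the truncated product is controlled. Your remark that every factor is at least $\ip{r_j^2,\pi_j}\geq 1$ is exactly what is needed for that.

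\textbf{What each buys.} Your route gives a sharper bound, $8(4\bar C_w+1)/N$, which is linear rather than quadratic in $\bar C_w$. It also avoids the reverse-ratio estimate on $\norm{P_k^Tr_k^{-2}}_{L^\infty}$ and the auxiliary bound~\eqref{e:PsiEpPrimePi}, whose proof the paper only cites. It further shows that, for equal starting points, Algorithm~\ref{a:aisAutoNormalized} has the same output law as $N$ independent runs of Algorithm~\ref{a:aisLMC} followed by self-normalization. The paper's argument, in exchange, is uniform in the configuration $(w_{k-1,\cdot},X_{k-1}^{\cdot})$. It needs no lower bound on $\E\tilde w$, hence no mixing at the first level. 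It is also insensitive to how that configuration was produced, for example distinct or correlated initial points, or intermediate steps that couple the particles.

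\textbf{Two points to make explicit.} First, the variance bound~\eqref{e:variance-estimate} rests on $\E\tilde w_K\geq\frac12 Z_{K+1}/Z_1$ from Lemma~\ref{l:bias_M0}, that is, on $T\geq t^\infty_{\mix,1}$. Your appeal to~\eqref{e:KTchoice} covers this, but only after enlarging $\hat C_T$ in~\eqref{e:T-choice-emp-anais}, and you should say so. Second, if the arbitrary $X^i_0$ differ, the $\tilde v^i_k$ are independent but not identically distributed. Proposition~\ref{p:essBound} then has to be rerun, using that the bias bound of Lemma~\ref{l:bias_M0} and the second-moment iteration in the proof of Proposition~\ref{p:expectedDiff} hold uniformly in $X_0$. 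This puts the means in $[\frac12,\frac32]Z_{k+1}/Z_1$ and the second moments at most $\bar C_w(Z_{k+1}/Z_1)^2$. The same Chebyshev split, with threshold $\frac14 Z_{k+1}/Z_1$, still gives $O(\bar C_w/N)$.
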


The proofs of Theorem~\ref{t:langevinGen} and Proposition~\ref{p:essANLangevin} are presented in Section~\ref{s:anais}.

\section{Proof of convergence for AIS}\label{s:aisProof}

\subsection{Bias and Variance estimates (Proposition~\ref{p:expectedDiff})}

To prove the bias estimate~\eqref{e:normalized-bias} we first need an estimate on the unnormalized bias.
\begin{lemma}\label{l:bias_M0}
  If Assumptions~\ref{a:reversible} and~\ref{a:L1Linf} hold, then for any bounded measurable function~$f$, we have
\begin{equation}\label{e:bias-first-level}
\abs*{\E\left[\tilde w_K f(X_K)\right]
- \frac{Z_{K+1}}{Z_1}\ip{f,\pi_{K+1}}}
\le
2^{-T/t_{\mix,1}^\infty}
\frac{Z_{K+1}}{Z_1}
\ip{\abs{f},\pi_{K+1}} .
\end{equation}
\end{lemma}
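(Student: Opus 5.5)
We unravel the expectation $\E[\tilde w_K f(X_K)]$ as an iterated integral and telescope it using reversibility. Write $\tilde r_k = \tilde \pi_{k+1}/\tilde \pi_k$, so that $\tilde w_K = \prod_{k=1}^K \tilde r_k(X_k)$. By the Markov property,
\begin{equation*}
  \E[\tilde w_K f(X_K)]
    = P_1^T\paren[\big]{\tilde r_1 P_2^T\paren[\big]{\tilde r_2 \cdots P_K^T(\tilde r_K f)\cdots}}(X_0)\,.
\end{equation*}
Define $g_{K} = \tilde r_K f$ and recursively $g_{k} = \tilde r_k P_{k+1}^T g_{k+1}$ for $k<K$. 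Then $\E[\tilde w_K f(X_K)] = P_1^T g_1(X_0)$.

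The first key step is the stationarity identity $\ip{\tilde r_k P_{k+1}^T h, \pi_k} = \frac{Z_{k+1}}{Z_k}\ip{P_{k+1}^T h, \pi_{k+1}} = \frac{Z_{k+1}}{Z_k}\ip{h,\pi_{k+1}}$, which uses only that $\pi_{k+1}$ is invariant for $P_{k+1}$; reversibility is not needed here. Iterating this identity gives
\begin{equation*}
  \ip{g_1,\pi_1} = \frac{Z_{K+1}}{Z_1}\ip{f,\pi_{K+1}}\,.
\end{equation*}
The same computation applied to $\abs{f}$ gives $\ip{\abs{g_1},\pi_1} \le \ip{G_1,\pi_1} = \frac{Z_{K+1}}{Z_1}\ip{\abs f,\pi_{K+1}}$, where $G_1$ is defined by the same recursion with $f$ replaced by $\abs f$. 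Here we use positivity of the kernels, so that $\abs{g_k}\le G_k$ at every stage.

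The second step compares $P_1^T g_1(X_0)$ with $\ip{g_1,\pi_1}$ using the uniform mixing time. For $n = t^\infty_{\mix,1}$, Assumption~\ref{a:L1Linf} gives $\sup_x \norm{P_1^n(x,\cdot)/\pi_1 - 1}_\infty \le 1/2$. By the standard submultiplicativity of the uniform (relative $L^\infty$) distance, we get
\begin{equation*}
  \sup_x \norm[\big]{P_1^T(x,\cdot)/\pi_1 - 1}_\infty \le 2^{-\floor{T/n}}\,.
\end{equation*}
This can be proved by writing $P_1^{2n}(x,y)/\pi_1(y) - 1 = \int (p_n(x,z)-1)(p_n(z,y)-1)\,\pi_1(dz)$, where $p_n = P_1^n/\pi_1$; this step uses reversibility of $P_1$ for the symmetry in $z,y$. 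Then
\begin{equation*}
  \abs{P_1^T g_1(x) - \ip{g_1,\pi_1}}
    = \abs[\Big]{\int (p_T(x,z)-1)\, g_1(z)\,\pi_1(dz)}
    \le 2^{-\floor{T/n}} \ip{\abs{g_1},\pi_1}\,,
\end{equation*}
and inserting the bound from the first step yields~\eqref{e:bias-first-level}.

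The main obstacle is the bookkeeping in the mixing estimate: the submultiplicativity argument gives $2^{-\floor{T/t^\infty_{\mix,1}}}$ rather than $2^{-T/t^\infty_{\mix,1}}$. If $T$ is taken to be a multiple of $t^\infty_{\mix,1}$, or if the statement is read with the floor, the two agree. Otherwise one absorbs a factor of $2$, and this slack is consistent with the $2^{2-T/t_{\mix,1}^\infty}$ appearing in~\eqref{e:normalized-bias}. I would state the floor version and remark on this. A secondary point to check is the measure-theoretic justification of the kernel density $p_T$; on the torus this is fine, and in general one can phrase the argument via Radon--Nikodym derivatives.
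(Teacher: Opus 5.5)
Your proposal is correct and is essentially the paper's proof. Your $g_1$ is the paper's $\tilde r_1\tilde f_2$ with $\tilde f_i = S_i\cdots S_K f$; the mean is telescoped by stationarity exactly as you do; and Assumption~\ref{a:L1Linf} is used only at level one (the paper reduces to $f\ge 0$ via $f^\pm$ instead of dominating $\abs{g_k}\le G_k$). Your floor caveat is well founded. The paper merely asserts the factor $2^{-T/t^\infty_{\mix,1}}$, while your submultiplicativity argument yields $2^{-\floor{T/t^\infty_{\mix,1}}}$ and only for $T\ge t^\infty_{\mix,1}$. That restriction is one the lemma tacitly needs anyway: for small $T$ it can fail, e.g.\ with $K=1$, $\tilde\pi_2=\tilde\pi_1$ and $f$ the indicator of a small ball around $X_0$. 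Your argument needs only stationarity, not reversibility, together with the fact that $\sup_x\norm{P_1^T(x,\cdot)/\pi_1-1}_\infty$ is nonincreasing in $T$. Finally, the lost factor $2$ is not absorbed by the $2^2$ in~\eqref{e:normalized-bias}, since the paper's derivation already uses it up, so that bound would read $2^{2-\floor{T/t^\infty_{\mix,1}}}$.
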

\begin{proof}
For notational convenience, define
\begin{equation}\label{e:bias-def}
\Bias_{i}(f)  \defeq 
\abs*{\E\left[\tilde w_i f(X_i)\right]
- \frac{Z_{i+1}}{Z_1}\ip{f,\pi_{i+1}}}\,.
\end{equation}
Notice
\[
\Bias_{K}(f) \leq \Bias_{K}(f^+) + \Bias_{K}(f^-),
\]
and hence it suffices to establish~\eqref{e:bias-first-level} for non-negative functions~$f$.  
Thus, without loss of generality, we subsequently assume that $f \ge 0$.
\medskip

Observe that the mean of the estimator can be rewritten as
\begin{align}\label{e:mean-estimator}
\E\brak*{\tilde w_K f(X_K)} 
&= \E\brak*{\tilde w_{K-1} (\tilde r_K f)(X_K)} \\
&= \E\brak*{\tilde w_{K-1} P_K^T (\tilde r_K f)(X_{K-1})}\,.
\end{align}
On the other hand, using reversibility, the corresponding true mean satisfies
\begin{equation}\label{e:true-mean-change}
\frac{Z_K}{Z_1} 
\ip{P_K^T (\tilde r_K f), \pi_K}
= \frac{Z_K}{Z_1} \ip{\tilde r_K f, \pi_K}
= \frac{Z_{K+1}}{Z_1} \ip{f, \pi_{K+1}}\,.
\end{equation}

Combining~\eqref{e:mean-estimator}, \eqref{e:true-mean-change}, and the definition of the bias in~\eqref{e:bias-def}, we obtain
\[
\Bias_{K}(f) 
= \Bias_{K-1}\!\paren*{P_K^T (\tilde r_K f)}.
\]
Iterating this identity down to level~$1$ yields
\begin{equation}\label{e:biasM0-bias20}
\Bias_{K}(f) = \Bias_{1}\!\paren*{\tilde f_2},
\end{equation}
where
\[
\tilde f_i \defeq S_i S_{i+1}\cdots S_K f,
\qquad
S_k h \defeq P_k^T (\tilde r_k h).
\]
\medskip

Next, observe that
\[
\E\brak*{\tilde w_1 \tilde f_2(X_1)}
= P_1^T(\tilde r_1 \tilde f_2)(X_0),
\qquad
\ip{\tilde r_1 \tilde f_2, \pi_1}
= \frac{Z_2}{Z_1}\ip{\tilde f_2,\pi_2}.
\]
Since~$f \geq 0$, positivity of~$P_k$ implies~$\tilde f_2 \geq 0$.
Now Assumption~\ref{a:L1Linf} implies
\begin{align}\label{e:first-level-mixing}
\Bias_1(\tilde f_2)
  &=
  \abs{P_1^T(\tilde r_1 \tilde f_2)(X_0) - \ip{\tilde r_1 \tilde f_2, \pi_1}}
\\
  &\leq
2^{-T/t_{\mix,1}^\infty}
\ip{\tilde r_1 \tilde f_2,\pi_1}
=
2^{-T/t_{\mix,1}^\infty}
\frac{Z_2}{Z_1}
\ip{\tilde f_2,\pi_2}.
\end{align}
Moreover,
\begin{align}\label{e:restore-mean}
\frac{Z_2}{Z_1}\ip{\tilde f_2,\pi_2}
&=
\frac{Z_2}{Z_1}
\ip{P_2^T(\tilde r_2 \tilde f_3),\pi_2}
=
\frac{Z_3}{Z_1}\ip{\tilde f_3,\pi_3} \nonumber\\
&=\cdots=
\frac{Z_K}{Z_1}\ip{\tilde f_K,\pi_K}
=
\frac{Z_{K+1}}{Z_1}\ip{f,\pi_{K+1}} .
\end{align}
Combining~\eqref{e:biasM0-bias20}, \eqref{e:first-level-mixing}, and~\eqref{e:restore-mean} yields~\eqref{e:bias-first-level}, which completes the proof.
\end{proof}

Lemma~\ref{l:bias_M0} and Assumption~\ref{a:r2} quickly yield Proposition~\ref{p:expectedDiff}.

\begin{proof}[Proof of Proposition~\ref{p:expectedDiff}]
Applying~\eqref{e:bias-first-level} with the test function $f \equiv 1$, we obtain
\begin{equation}\label{e:wktilde-bias}
\abs*{\E \tilde w_K - \frac{Z_{K+1}}{Z_1}}
\le
2^{-T/t_{\mix,1}^\infty}
\frac{Z_{K+1}}{Z_1}.
\end{equation}

Consequently, if $T \ge t_{\mix,1}^\infty$, then
\begin{equation}\label{e:wktilde-lb}
\E \tilde w_K
\ge
\frac{1}{2}\frac{Z_{K+1}}{Z_1}.
\end{equation}

Next, we estimate 
\begin{align}
&\abs*{
\E\left[\tilde w_K f(X_K)\right]
- \E \tilde w_K \ip{f,\pi_{K+1}}
}
 \\
&\quad \le
\abs*{
\E\left[\tilde w_K f(X_K)\right]
- \frac{Z_{K+1}}{Z_1}\ip{f,\pi_{K+1}}
}
 \\
&\qquad
+
\abs*{
\frac{Z_{K+1}}{Z_1}\ip{f,\pi_{K+1}}
- \E \tilde w_K \ip{f,\pi_{K+1}}
}
 \\
&\quad \overset{\eqref{e:bias-first-level}}{\le}
2^{-T/t_{\mix,1}^\infty}
\frac{Z_{K+1}}{Z_1}
\ip{\abs{f},\pi_{K+1}}
+
\abs*{\E \tilde w_K - \frac{Z_{K+1}}{Z_1}}
\abs*{\ip{f,\pi_{K+1}}}
\\
\label{e:unnormalized-bias}
&\quad \overset{\eqref{e:wktilde-bias}}{\le}
2^{\,1 - T/t_{\mix,1}^\infty}
\frac{Z_{K+1}}{Z_1}
\ip{\abs{f},\pi_{K+1}} .
\end{align}

Combining~\eqref{e:wktilde-lb} and~\eqref{e:unnormalized-bias} yields the desired normalized bias estimate~\eqref{e:normalized-bias}.

\medskip

We now turn to the variance estimate~\eqref{e:variance-estimate}.  
It suffices to prove the bound
\begin{equation}\label{e:unnormalized-variance-estimate}
\var\!\bigl(\tilde w_K f(X_K)\bigr)
\le
\left(\frac{Z_{K+1}}{Z_1}\right)^2
C_w \norm{f}_\infty^2 .
\end{equation}
Indeed, combining this with~\eqref{e:wktilde-lb} immediately implies~\eqref{e:variance-estimate}.

To prove~\eqref{e:unnormalized-variance-estimate}, observe that
\begin{align}
\var\!\bigl(\tilde w_K f(X_K)\bigr)
&\le
\E\left[\tilde w_K^2 f(X_K)^2\right]
\le
\E \tilde w_K^2 \norm{f}_\infty^2
\nonumber \\
&=
\E\left[\tilde w_{K-1}^2 \tilde r_K(X_K)^2\right]
\norm{f}_\infty^2
\nonumber \\
&=
\E\left[
\tilde w_{K-1}^2
P_K^T(\tilde r_K^2)(X_{K-1})
\right]
\norm{f}_\infty^2
\nonumber \\
&\le
\E \tilde w_{K-1}^2
\,
\norm{P_K^T \tilde r_K^2}_\infty
\norm{f}_\infty^2 .
\end{align}

Iterating this argument over $k = 1, \dots, K$ yields
\begin{equation}\label{e:unnormalized-variance-prod-ub}
\var\!\bigl(\tilde w_K f(X_K)\bigr)
\le
\left(
\prod_{k=1}^K
\norm{P_k^T \tilde r_k^2}_\infty
\right)
\norm{f}_\infty^2 .
\end{equation}

Finally, we note that
\begin{equation}
\prod_{k=1}^K
\norm{P_k^T \tilde r_k^2}_\infty
=
\left(\frac{Z_{K+1}}{Z_1}\right)^2
\prod_{k=1}^K
\norm{P_k^T r_k^2}_\infty
\overset{\eqref{e:Ptrk2-ub}}{\le}
C_w
\left(\frac{Z_{K+1}}{Z_1}\right)^2 .
\end{equation}

Combining this with~\eqref{e:unnormalized-variance-prod-ub} proves~\eqref{e:unnormalized-variance-estimate}, and hence~\eqref{e:variance-estimate}. This completes the proof.
\end{proof}

\subsection{Convergence of the empirical measure (Theorem~\ref{t:ais})}

We now use Proposition~\ref{p:expectedDiff} to prove Theorem~\ref{t:ais}.

\begin{proof}[Proof of Theorem~\ref{t:ais}]
We first prove~\eqref{e:ais-small-error-and-wki-def}.  
Define
\begin{equation}\label{e:R-def}
R \defeq \frac{1}{N} \sum_{i=1}^N \tilde w_K^i .
\end{equation}
From the definition of $w_K^i$ in~\eqref{e:ais-small-error-and-wki-def}, we have
\[
\frac{1}{N} \tilde w_K^i = R\, w_K^i.
\]
Adding and subtracting the same quantity and applying the triangle inequality yields, for any $x_i \in \R^d$,
\begin{equation}\label{e:break-to-I1-I2}
\Bigl| \sum_{i=1}^N w_K^i f(x_i) - \ip{f,\pi_{K+1}} \Bigr|
\le I_1 + I_2,
\end{equation}
where
\begin{align}
I_1 &\defeq
\Bigl| \sum_{i=1}^N w_K^i f(x_i)
- \frac{1}{\E \tilde w_K^1}
  \sum_{i=1}^N R w_K^i f(x_i) \Bigr|, \\
I_2 &\defeq
\Bigl| \frac{1}{\E \tilde w_K^1}
  \sum_{i=1}^N \frac{1}{N} \tilde w_K^i f(x_i)
  - \ip{f,\pi_{K+1}} \Bigr|.
\end{align}

We observe that
\begin{align}
\label{e:I1-bound}
I_1
&= \Bigl| 1 - \frac{R}{\E \tilde w_K^1} \Bigr|
   \Bigl| \sum_{i=1}^N w_K^i f(x_i) \Bigr| \\
&\le
\Bigl| 1 - \frac{R}{\E \tilde w_K^1} \Bigr|
\norm{f}_\infty
= \Bigl| \frac{1}{N} \sum_{i=1}^N
\Bigl( \frac{\tilde w_K^i}{\E \tilde w_K^i} - 1 \Bigr) \Bigr|
\norm{f}_\infty ,
\\
\label{e:I2-bound}
I_2
&=
\Bigl|
\frac{1}{N} \sum_{i=1}^N
\Bigl(
\frac{\tilde w_K^i f(x_i)}{\E \tilde w_K^i}
- \ip{f,\pi_{K+1}}
\Bigr)
\Bigr|.
\end{align}

We recall that for any i.i.d.\ random variables $(Y_i)_{i=1}^N$ in $L^2(\P)$,
\begin{equation}\label{e:y-iid-sum-bias-variance}
\E\brak*{\Bigl( \frac{1}{N} \sum_{i=1}^N Y_i \Bigr)^2}
= \frac{1}{N} \var(Y_1)
+ \bigl( \E Y_1 \bigr)^2.
\end{equation}

Applying~\eqref{e:break-to-I1-I2} with $x_i = X_K^i$ and combining~\eqref{e:I1-bound}, \eqref{e:I2-bound}, and~\eqref{e:y-iid-sum-bias-variance}, we obtain
\begin{equation}\label{e:I3-I4-sum-bound}
\Biggl\|
\sum_{i=1}^N w_K^i f(X_K^i)
- \ip{f,\pi_{K+1}}
\Biggr\|_{L^2(\P)}
\le I_3 + I_4,
\end{equation}
where
\begin{align}
\label{e:I3-bound}
I_3
  &=
  \frac{\norm{f}_{L^\infty}}{\sqrt{N}}
\var\!\Bigl(
\frac{\tilde w_K^i}{\E \tilde w_K^i}
\Bigr)^{1/2}
\overset{\eqref{e:variance-estimate}}{\le}
\norm{f}_\infty
\sqrt{\frac{4C_w}{N}}
  \,,
\\
\label{e:I4-bound}
I_4
&=
\frac{1}{\sqrt{N}}
\var\!\Bigl(
\frac{\tilde w_K^i f(X_K^i)}{\E \tilde w_K^i}
\Bigr)^{1/2}
+
\Bigl|
\E \frac{\tilde w_K^i f(X_K^i)}{\E \tilde w_K^i}
- \ip{f,\pi_{K+1}}
\Bigr| \\
&\overset{\eqref{e:variance-estimate}, \eqref{e:normalized-bias}}{\le}
\sqrt{\frac{4C_w}{N}} \norm{f}_\infty
+ 2^{\,2 - T / t_{\mix,1}^\infty}
\norm{f}_\infty .
\end{align}

Finally, choosing $N$ and $T$ as in~\eqref{e:NT-choice} and combining
\eqref{e:I3-I4-sum-bound}, \eqref{e:I3-bound}, and~\eqref{e:I4-bound}
proves~\eqref{e:ais-small-error-and-wki-def}, as desired.
\smallskip

We now prove~\eqref{e:empHs}.
The bound~\eqref{e:empHs} follows quickly from~\eqref{e:ais-small-error-and-wki-def}, and the Fourier representation of the~$\dot H^{-s}$ norm.
Recall, if~$\varphi$ is a distribution in the homogeneous Sobolev space~$H^{-s} = H^{-s}(\T^d)$, then the norm is equivalently defined by
\begin{equation}\label{e:HsNormDef}
  \norm{\varphi}_{\dot H^{-s}}^2
    \defeq \sum_{n \in \Z^d - \set{0}} \frac{\abs{\ip{\varphi, e_n}}^2}{\abs{n}^{2s}}
    \quad\text{where}\quad
    e_n(x) \defeq e^{2 \pi i n \cdot x}
    \,.
\end{equation}

Now,
we note that for any~$n \in \Z^d - \set{0}$, the bound~\eqref{e:ais-small-error-and-wki-def} implies
\begin{equation}
  \E \ip{\mu_{K+1, N} - \pi_{K+1}, e_n}^2 \leq \delta^2\,.
\end{equation}
Thus
\begin{align}
  \E \norm{\mu_{K+1, N} - \pi_{K+1}}^2
    &= \sum_{n \in \Z^d - \set{0}}
	\E \frac{\ip{\mu_{K+1, N} - \pi_{K+1}, e_n}^2}{\abs{n}^{2s}}
    \leq \sum_{n \in \Z^d - \set{0}} \frac{\delta^2}{\abs{n}^{2s}}
  \\
    &\leq C_s \delta^2\,,
\end{align}
where
\begin{equation}\label{e:CsDef}
  C_s \defeq \sum_{n \in \Z^d - \set{0}} \frac{1}{\abs{n}^{2s}}
  \,.
\end{equation}
This concludes the proof.
\end{proof}

\subsection{The effective sample size (Proposition~\ref{p:essBound})}
We now prove Proposition~\ref{p:essBound} bounding the effective sample size.
The bound follows from a more general fact about normalized sums of nonnegative i.i.d.\ random variables.

\begin{proof}[Proof of Proposition~\ref{p:essBound}]
  Let~$\set*{\tilde \zeta_i}_{1\leq i \leq N}$ be non-negative i.i.d.\ $L^2$ random variables with~$\tilde \mu = \E\brak*{\tilde \zeta_1}$ and~$\tilde \sigma^2 = \var\paren*{\tilde \zeta_1}$ and define
  \begin{equation}
    \zeta_i = \frac{\tilde \zeta_i }{ S_N}
    \quad\text{where}\quad
    S_N = \sum_{i=1}^N \tilde \zeta_i
    \,.
  \end{equation}
  We claim that we must have
\begin{equation}\label{e:ess-ais-langevin}
\E\brak*{\sum_{i=1}^N \zeta_i^2} \leq \frac{8\paren*{\tilde \sigma^2 + \tilde \mu^2}}{N\tilde \mu^2}\,.
\end{equation}

  Momentarily postponing the proof of~\eqref{e:ess-ais-langevin}, we now prove~\eqref{e:ais-ssw-ub} and~\eqref{e:ais-ess-lb}.
  Setting~$\tilde \zeta_i = \tilde w_K^i$ and taking~$f \equiv 1$ in~\eqref{e:variance-estimate} gives
\begin{equation}
\tilde \sigma^2 \leq 4 C_w \tilde \mu^2\,.
\end{equation}
  Combining this with~\eqref{e:ess-ais-langevin} yields~\eqref{e:ais-ssw-ub} as desired.
Using this and Jensen's inequality, implies the lower bound on the effective sample size in~\eqref{e:ais-ess-lb}.
\medskip

It remains to prove the claim~\eqref{e:ess-ais-langevin}.
  Let~$\bar S_N = S_N / N$. Then
\begin{align}
  \E\brak*{\sum_{i=1}^N \zeta_i^2}
    &= N\E \brak*{\frac{\tilde \zeta_1^2}{S_N^2}\one_{\set{\bar S_N \geq \frac{\tilde \mu}{2}}}}
      + \E\brak*{\frac{1}{S_N^2}\sum_{i=1}^N \tilde \zeta_i^2 \one_{\set{\bar S_N < \frac{\tilde \mu}{2}}}}\\
   &\leq \frac{4\paren*{\tilde \sigma^2 + \tilde \mu^2}}{N\tilde \mu^2}
      + \P\brak*{\abs*{\bar S_N - \tilde \mu} > \frac{\tilde \mu}{2}}\\   
   &\leq \frac{8\paren*{\tilde \sigma^2 + \tilde \mu^2}}{N\tilde \mu^2}\,.
\end{align}
Here we used~$\sum_{i=1}^N \zeta_i^2 \leq 1$ to obtain the first inequality, and then Chebyshev's inequality to obtain the second inequality.
This concludes the proof.
\end{proof}

\subsection{The \texorpdfstring{$\chi^2$}{chi2} divergence (Propositions \ref{p:aisChi2} and \ref{p:aisGaussian})}\label{s:chi2Proof}

The proof of Proposition~\ref{p:aisGaussian} is short and direct.

\begin{proof}[Proof of Proposition~\ref{p:aisGaussian}]
  Notice $P_k^\infty r_k^2 = \ip{r_k^2, \pi_k}$.
  Thus
  \begin{align}
    \prod_{k = 1}^K \norm{P_{k}^\infty r_{k}^2}_{L^\infty}
      & = 
      \prod_{k = 1}^K \ip{ r_k^2, \pi_k}
	 = \exp\paren[\bigg]{
	    \sum_{k = 1}^K
	      \ln \ip{r_k^2 , \pi_k}
	  }
    \\
      \label{e:prodR2}
	&\leq
	  \exp\paren[\bigg]{
	    \sum_{k = 1}^K
	      \paren[\big]{ \ip{r_k^2,\pi_k} - 1 }
	  }
	= \exp\paren[\bigg]{
	    \sum_{k = 1}^K \chi^2(\pi_{k+1}; \pi_{k})
	  }
      \,,
  \end{align} 
  which proves~\eqref{e:prodPkInfChi2}.
\end{proof}

The proof of Proposition~\ref{p:aisGaussian} is also a direct calculation.
\begin{proof}[Proof of Proposition~\ref{p:aisGaussian}]
  For the second assertion, the choice of temperatures described in the statement reduces to choosing
  \begin{equation}\label{e:epsilonk-def}
    \epsilon_k = \frac{\epsilon K}{(k - 1)(1 - \epsilon) + \epsilon K}
    \,.
  \end{equation}
  Since~$\pi_k$ is the Gaussian~$\mathcal N(0, \epsilon_k I_d)$, the~$\chi^2$ divergence can be computed explicitly.
  Using, for instance Proposition~17.1 in~\cite{ChopinPapaspiliopoulos20}), we see
  \begin{equation}\label{e:chi2Gaussian}
    \chi^2( \pi_{k+1}, \pi_k)
    = \paren[\bigg]{ \frac{\epsilon_k^2}{2 \epsilon_k \epsilon_{k+1} - \epsilon_{k+1}^2} }^{d/2} - 1
    \,.
  \end{equation}

  To obtain~\eqref{e:prodPkInfGaussian}, set
  \begin{equation}
    \delta_k \defeq 1 - \frac{\epsilon_{k+1}}{\epsilon_k}
      = \paren[\Big]{\frac{1 - \epsilon}{\epsilon K}} \epsilon_{k+1}
      \,,
  \end{equation}
  and use~\eqref{e:chi2Gaussian} to obtain
  \begin{equation}\label{e:chi2}
    \chi^2(\pi_{k+1}; \pi_k)
      = \paren[\bigg]{
	1 + \frac{\delta_k^2}{1 - \delta_k^2}
      }^{d/2} - 1
      \leq C d \delta_{k+1}^2
      \leq \frac{C d \epsilon_{k+1}^2}{\epsilon^2 K^2}
      \,.
  \end{equation}
  Hence using~\eqref{e:prodR2}, \eqref{e:epsilonk-def} and~\eqref{e:chi2} we obtain
  \begin{align}
    \prod_{k = 1}^K \norm{P^\infty_k r_k^2}
      &\leq
	\exp\paren[\bigg]{ \sum_{1}^K \frac{C d}{\paren[\big]{(k - 1)(1 - \epsilon) + \epsilon K}^2 } }
      \leq
	\exp\paren[\Big]{
	  \frac{C d}{\epsilon K - 1}
	}
      \leq C
      \,,
  \end{align}
  proving~\eqref{e:prodPkInfGaussian} as desired.
\end{proof}


\section{Proof of convergence for Langevin AIS}\label{s:langevin}

We now prove Theorem~\ref{t:langevinGen}.
We assume, without loss of generality, that the first initial temperature~$\epsilon_1 = 1$.

\subsection{Proofs of Proposition \ref{p:langevinGen} and Theorem \ref{t:langevinGen}}
We prove Proposition~\ref{p:langevinGen} and Theorem~\ref{t:langevinGen} by verifying that the transition kernels for~\eqref{e:langevin} satisfy Assumptions~\ref{a:reversible}--\ref{a:L1Linf}, and then use Proposition~\ref{p:expectedDiff} and Theorem~\ref{t:ais}.
The fact that Assumption~\ref{a:reversible} holds is well known.
Checking Assumptions~\ref{a:r2} and~\ref{a:L1Linf} require a little work and we state them as the following lemmas.

\begin{lemma}\label{l:uMixLangevin}
  Let~$P_{\epsilon, t}$ be the transition kernel of~\eqref{e:langevin} at time~$t$.
  If~$U$ satisfies Assumptions~\ref{a:criticalpts}--\ref{assumption: nondegeneracy}, then the uniform mixing time of~$P_{1}$ is
  finite (i.e.\ for $\epsilon_1 = 1$, $P_{\epsilon_1}$ satisfies Assumption~\ref{a:L1Linf}).
\end{lemma}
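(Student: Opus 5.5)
Throughout, $P_{1,t}$ denotes the transition kernel of~\eqref{e:langevin} with $\epsilon = 1$ on the compact manifold $\T^d$, and $L f = \Delta f - \grad U\cdot\grad f$ is its generator. The plan is to show that the density $p_t(x,\cdot)$ of $P_{1,t}(x,\cdot)$ with respect to $\pi_1$ converges to $1$ uniformly in $x$. Uniformity of this convergence is exactly finiteness of $t_{\mix,1}^\infty$ in~\eqref{e:umixDef}, read along integer times or times $nT$.

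The argument rests on two standard facts, combined through a three-step splitting $t = s + \tau + s$.

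First, the operator $L$ is self-adjoint on $L^2(\pi_1)$, and it has compact resolvent because $\T^d$ is compact and $U$ is smooth. It has a simple eigenvalue $0$, since the kernel consists of the constants (the Dirichlet form is $\int \abs{\grad f}^2 d\pi_1$ and $\T^d$ is connected). Hence there is a spectral gap $\lambda_1 > 0$ (Poincaré inequality), and
\begin{equation}
\norm{P_{1,\tau} g - \ip{g,\pi_1}}_{L^2(\pi_1)} \le e^{-\lambda_1 \tau}\norm{g}_{L^2(\pi_1)}.
\end{equation}

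Second, we need ultracontractivity: $\norm{P_{1,s}}_{L^1(\pi_1)\to L^2(\pi_1)} \le C_s < \infty$ for some fixed $s>0$, which by duality and reversibility is the same as $\norm{P_{1,s}}_{L^2\to L^\infty} \le C_s$. I would obtain this from the fact that $\pi_1$ has density bounded above and below by positive constants, so the $L^p(\pi_1)$ and Lebesgue $L^p$ norms are comparable. Then one of the following works: a Sobolev (Nash) inequality on $\T^d$ combined with the Dirichlet form; parabolic regularity and Gaussian upper bounds for the heat kernel of a uniformly elliptic operator with smooth coefficients on a compact manifold; or elliptic regularity for $(1-L)^{-m}$ mapping $L^2$ into $H^{2m}\subset L^\infty$ for $2m>d/2$, together with the spectral calculus bound $\norm{(1-L)^m P_{1,s}}_{L^2\to L^2} \le \sup_\lambda (1+\lambda)^m e^{-\lambda s}$. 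The elliptic route uses only the regularity of $U$ given in Assumption~\ref{a:criticalpts}.

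Combining, for $t = 2s+\tau$ the symmetric kernel satisfies
\begin{equation}
\sup_{x,y}\abs{p_t(x,y)-1} \le \norm{P_{1,s}}_{L^2\to L^\infty}\, e^{-\lambda_1\tau}\,\norm{P_{1,s}}_{L^1\to L^2} \le C_s^2 e^{-\lambda_1 \tau},
\end{equation}
because $P_{1,t}-\pi_1 = P_{1,s}(P_{1,\tau}-\pi_1)P_{1,s}$ and $P_{1,s}$ commutes with the projection onto constants. Choosing $\tau$ so that $C_s^2 e^{-\lambda_1\tau} < 1/2$ makes $t_{\mix,1}^\infty$ finite.

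The main obstacle is the ultracontractive bound, and in particular making the smoothing estimate rigorous with only finite regularity of $U$. The remaining steps are routine spectral theory. Note also that Assumption~\ref{assumption: nondegeneracy} is not actually needed here: the conclusion is qualitative, with $\epsilon_1 = 1$ fixed.
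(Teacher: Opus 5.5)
Your proof is correct, but it follows a different route from the paper's. The paper expands $P_{1,T}f-\langle f,\pi_1\rangle=\sum_{i\ge2}e^{-\lambda_{i,1}T}\langle f,\psi_{i,1}\rangle_{L^2(\pi_1)}\psi_{i,1}$ and bounds its sup norm by $\|f\|_{L^1(\pi_1)}\sum_{i\ge2}e^{-\lambda_{i,1}T}\|\psi_{i,1}\|_\infty^2$. It controls this sum with Lemma~\ref{l:psi-Linfty-weighted-sum}, taking $m=2$ and $\Lambda_1=\lambda_{2,1}$. That lemma in turn rests on two ingredients: the Weyl-type counting bound of Lemma~\ref{l:weylfunction-ub}, and the sup-norm bounds on individual eigenfunctions in Lemma~\ref{l:eigenfunctions-Linfty-ub}, obtained from weighted $\dot H^m$ estimates and Sobolev embedding.

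You instead factor $P_{1,t}-\Pi=P_{1,s}(P_{1,\tau}-\Pi)P_{1,s}$, with $\Pi f=\langle f,\pi_1\rangle$. This needs only the $L^2$ spectral gap and one fixed-time smoothing estimate, so eigenvalue counting and eigenfunction bounds never enter. Your elliptic route is adequate under Assumption~\ref{a:criticalpts}: $(1-L)^{-m}$ maps $L^2$ into $H^{2m}$ once $U\in C^{2m-1}$, and the smallest $m$ with $2m>d/2$ satisfies $2m-1\le 1+\lfloor d/2\rfloor$.

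The paper's choice is driven by reuse. It needs Lemma~\ref{l:psi-Linfty-weighted-sum} anyway, with explicit $\epsilon$-dependence, to make the modes $i\ge3$ negligible in Lemma~\ref{l:eTLk-tilde-rk^2}, so the case $\epsilon=1$ costs nothing extra. Your argument is the more elementary and self-contained one: it is the standard ``ultracontractivity plus spectral gap'' mechanism. In principle, the same factorization with $P_{\epsilon,\tau}$ restricted to the span of $\{\psi_{i,\epsilon}\}_{i\ge3}$ could also replace the paper's tail estimate there, provided you track how the smoothing constant degenerates as $\epsilon\to0$.
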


\begin{lemma}\label{l:prodBoundLangevin}
  Let $U$ be a double well potential that satisfies Assumption~\ref{a:criticalpts}--\ref{a:massRatioBound} with $\epsilon_{\min} < 1 \leq \epsilon_{\max}$.
  There exists constants~$\bar C_w = \bar C_w(U, \nu)$ and $C_T = C_T(U)$ such that the following holds.
For any~$\epsilon \in (\epsilon_{\min}, 1]$, choose~$K = \ceil*{1/(\epsilon\nu)}$ and choose~$\epsilon_2$, \dots, $\epsilon_{K+1} = \epsilon$ so that~$\set{1/\epsilon_k}_{1 \leq k \leq K+1}$ are linearly spaced. Then, Assumption~\ref{a:r2} holds with
\begin{equation}\label{e:CwT0}
C_w = \bar C_w \quad\text{and}\quad T_0=C_T\paren*{\frac{1}{\epsilon} + \log K}\,.
\end{equation}  
\end{lemma}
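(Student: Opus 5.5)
We need to bound $\prod_{k=1}^K \norm{P_k^T r_k^2}_{L^\infty}$, where $P_k = P_{\epsilon_k}$ is the Langevin semigroup at temperature $\epsilon_k$ and $r_k = \pi_{\epsilon_{k+1}}/\pi_{\epsilon_k}$. The plan is to bound each factor by $1 + O(1/K^2) + (\text{small})$, so that the product is bounded uniformly in $\epsilon$.

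\textbf{Splitting each factor.} Write
\begin{equation*}
  P_k^T r_k^2 = \ip{r_k^2, \pi_k} + P_k^T\paren[\big]{r_k^2 - \ip{r_k^2,\pi_k}}
  \,.
\end{equation*}
The first term equals $1 + \chi^2(\pi_{k+1};\pi_k)$. With $\beta_k = 1/\epsilon_k$ linearly spaced with gap $h = (1/\epsilon - 1)/K \le \nu$, we have $r_k = e^{-hU} Z_k/Z_{k+1}$. Hence $\chi^2(\pi_{k+1};\pi_k)$ is essentially $h^2 \var_{\pi_k}(U)$ up to higher order terms. By a Laplace-type estimate (the wells are non-degenerate), $\var_{\pi_k}(U) = O(\epsilon_k^2 + \text{well-energy spread})$. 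Under equal depths, or more generally under Assumption~\ref{a:massRatioBound}, this variance is $O(\epsilon_k^2)$, up to a bounded term from the depth difference. This gives $\sum_k \chi^2 \le C\nu^2\sum_k \epsilon_k^2 \lesssim C$, because $\epsilon_k \approx 1/(1+kh)$ makes $\sum_k h^2\epsilon_k^2$ converge. This mirrors the Gaussian computation in Proposition~\ref{p:aisGaussian}. Then Proposition~\ref{p:aisChi2}'s inequality $\ln(1+x)\le x$ handles this part.

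\textbf{The fluctuation term.} This is the main obstacle, because $P_k$ mixes only exponentially slowly in $1/\epsilon_k$ (spectral gap $\lambda_1 \sim e^{-\hat\gamma/\epsilon_k}$). I would use the spectral structure of the generator collected in~\cite{HanIyerEA26}. There is one small eigenvalue $\lambda_{1,k}$ with eigenfunction $\psi_k$, close to a normalized difference of indicators of $\Omega_1,\Omega_2$. The rest of the spectrum is bounded below by a constant $c$ independent of $\epsilon_k$ (Kolokoltsov, Bovier--Gayrard--Klein, Menz--Schlichting). Decompose
\begin{equation*}
  g_k = r_k^2 - \ip{r_k^2,\pi_k} = \ip{g_k,\psi_k}_{\pi_k}\psi_k + g_k^\perp
  \,.
\end{equation*}
For the component $g_k^\perp$, $\norm{P_k^T g_k^\perp}_{L^2(\pi_k)} \le e^{-cT}\norm{g_k}_{L^2(\pi_k)}$. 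I would upgrade this to $L^\infty$ by running a unit of time and using hypercontractivity/ultracontractivity, or Sobolev embedding with elliptic regularity; this is where the $C^{6\vee(1+\lfloor d/2\rfloor)}$ regularity enters. That upgrade costs a factor $e^{C/\epsilon_k}$ (from $\norm{\pi_k^{-1}}_\infty$ and the $L^2\to L^\infty$ smoothing constants at temperature $\epsilon_k$). Choosing $T \ge C_T(1/\epsilon + \log K)$ makes this contribution at most $1/K^2$.

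For the slow component, $\norm{\psi_k}_\infty$ is bounded (by $C_m$-type constants from Assumption~\ref{a:massRatioBound}), and $P_k^T\psi_k = e^{-\lambda_{1,k}T}\psi_k$ need not decay. So I must show the coefficient $\ip{g_k,\psi_k}_{\pi_k}$ is itself $O(h^2\epsilon_k^2)$, or at least summable over $k$. To do this, expand $r_k^2 = 1 - 2h(U-\ip{U,\pi_k}) + O(h^2)$. The linear term's projection onto $\psi_k$ is $2h\bigl(\ip{U,\pi_k}_{\Omega_1}-\ip{U,\pi_k}_{\Omega_2}\bigr)$ after normalization. By Laplace asymptotics with equal well depths this difference is $O(\epsilon_k)$, giving a contribution $h\epsilon_k$, which is not obviously summable ($\sum h\epsilon_k \sim \log(1/\epsilon)$).

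I expect this to be the delicate point. I would try first to track the well masses $m_{i,k}=\pi_k(\Omega_i)$ exactly rather than expand. The slow part of $P_k^Tr_k^2$ at a point in $\Omega_i$ is approximately $\paren{m_{i,k+1}/m_{i,k}}^2/\text{(mass-averaged)}$. The product over $k$ then telescopes, giving roughly $\prod_k (m_{i,k+1}/m_{i,k})^2 \le (m_{i,K+1}/m_{i,1})^2 \le C_m^4$, which is bounded by Assumption~\ref{a:massRatioBound}. Combining these pieces, the product is at most $C_m^4\exp\bigl(C\sum_k(\chi^2\text{-type within-well terms} + K^{-2})\bigr) \le \bar C_w$.
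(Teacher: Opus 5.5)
Your handling of the mean term $\ip{r_k^2,\pi_k}=1+\chi^2(\pi_{k+1};\pi_k)$ and of the modes above the second eigenvalue is essentially fine. The latter is what the paper does: $L^\infty$ bounds on eigenfunctions cost a factor $e^{\norm{U}_\infty/\epsilon}$, which is absorbed by $T\geq C_T(1/\epsilon+\log K)$.

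The gap is the slow mode, which is the heart of the lemma, and your proposed resolution does not work as stated. Write $m_{i,k}=\pi_k(\Omega_i)$. The $L^\infty$ norm takes the supremum separately at each level, so your heuristic gives a factor $\max_i (m_{i,k+1}/m_{i,k})^2$ per level. But $\prod_k \max_i (m_{i,k+1}/m_{i,k})^2$ does not telescope to $\max_i (m_{i,K+1}/m_{i,1})^2$. Bounding it requires $\sum_k \abs{\pi_{k+1}(\Omega_1)-\pi_k(\Omega_1)}\leq C$ uniformly in $\epsilon$, i.e.\ bounded variation of $\epsilon\mapsto\pi_\epsilon(\Omega_1)$ on $(0,1]$. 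That is not a consequence of the mass lower bound in Assumption~\ref{a:massRatioBound}; the paper imports it from Lemma~8.2 of~\cite{HanIyerEA26}.

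You would also need a quantitative version of ``the slow part is the within-well average'': namely, that $\psi_{2,k}$ pairs with the colder measure $r_k^2\pi_k$ like a two-level step function. The error there is what produces the $e^{-\gamma/\epsilon_k}$ term. Like the uniform bound on $\norm{\psi_{2,k}}_\infty$, it is genuine spectral input, not a consequence of Laplace asymptotics or of Assumption~\ref{a:massRatioBound}.

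Your first-order expansion is also too pessimistic. To first order its coefficient is a bounded multiple of the mass increment $\pi_{k+1}(\Omega_1)-\pi_k(\Omega_1)$: with equal depths the leading $\epsilon d/2$ parts of the two within-well means of $U$ cancel. So it is not just $O(h\epsilon_k)$, and the expansion route leads back to the same summability question for mass increments.

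The missing idea is to use the linear spacing exactly instead of expanding in $h$. Since $2/\epsilon_{k+1}-1/\epsilon_k=1/\epsilon_{k+2}$, one has $r_k^2\pi_k=\frac{Z_kZ_{k+2}}{Z_{k+1}^2}\pi_{k+2}$. Hence $\ip{r_k^2,\pi_k}=Z_kZ_{k+2}/Z_{k+1}^2$ and $\ip{r_k^2,\psi_{2,k}}_{L^2(\pi_k)}=\frac{Z_kZ_{k+2}}{Z_{k+1}^2}\ip{\psi_{2,k},\pi_{k+2}}$. The three pieces are then controlled as follows:
\begin{itemize}
\item Property~\ref{p:var} with $\gamma=\hat\gamma/2$ bounds $\abs{\ip{\psi_{2,k},\pi_{k+2}}}$ by $C_\gamma\bigl(e^{-\hat\gamma/(2\epsilon_k)}+\abs{\pi_{k+2}(\Omega_1)-\pi_k(\Omega_1)}\bigr)$.
\item Property~\ref{property:eigenfunctions} bounds $\norm{\psi_{2,k}}_\infty$.
\item The sum over $k$ is a geometric series plus the bounded-variation constant.
\end{itemize}
This gives $\norm{P_k^Tr_k^2}_{L^\infty}\leq\frac{Z_kZ_{k+2}}{Z_{k+1}^2}(1+a_k+1/K)$ with $\sum_k a_k$ bounded. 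The prefactors telescope exactly: $\prod_{k=1}^K Z_kZ_{k+2}/Z_{k+1}^2=Z_1Z_{K+2}/(Z_2Z_{K+1})\leq Z_1/Z_2\leq e^{\nu\norm{U}_\infty}$. So no variance asymptotics are needed for the $\chi^2$ part either.

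As written, your bound $\sum_k\chi^2\lesssim\nu^2\sum_k\epsilon_k^2$ holds only for equal depths. With unequal depths the $O(1)$ variance contributes $O(Kh^2)=O(\nu/\epsilon)$, which is finite only because $\epsilon_{\min}>0$ is then forced.
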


Given Lemmas~\ref{l:uMixLangevin} and~\ref{l:prodBoundLangevin}, the proofs of Proposition~\ref{p:langevinGen} and Theorem~\ref{t:langevinGen} are immediate.

\begin{proof}[Proof of Proposition~\ref{p:langevinGen}]
  It is well known (see for instance Chapter~8 in~\cite{Kolokoltsov00}, or Chapter 4.6 in~\cite{Pavliotis14}) that~$P_{\epsilon, \cdot}$, the transition kernel of~\eqref{e:langevin}, is reversible, and~$\pi_\epsilon$ is the unique stationary distribution.
  The choice of $K$ in~\eqref{e:KTchoice}, and by Lemmas~\ref{l:uMixLangevin}--\ref{l:prodBoundLangevin}, will now guarantee that Assumptions~\ref{a:reversible}--\ref{a:L1Linf} are satisfied, with the constants $C_w$ and $T_0$ given in~\eqref{e:CwT0}.
Therefore, Proposition~\ref{p:expectedDiff} applies and yields the bias and variance estimates~\eqref{e:normalized-bias-langevin}--\eqref{e:variance-estimate-langevin}.  
\end{proof}

\begin{proof}[Proof of Theorem~\ref{t:langevinGen}]
  As with the proof of Proposition~\ref{p:langevinGen} presented above,
  Assumptions~\ref{a:reversible}--\ref{a:L1Linf} hold, with $C_w$ and $T_0$ as specified in~\eqref{e:CwT0}.  
Consequently, Theorem~\ref{t:ais} applies and yields~\eqref{e:emp-error-bd}--\eqref{e:emp-HsBd}.  
\end{proof}

It remains to prove Lemmas~\ref{l:uMixLangevin} and~\ref{l:prodBoundLangevin}.
Their proofs require certain spectral estimates, which are described in Section~\ref{s:spectral}.
Following this, we prove Lemmas~\ref{l:uMixLangevin}, \ref{l:prodBoundLangevin} in Section~\ref{s:LangevinLemmaProofs}.

\subsection{Spectral decomposition}\label{s:spectral}

Let~$\mathcal L_\epsilon$, defined by
\begin{equation}
  \mathcal L_\epsilon f = -\grad U \cdot \grad f + \epsilon \lap f
  \,,
\end{equation}
be the generator of~\eqref{e:langevin}.
We know that for any test function~$f$, the action of~$P_{\epsilon, t}$ on~$f$ satisfies the \emph{Kolmogorov backward equation} (see for instance~\cite[Chapter 2]{Pavliotis14}).
That is, if
\begin{equation}
  u_t(x) = P_{\epsilon, t} f(x) \defeq \int_{\T^d} P_{\epsilon, t}(x, dy) f(y)
  \,,
\end{equation}
then
\begin{equation}\label{e:KBackward}
  \partial_t u = \mathcal L_\epsilon u
  \quad\text{and}\quad
  u_0 = f\,.
\end{equation}

We now state certain spectral properties of~\eqref{e:langevin} which will be used in the proofs of Lemmas~\ref{l:uMixLangevin} and~\ref{l:prodBoundLangevin}.
Let~$L^2(\pi_\epsilon)$ denote the weighted~$L^2$ space with inner-product and norm defined by
\begin{equation}
  \ip{f, g}_{L^2(\pi_\epsilon)}
    \defeq \int_{\T^d} f g \, \pi_\epsilon \, dx
  \quad\text{and}\quad
  \norm{f}^2_{L^2(\pi_\epsilon)} \defeq 
    \int_{\T^d} \abs{f}^2 \pi_\epsilon \, dx
    \,,
\end{equation}
respectively.
It is well known (see for instance~\cite[Chapter 8]{Kolokoltsov00}) that the operator~$-\mathcal L_\epsilon$ is a self-adjoint operator the weighted space~$L^2(\pi_\epsilon)$, and has a discrete spectrum with positive eigenvalues.
We denote the eigenvalues by
\begin{equation}\label{e:evalOrder}
  0 = \lambda_{1, \epsilon}
    < \lambda_{2, \epsilon}
    \leq \lambda_{3, \epsilon}
    \cdots
    \,,
\end{equation}
and the corresponding~$L^2(\pi_\epsilon)$ normalized eigenfunctions by~$\psi_{1, \epsilon}$, $\psi_{2, \epsilon}$, etc.
We note that the smallest eigenvalue~$\lambda_{1, \epsilon}$ is~$0$, and the second smallest eigenvalue~$\lambda_{2, \epsilon}$ is strictly positive (i.e.\ $P_\epsilon$ has a spectral gap).
In fact, for the proof of Lemmas~\ref{l:uMixLangevin} and~\ref{l:prodBoundLangevin} the properties we need were collected in a convenient form in~\cite{HanIyerEA26} as Properties~4.6--4.8.
We quote the portions we need here for easy reference.

\begin{property}[Eigenvalue bounds]\label{p:spectral}
  For every~$H > \hat U$, there exists a constant~$C_H$ such that for every~$\epsilon \in (0, 1]$ we have
\begin{equation}\label{e:egvalLEpsilon}
  \lambda_{2,\epsilon}
    \geq C_H \exp\paren[\Big]{-\frac{H}{\epsilon}}
  .
\end{equation}
  Also, there exists~$\Lambda$ such that for all~$\epsilon \in (0, 1]$ such that
  \begin{equation}\label{e:lambdaiLower}
    \lambda_{i,\epsilon}\geq \Lambda,\quad 
    \text{for all } i\geq 3
    .
  \end{equation}
\end{property}

\begin{property}[Eigenfunction variation]\label{p:var}
  The function~$\epsilon \mapsto \pi_\epsilon(\Omega_1)$ is of bounded variation on the interval~$(0, 1]$.
  Moreover, for every~$\gamma < \hat \gamma$, there exists a constant~$C_\gamma$ such that for every~$0 < \epsilon' < \epsilon \leq 1$ we have
  \begin{gather}
    \label{e:PsiEpPi}
    \abs[\big]{\ip{\psi_{2, \epsilon}, \pi_{\epsilon'}}}
      \leq 
    C_\gamma\paren[\Big]{\exp\paren[\Big]{-\frac{\gamma}{\epsilon}}+\abs[\big]{\pi_{\epsilon'}(\Omega_1)-\pi_{\epsilon}(\Omega_1)}}
    \,.
  \end{gather}
\end{property}

\begin{property}[Eigenfunction bounds]\label{property:eigenfunctions}
  There exists constant~$C_{\psi_2}$, independent of~$\epsilon$ such that 
  \begin{equation}\label{eq:defCpsi}
    \sup\limits_{0< \epsilon\leq 1}\|\psi_{2,\epsilon}\|_{L^{\infty}(\mathbb{T}^d)}\leq C_{\psi_2}.
  \end{equation}
\end{property}

Under Assumptions~\ref{a:criticalpts}--\ref{a:criticalpts}, Section 9 in~\cite{HanIyerEA26} shows that Properties~\ref{p:spectral}--\ref{property:eigenfunctions} hold.
To briefly discuss the significance of Properties~\ref{p:spectral}--\ref{property:eigenfunctions}, we note that convergence of (reversible) Markov processes can effectively be studied using the spectral decomposition (see for instance Chapter 12 in~\cite{LevinPeres17}).
In particular, for reversible Markov processes the rate of convergence is controlled both above and below by the \emph{spectral gap}, which in our case is simply~$\lambda_{2, \epsilon}$.

For the Langevin system with a multimodal potential, the lower bound~\eqref{e:egvalLEpsilon} is sharp (see~\cite[Chapter 8, Proposition 2.2]{Kolokoltsov00}, or~\cite{MenzSchlichting14}), and so spectral gap is exponentially small.
Thus sampling by simulating~\eqref{e:langevin} directly, will necessarily cost~$O(e^{\hat U/\epsilon})$, which is not desirable.

However, in our situation the third eigenvalue is large (i.e.\ bounded independent of~$\epsilon$), as asserted by~\eqref{e:lambdaiLower} in Property~\ref{p:spectral}.
This will give fast convergence \emph{provided} we control the projection onto the second eigenspace.
Variants of this idea have been used by several authors in many contexts to accelerate convergence~\cite{
  ConstantinKiselevEA08,
  KwokLauEA13,
  FengIyer19
}.
A warm start to Langevin dynamics will also control the projection on the second eigenspace, and this was recently used by Koehler, Lee, and Vuong~\cite{KoehlerLeeEA25} in a related multimodal sampling problem.


In~\cite{HanIyerEA26}, the authors related the projection onto the second eigenspace as a \emph{mass imbalance}, and controlled it by resampling points with weights proportional to the ratio of the densities.
This is a standard technique used in sequential Monte Carlo algorithms (see for instance~\cite{DoucetFreitasEA01,Liu08,ChopinPapaspiliopoulos20}), and is applicable in many situations of practical interest.
The disadvantage of this approach, however, is that one loses independence of realizations, and as a result the estimates are harder to obtain and weaker than in the i.i.d.\ case.

In our situation, we reweight points instead of resampling, and the variance is controlled by the product~\eqref{e:Ptrk2-ub} appearing in the constant~$C_w$.
That is, the reweighting controls the error in terms of how fast the kernel~$P_{\epsilon_k}$ mixes~$r_{\epsilon_k}^2$.
This in turn is controlled by the projection onto the second eigenspace, which we will bound using Properties~\ref{p:var} and~\ref{property:eigenfunctions} (see Lemma~\ref{l:eTLk-tilde-rk^2}, below for the precise estimates).

We now use Properties~\ref{p:spectral}--\ref{property:eigenfunctions} to prove Lemmas~\ref{l:uMixLangevin} and~\ref{l:prodBoundLangevin}.
We first control the high order terms in the spectral decomposition, and then control the terms in the product~\eqref{e:Ptrk2-ub}.
These steps are stated as the next two lemmas.

\begin{lemma}\label{l:psi-Linfty-weighted-sum}
    Let $\epsilon > 0$. Suppose there exist constants $\Lambda_1 > 0$ and $m \in \N$ such that $\lambda_{m,\epsilon} \ge \Lambda_1$. Then there exists a constant 
    $\tilde C_{\psi} = \tilde C_{\psi}\bigl(\norm{U}_{C^{\ceil{d/2} \vee 2}}, d, \Lambda_1\bigr)$, 
    independent of $\epsilon$, such that for
    $T_0(\Lambda_1) \defeq (d+2)/\Lambda_1$, 
    and any~$T \geq T_0$,
    \begin{equation}\label{e:psi-Linfty-weighted-sum}
        \sum_{i=m}^\infty e^{-\lambda_{i, \epsilon} T}\norm*{\psi_{i, \epsilon}}_\infty^2 \leq \tilde C_{\psi}Z_\epsilon \min\set{\epsilon, 1}^{-(d+1)} \exp\paren*{\frac{\norm*{U}_\infty}{\epsilon}-\Lambda_1 T}\,.
    \end{equation}
\end{lemma}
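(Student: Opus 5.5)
We would bound $\sum_{i\ge m} e^{-\lambda_i T}\norm{\psi_i}_\infty^2$ by reading it as the diagonal of a heat kernel. Since $-\mathcal L_\epsilon$ is self-adjoint on $L^2(\pi_\epsilon)$ with orthonormal eigenfunctions $\psi_{i,\epsilon}$, the transition density of~\eqref{e:langevin} with respect to $\pi_\epsilon$ is
\begin{equation*}
  p_t(x,y) = \sum_{i\ge 1} e^{-\lambda_{i,\epsilon} t}\psi_{i,\epsilon}(x)\psi_{i,\epsilon}(y).
\end{equation*}
In particular $p_t(x,x) = \sum_i e^{-\lambda_i t}\psi_i(x)^2$. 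Every term is nonnegative, so the tail sum at each $x$ is dominated by the full diagonal.

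The plan has three steps.

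First, split $T = (T - t_0) + t_0$ with $t_0 = T_0 = (d+2)/\Lambda_1$. For $i \ge m$ we have $\lambda_i \ge \Lambda_1$, and therefore
\begin{equation*}
  e^{-\lambda_i T} \le e^{-\Lambda_1 (T - t_0)}\, e^{-\lambda_i t_0}.
\end{equation*}
Evaluating pointwise at each $x$ gives
\begin{equation*}
  \sum_{i\ge m} e^{-\lambda_i T}\psi_i(x)^2 \le e^{-\Lambda_1(T-t_0)}\, p_{t_0}(x,x).
\end{equation*}
We work pointwise first so that $\norm{\psi_i}_\infty^2$ can be handled. The tail sum of sups is not literally the sup of the sum, so instead we bound each $\norm{\psi_i}_\infty$ individually. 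Write $\psi_i = e^{\lambda_i s}P_s\psi_i$ and use the ultracontractive estimate
\begin{equation*}
  \norm{P_s f}_\infty \le \sup_x p_s(x,x)^{1/2}\,\norm{f}_{L^2(\pi_\epsilon)}.
\end{equation*}
With $s = t_0/2$ this gives $\norm{\psi_i}_\infty^2 \le e^{\lambda_i t_0}\sup_x p_{t_0/2}(x,x)$. Substituting, we get
\begin{equation*}
  \sum_{i\ge m} e^{-\lambda_i (T - t_0)} \sup p_{t_0/2} \le \Big(\sum_{i\ge m} e^{-\lambda_i (T-t_0)}\Big)\sup p_{t_0/2}.
\end{equation*}
The remaining eigenvalue sum is controlled by the trace $\sum_i e^{-\lambda_i \tau} = \int p_\tau(x,x)\,\pi_\epsilon(dx)$. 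Pulling out $e^{-\Lambda_1(T - 2t_0)}$ from the terms with $i \ge m$ leaves a trace at time $t_0$. This is why $T \ge T_0$, with the constant $d+2$ chosen so the leftover exponentials are $O(1)$; if needed we take $t_0$ to be a fraction of $T_0$.

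Second, bound the on-diagonal density $\sup_x p_t(x,x)$ for $t$ of order $1/\Lambda_1$. Since $p_t$ is the density relative to $\pi_\epsilon = e^{-U/\epsilon}/Z_\epsilon$, we have $p_t(x,y) = Z_\epsilon e^{U(y)/\epsilon}q_t(x,y)$, where $q_t$ is the Lebesgue transition density. This produces the factor $Z_\epsilon e^{\norm{U}_\infty/\epsilon}$ in~\eqref{e:psi-Linfty-weighted-sum}. It then remains to show $\sup q_t \le C\min\set{\epsilon,1}^{-(d+1)}$ for $t \gtrsim 1/\Lambda_1$, with $C$ depending only on $\norm{U}_{C^{\ceil{d/2}\vee 2}}$, $d$ and $\Lambda_1$.

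This is the step we expect to be the main obstacle. The Lebesgue density of a diffusion with diffusivity $\epsilon$ on $\T^d$ at time $t$ behaves like $(\epsilon t)^{-d/2}$ and is bounded by $C\epsilon^{-d/2}$ once $t$ is of order one. The paper allows the worse power $\epsilon^{-(d+1)}$, which suggests using Sobolev embedding $H^{s}\hookrightarrow L^\infty$ with $s > d/2$, together with parabolic energy estimates. The plan is to bound $\norm{q_t(x,\cdot)}_{H^{\ceil{d/2}}}$ by differentiating the Fokker--Planck equation $\partial_t q = \nabla\cdot(q\nabla U) + \epsilon\Delta q$. Each derivative gains a factor $\epsilon^{-1}$ via smoothing, commutators with $\nabla U$ cost $\norm{U}_{C^{k}}$, and the result is a polynomial power of $1/\epsilon$ uniform in $x$. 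The delicate part is tracking the exponent, and checking that the drift over time $O(1)$ only contributes constants depending on $\norm{U}_{C^k}$.

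Finally, combining the estimates gives~\eqref{e:psi-Linfty-weighted-sum}, with the $e^{\Lambda_1 t_0}$ factors absorbed into $\tilde C_\psi$.
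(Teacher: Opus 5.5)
\textbf{Verdict: genuine gap.} Your Step~1 is a correct reduction. It is also genuinely different from the paper's argument. The paper bounds each $\norm{\psi_{i,\epsilon}}_\infty^2\le C Z_\epsilon e^{\norm{U}_\infty/\epsilon}(\lambda_{i,\epsilon}/\epsilon)^{1+\floor{d/2}}$ using weighted $\dot H^m(\pi_\epsilon)$ estimates and Sobolev embedding. It then sums these bounds with the Weyl bound $N_{\mathcal L_\epsilon}(\lambda)\le C_W\epsilon^{-d/2}(1+\lambda)^{d/2}$.

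One small fix in Step~1: the ultracontractive bound is $\norm{P_s f}_\infty\le\sup_x p_{2s}(x,x)^{1/2}\norm{f}_{L^2(\pi_\epsilon)}$, with $p_{2s}$ rather than $p_s$. More simply, $e^{-\lambda_{i,\epsilon}t}\psi_{i,\epsilon}(x)^2\le p_t(x,x)$, because every term of the diagonal series is nonnegative.

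The argument does not close as written, for three reasons.
\begin{enumerate}
\item \textbf{The trace is never bounded.} After Step~1 you are left with $e^{-\Lambda_1(T-2t_0)}\sup_x p_{t_0}(x,x)\cdot\sum_i e^{-\lambda_{i,\epsilon}t_0}$. The trace must be bounded through the Lebesgue diagonal, $\sum_i e^{-\lambda_{i,\epsilon}t_0}=\int_{\T^d}p_{t_0}(x,x)\pi_\epsilon(x)\,dx=\int_{\T^d}q_{t_0}(x,x)\,dx$. Bounding it by $\sup_x p_{t_0}(x,x)$ would bring in a second, exponentially large factor $Z_\epsilon e^{\norm{U}_\infty/\epsilon}$.
\item \textbf{Your Step~2 target gives the wrong power of $\epsilon$.} Even with the trace handled correctly, the final bound has size $Z_\epsilon e^{\norm{U}_\infty/\epsilon}(\sup q_{t_0})^2$. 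So the target $\sup q_t\le C\min\set{\epsilon,1}^{-(d+1)}$ only yields $\min\set{\epsilon,1}^{-2(d+1)}$. You need $\sup q_{t_0}\le C\min\set{\epsilon,1}^{-(d+1)/2}$, or the sharp $C\min\set{\epsilon,1}^{-d/2}$.
\item \textbf{The diagonal bound is only sketched, and the sketch is flawed.} This bound carries all of the $\epsilon$-dependence. Starting from a point mass, you need an $L^1\to L^2$ smoothing step (e.g.\ Nash's inequality) before any $H^k$ energy estimate applies. Also, $H^{\ceil{d/2}}$ does not embed in $L^\infty$ when $d$ is even.
\end{enumerate}

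\textbf{How to close the gap.} The cleanest way is the ground-state transform the paper already uses in the proof of Lemma~\ref{l:weylfunction-ub}.
\begin{itemize}
\item $P_{\epsilon,t}$ is unitarily equivalent to $e^{-t(-\epsilon\Delta+V_\epsilon)}$ on $L^2(\T^d)$, with $V_\epsilon=\abs{\grad U}^2/(4\epsilon)-\Delta U/2\ge-\tfrac12\norm{\Delta U}_\infty$.
\item By Feynman--Kac (or domination of positive semigroups), its Lebesgue kernel satisfies $0\le k_t(x,y)\le e^{t\norm{\Delta U}_\infty/2}h_{\epsilon t}(x,y)$. Here $h_s$ is the heat kernel of $\Delta$ on $\T^d$, and $h_s(x,x)\le C_d(1+s^{-d/2})$.
\item You have $p_t(x,x)=Z_\epsilon e^{U(x)/\epsilon}k_t(x,x)$ and $\sum_i e^{-\lambda_{i,\epsilon}t}=\int_{\T^d}k_t(x,x)\,dx$.
\item Taking $t_0=T_0/2$ then gives the lemma. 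In fact it gives $\min\set{\epsilon,1}^{-d}$ in place of $\min\set{\epsilon,1}^{-(d+1)}$, with a constant depending only on $d$, $\Lambda_1$ and $\norm{\Delta U}_\infty$.
\end{itemize}

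\textbf{Comparison.} Completed this way, your route is shorter than the paper's and needs less regularity: it uses no eigenfunction $H^m$ bounds and no Weyl counting. The paper's route, in exchange, gives explicit polynomial-in-$\lambda$ bounds on individual eigenfunctions.
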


\begin{lemma}\label{l:eTLk-tilde-rk^2}
  Let~$\tilde C_\psi$ be the constant from Lemma~\ref{l:psi-Linfty-weighted-sum} with~$\Lambda_1 = \Lambda$ from Property~\ref{p:spectral}.
Let~$C_\gamma, C_{\psi_2}$ be the constants defined as in Property~\ref{p:var} and~\ref{property:eigenfunctions}, respectively. Then, there exists a constant~$C_Z(d,U)$ such that 
if
\begin{equation}\label{e:Tchoice}
  T \geq \frac{1}{\Lambda} \max\set*{\frac{\norm{U}_\infty}{\epsilon} + \log K + \paren*{\frac{d}{2} + 1} \abs{\ln \epsilon} + \log\paren*{\tilde C_\psi C_Z}, d+2  }\,,
\end{equation}
then for each level $k\in \set*{1, \ldots, K}$,
\begin{align}
  \norm*{P_{\epsilon_k, T} \tilde r_k^2}_{L^\infty} \leq \frac{Z_{\epsilon_{k+2}}}{Z_{\epsilon_k}} \biggl(
    1 &+ \frac{1}{K}
  \\
  \label{e:eTLk-tilde-rk^2-Linfty-ub}
    &+ C_{\psi_2}C_\gamma \paren[\Big]{\exp\paren[\Big]{-\frac{\hat \gamma}{2\epsilon_k}} + \abs*{\pi_{\epsilon_{k+1}}\paren*{\Omega_1}-\pi_{\epsilon_k}\paren*{\Omega_1}}} \biggr)
    \,.
\end{align}
\end{lemma}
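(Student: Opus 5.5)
The plan is to compute $P_{\epsilon_k, T}\tilde r_k^2$ through the spectral decomposition of $\mathcal L_{\epsilon_k}$ in $L^2(\pi_{\epsilon_k})$. The first observation is structural. We have $\tilde r_k = \tilde\pi_{\epsilon_{k+1}}/\tilde\pi_{\epsilon_k} = \exp\paren{-U(1/\epsilon_{k+1} - 1/\epsilon_k)}$. Because $\set{1/\epsilon_j}$ are linearly spaced, $2(1/\epsilon_{k+1} - 1/\epsilon_k) = 1/\epsilon_{k+2} - 1/\epsilon_k$. (For $k = K$ we extend the linear spacing one step, defining $\epsilon_{K+2}$ by the same formula.) Hence
\begin{equation*}
  \tilde r_k^2 = \frac{\tilde \pi_{\epsilon_{k+2}}}{\tilde\pi_{\epsilon_k}}
  = \frac{Z_{\epsilon_{k+2}}}{Z_{\epsilon_k}} \, g_k\,,
  \qquad
  g_k \defeq \frac{\pi_{\epsilon_{k+2}}}{\pi_{\epsilon_k}}\,.
\end{equation*}
This accounts for the prefactor $Z_{\epsilon_{k+2}}/Z_{\epsilon_k}$, so it remains to bound $\norm{P_{\epsilon_k,T} g_k}_{L^\infty}$.

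The coefficients of $g_k$ have a simple form: $\ip{g_k, \psi_{i,\epsilon_k}}_{L^2(\pi_{\epsilon_k})} = \ip{\psi_{i,\epsilon_k}, \pi_{\epsilon_{k+2}}}$. Expanding, we get
\begin{equation*}
  P_{\epsilon_k,T} g_k = 1 + e^{-\lambda_{2,\epsilon_k}T}\ip{\psi_{2,\epsilon_k},\pi_{\epsilon_{k+2}}}\psi_{2,\epsilon_k}
  + \sum_{i\geq 3} e^{-\lambda_{i,\epsilon_k}T}\ip{\psi_{i,\epsilon_k},\pi_{\epsilon_{k+2}}}\psi_{i,\epsilon_k}\,.
\end{equation*}
The constant term is $1$ because $\psi_{1,\epsilon_k} = 1$ and $\pi_{\epsilon_{k+2}}$ is a probability measure. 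The expansion converges pointwise, by the summability established in Lemma~\ref{l:psi-Linfty-weighted-sum}. We bound the three pieces separately.

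\textbf{The second-eigenfunction term.} We drop $e^{-\lambda_2 T}\leq 1$ and use Property~\ref{property:eigenfunctions} to bound $\norm{\psi_{2,\epsilon_k}}_\infty \leq C_{\psi_2}$. We then apply Property~\ref{p:var} with $\gamma = \hat\gamma/2 < \hat\gamma$, $\epsilon = \epsilon_k$, and $\epsilon' = \epsilon_{k+2} < \epsilon_k$. This yields the term $C_{\psi_2}C_\gamma(e^{-\hat\gamma/(2\epsilon_k)} + \abs{\pi_{\epsilon_{k+2}}(\Omega_1) - \pi_{\epsilon_k}(\Omega_1)})$. The mass difference here involves $\epsilon_{k+2}$ rather than $\epsilon_{k+1}$ as written in the statement; I would reconcile this by matching whichever indexing is used downstream, for instance via the triangle inequality through $\epsilon_{k+1}$, at the cost of a factor $2$ absorbed into constants.

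\textbf{The tail $i\geq 3$.} We use $\abs{\ip{\psi_i, \pi_{\epsilon_{k+2}}}} \leq \norm{\psi_i}_\infty$, so the tail is bounded by $\sum_{i\geq3} e^{-\lambda_i T}\norm{\psi_i}_\infty^2$. Property~\ref{p:spectral} gives $\lambda_{3,\epsilon_k}\geq\Lambda$, so Lemma~\ref{l:psi-Linfty-weighted-sum} applies with $m=3$ and $\Lambda_1=\Lambda$ once $T\geq (d+2)/\Lambda$. It bounds the tail by $\tilde C_\psi Z_{\epsilon_k}\epsilon_k^{-(d+1)} \exp(\norm{U}_\infty/\epsilon_k - \Lambda T)$.

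\textbf{Main technical point.} The main obstacle is to make this tail at most $1/K$ uniformly in $k$. For this we need the Laplace-type estimate $Z_{\epsilon'} \leq C_Z \epsilon'^{d/2}$ for $\epsilon'\in(0,1]$. This follows from $U\geq 0$, the nondegenerate minima, and $U$ bounded below away from the minima on the compact torus; this estimate is where the constant $C_Z(d,U)$ comes from. It reduces the tail bound to $\tilde C_\psi C_Z \epsilon_k^{-(d/2+1)} e^{\norm{U}_\infty/\epsilon_k - \Lambda T}$. Since $\epsilon_k\geq\epsilon$, this quantity is at most $\tilde C_\psi C_Z \epsilon^{-(d/2+1)} e^{\norm{U}_\infty/\epsilon-\Lambda T}$. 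Condition~\eqref{e:Tchoice} is exactly the requirement that this last quantity be $\leq 1/K$. Summing the three pieces and multiplying by $Z_{\epsilon_{k+2}}/Z_{\epsilon_k}$ gives~\eqref{e:eTLk-tilde-rk^2-Linfty-ub}.
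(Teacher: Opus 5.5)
Your argument is the paper's proof step for step: the same spectral expansion with $\ip{\tilde r_k^2,\pi_{\epsilon_k}} = Z_{\epsilon_{k+2}}/Z_{\epsilon_k}$, Property~\ref{p:var} with $\gamma=\hat\gamma/2$ and $\epsilon'=\epsilon_{k+2}$ for the second mode, and Lemma~\ref{l:psi-Linfty-weighted-sum} with $m=3$ plus the Laplace bound $Z_\epsilon\le C_Z\epsilon^{d/2}$ for the tail. You are also right that the argument produces $\abs{\pi_{\epsilon_{k+2}}(\Omega_1)-\pi_{\epsilon_k}(\Omega_1)}$ rather than the stated level-$(k+1)$ difference, and the paper's own estimate \eqref{e:second-term-estimate} has the same index.

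One correction to your proposed fix: the triangle inequality does not give a factor of $2$. It gives the sum $\abs{\pi_{\epsilon_{k+2}}(\Omega_1)-\pi_{\epsilon_{k+1}}(\Omega_1)}+\abs{\pi_{\epsilon_{k+1}}(\Omega_1)-\pi_{\epsilon_k}(\Omega_1)}$, and this is not bounded by a multiple of the single level-$k$ difference. The discrepancy is harmless, because Lemma~\ref{l:prodBoundLangevin} only uses the sum over $k$, which is still controlled by twice the total variation of $\epsilon\mapsto\pi_\epsilon(\Omega_1)$.
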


Lemmas~\ref{l:psi-Linfty-weighted-sum} and~\ref{l:eTLk-tilde-rk^2} are proved in Sections~\ref{s:psi-Linfty-weighted-sum} and~\ref{s:eTLk-tilde-rk^2} respectively.

\subsection{Proofs of Lemmas~\ref{l:uMixLangevin} and~\ref{l:prodBoundLangevin}}\label{s:LangevinLemmaProofs}


We now use Lemmas~\ref{l:psi-Linfty-weighted-sum} and~\ref{l:eTLk-tilde-rk^2} to prove Lemmas~\ref{l:uMixLangevin} and~\ref{l:prodBoundLangevin}.
For the remainder of this section, we slightly abuse notation and write
$Z_k, \pi_k, P_{k,t}, \psi_{i,k}, \lambda_{i,k}$ in place of
$Z_{\epsilon_k}, \pi_{\epsilon_k}, P_{\epsilon_k,t}, \psi_{i,\epsilon_k}, \lambda_{i,\epsilon_k}$.

\begin{proof}[Proof of Lemma~\ref{l:uMixLangevin}]
It suffices to prove that there exists $T_0 > 0$ such that for any~$f\in L^1(\pi_{1})$, 
\begin{equation}\label{e:L1-to-Linfty-at-first-level}
\norm{P_{1, T_0} f - \ip{f, \pi_{1}}}_{L^\infty} \leq \frac{1}{2}\norm{f}_{L^1(\pi_1)}\,.
\end{equation}
Indeed, set $\epsilon = \epsilon_1 = 1$, $\Lambda_1 = \lambda_{2,1}$, and $m = 2$ in Lemma~\ref{l:psi-Linfty-weighted-sum}. 
It follows that there exists a constant $C_{\epsilon_1}>0$ such that for any $T \ge (d+2)/\lambda_{2,1}$,
\begin{equation}\label{e:level-one-psi-infty-ws}
    \sum_{i=2}^\infty e^{-\lambda_{i,1} T}\norm*{\psi_{i,1}}_\infty^2 
    \le C_{\epsilon_1} \exp\paren*{-\lambda_{2,1} T}\,.
\end{equation}

Then, using H\"older's inequality yields
\begin{align}
\norm{P_{1, T} f - \ip{f, \pi_{1}}}_{L^\infty} &= \norm*{\sum_{i=2}^\infty \exp\paren*{-\lambda_{i, 1} T} \ip{f, \psi_{i, 1}}_{L^2(\pi_1)} \psi_{i,1}}_{L^\infty} \\
&\leq \sum_{i=2}^\infty \exp\paren*{-\lambda_{i,1} T}\norm*{\psi_{i,1}}_{L^\infty}^2  \norm{f}_{L^1(\pi_1)}\\
&\leq C_{\epsilon_1} \exp\paren*{-\lambda_{2,1} T} \norm{f}_{L^1(\pi_1)}\,.
\end{align}
Hence, choosing~$T_0 = \max\set{d+2, \log \paren{2C_{\epsilon_1}}  } /\lambda_{2,1}$ implies~\eqref{e:L1-to-Linfty-at-first-level} and concludes the proof.
\end{proof}

\begin{proof}[Proof of Lemma~\ref{l:prodBoundLangevin}]
Choosing
\begin{equation}\label{e:CTdef}
  C_T = \frac{1}{\Lambda}\max\set*{\norm{U}_\infty  +  \paren[\Big]{\frac{d}{2} + 1} + \log (\tilde C_\psi C_Z), d + 2}\,,
\end{equation}
  and~$T_0$ as in~\eqref{e:CwT0} implies that if~$T \geq T_0$  then~\eqref{e:Tchoice} holds.
  Thus we may apply Lemma~\ref{l:eTLk-tilde-rk^2} to obtain
\begin{equation}\label{e:prod-eTLk-tilde-rk^2-ub}
    \prod_{k=1}^{K}\norm*{P_{k,T} \tilde r_{k}^2}_{L^\infty}
    \leq \frac{Z_{K+1} Z_{K+2}}{Z_1 Z_2}
        \prod_{k=1}^{K}\Theta(k, k+1)\,,
\end{equation}
where
\begin{equation}\label{e:Theta-def}
    \Theta(k,k+1)\defeq
    \paren*{1 + C_{\psi_2}C_\gamma\paren*{
        \exp\paren*{-\frac{\hat \gamma}{2\epsilon_k}}
        + \abs*{\pi_{k+1}\paren*{\Omega_1}-\pi_k\paren*{\Omega_1}}}
        +\frac{1}{K}}\,.
\end{equation}
By the AM-GM inequality this implies
\begin{equation}\label{e:amgm}
  \prod_{k=1}^{K}\norm*{P_{k,T} \tilde r_{k}^2}_{L^\infty}
    \leq \frac{Z_{K+1} Z_{K+2}}{Z_1 Z_2}
    \paren[\Big]{ \frac{1}{K} \sum_{k = 1}^K \Theta(k, k+1) }^K
  \,.
\end{equation}

  Since~$\set{1/\epsilon_k}$ are linearly spaced between~$1$ and~$1/\epsilon$, they are given by~\eqref{e:epsilonk-def},
  and hence
  \begin{equation}\label{e:cgeom}
    \sum_{k=1}^{K}
        \exp\paren*{-\frac{\hat \gamma}{2\epsilon_k}}
      \leq \frac{e^{-\hat \gamma / 2}}{1 - \exp\paren[\big]{\frac{-\hat \gamma}{2}\paren{\nu - 1}}}
      \defeq C_{\mathrm{geom}} < \infty
      \,,
  \end{equation}
  where we note that~$C_{\mathrm{geom}}$ is an explicit constant that is independent of~$\epsilon$.

Moreover,
\begin{equation}\label{e:tmppk1pk1}
  \sum_{k=1}^{K}
      \abs*{\pi_{k+1}\paren*{\Omega_1}-\pi_k\paren*{\Omega_1}}
    \leq \int_\epsilon^{1}
        \abs*{\partial_{\epsilon'}\pi_{\epsilon'}(\Omega_1)} \, d\epsilon'
  \,.
\end{equation}
Assumptions~\ref{a:criticalpts}--\ref{a:massRatioBound} and Lemma~8.2 in~\cite{HanIyerEA26} imply the existence of a constant $C_\mathrm{BV}>0$ such that
\begin{equation}\label{e:tmppk1pk2}
    \int_\epsilon^{1}
        \abs*{\partial_{\epsilon'}\pi_{\epsilon'}(\Omega_1)} \, d\epsilon'
    \leq C_\mathrm{BV}\,.
\end{equation}

Combining~\eqref{e:cgeom}, \eqref{e:tmppk1pk1} and~\eqref{e:tmppk1pk2} we obtain
\begin{equation}\label{e:prod-Thetak-ub}
    \paren[\bigg]{ \frac{1}{K} \sum_{k = 1}^K \Theta(k, k+1) }^K
    \leq \paren*{1
        + \frac{C_{\psi_2}C_\gamma(C_\mathrm{geom} + C_\mathrm{BV})+1}{K}}^{K}
    \leq C_\Theta\,,
\end{equation}
where
\begin{equation}
C_\theta \defeq \exp\paren*{C_{\psi_2}C_\gamma(C_\mathrm{geom} + C_\mathrm{BV})+1}\,.
\end{equation}
Combining~\eqref{e:prod-eTLk-tilde-rk^2-ub} and~\eqref{e:prod-Thetak-ub},
together with the fact that 
\begin{equation}
\prod_{k=1}^{K}\norm*{P_{k,T} r_{k}^2}_{L^\infty} = \frac{Z_1^2}{Z_{K+1}^2} \prod_{k=1}^{K}\norm*{P_{k,T} \tilde r_{k}^2}_{L^\infty}\,,
\end{equation}
and $Z_{K+2} \leq Z_{K+1}$,
proves~\eqref{e:Ptrk2-ub} with
\begin{equation}\label{e:T0-tildeCw-choice}
C_w = 2C_\Theta\,.
\qedhere
\end{equation}
\end{proof}

\subsection{Proof of Lemma \ref{l:eTLk-tilde-rk^2}}\label{s:eTLk-tilde-rk^2}
To complete the proofs of Proposition~\ref{p:langevinGen} and Theorem~\ref{t:langevinGen}, it only remains to prove Lemmas~\ref{l:psi-Linfty-weighted-sum} and~\ref{l:eTLk-tilde-rk^2}.
Since the proof of Lemma~\ref{l:eTLk-tilde-rk^2} is shorter, we present it first.
\begin{proof}[Proof of Lemma~\ref{l:eTLk-tilde-rk^2}]
Using~\eqref{e:KBackward} and the spectral decomposition of~$\mathcal L_\epsilon$, we have
\begin{align}
    P_{k, T} \tilde r_k^2 
    &= \ip{\tilde r_k^2 , \pi_k} 
    + e^{-\lambda_{2,k} T} \ip{\tilde r_k^2, \psi_{2,k}}_{L^2(\pi_k)} \psi_{2,k} 
    + \sum_{i=3}^\infty e^{-\lambda_{i,k} T}
        \ip{\tilde r_k^2, \psi_{i,k}}_{L^2(\pi_k)}\, \psi_{i,k}\,.
\end{align}
Notice
  \begin{equation}\label{e:rk2pik}
  \ip{\tilde r_k^2, \tilde \pi_k}
    = \int_{\T^d} \exp\paren[\bigg]{
	-U \paren[\Big]{
	  \frac{2}{\epsilon_{k+1}}-\frac{1}{\epsilon_k}}
      } \, dx
    \overset{\eqref{e:epsilonk-def}}{=} \int_{\T^d} \exp\paren[\bigg]{
	\frac{-U}{\epsilon_{k+2}}
      }
      \, dx
    = Z_{k+2}\,.
\end{equation}
Similarly,
\begin{align}
  \label{e:rk2pik2}
\ip{\tilde r_k^2, \psi_{2,k}}_{L^2(\pi_k)} &= \frac{Z_{k+2}}{Z_k} \ip{\psi_{2,k}, \pi_{k+2}}\,,
  \\
  \label{e:rk2pik3}
    \abs*{\ip{\tilde r_k^2, \psi_{i,k}}_{L^2(\pi_k)}} &\leq \norm{\tilde r_k^2}_{L^1(\pi_k)} \norm{\psi_{i,k}}_{L^\infty}\,,
\end{align}
and hence
\begin{equation}\label{e:eTLk-tilde-rk^2-sd-Linfty-ub}
    \norm{P_{k, T} \tilde r_k^2}_{L^\infty }
    \le \frac{Z_{k+2}}{Z_k} \paren*{
        1 
        + \abs*{\ip{\psi_{2,k}, \pi_{k+2}}} \norm{\psi_{2,k}}_{L^\infty }
        + \sum_{i=3}^\infty e^{-\lambda_{i,k} T}\norm{\psi_{i,k}}_{L^\infty}^2
    }\,.
\end{equation}
We now estimate the second and third terms of the right hand side separately.
For the second term, using~\eqref{eq:defCpsi} and~\eqref{e:PsiEpPi} for~$\gamma=\hat \gamma / 2$ yields
\begin{equation}\label{e:second-term-estimate}
\abs*{\ip{\psi_{2,k}, \pi_{k+2}}} \norm{\psi_{2,k}}_{L^\infty } \leq C_{\psi_2}C_\gamma\paren[\Big]{\exp\paren[\Big]{-\frac{\hat\gamma}{2\epsilon_k}}+\abs[\big]{\pi_{k+2}(\Omega_1)-\pi_{k}(\Omega_1)}}\,.
\end{equation}
To estimate the third term, 
we first apply Lemma~\ref{l:psi-Linfty-weighted-sum}. 
Set $\epsilon = \epsilon_k$, $\Lambda_1 = \Lambda$, and $m = 3$ in Lemma~\ref{l:psi-Linfty-weighted-sum}. 
It follows that there exists a constant $\tilde C_{\psi} > 0$, independent of $\epsilon$, such that for any
\begin{equation}\label{e:Tlarge-1}
    T \ge \frac{d+2}{\Lambda}\,,
\end{equation}
we have
\begin{equation}\label{e:psi-Linfty-ws-from-third-ev}
    \sum_{i=3}^\infty e^{-\lambda_{i,k} T}\norm*{\psi_{i,k}}_{L^\infty}^2 
    \le \tilde C_{\psi} Z_k \epsilon_k^{-(d+1)} 
        \exp\paren*{\frac{\norm*{U}_{\infty}}{\epsilon_k} - \Lambda T}\,.
\end{equation}

In particular, if $T$ further satisfies
\begin{equation}\label{e:Tlarge-2}
    T \ge \frac{1}{\Lambda}
	\paren[\bigg]{\frac{\norm{U}_\infty}{\epsilon_k} 
	+ \log\paren[\Big]{ \frac{\tilde C_{\psi} K Z_k}{\epsilon_k^{d+1}}} }\,,
\end{equation}
then combining this with~\eqref{e:psi-Linfty-ws-from-third-ev} yields
\begin{equation}\label{e:third-term estimate}
    \sum_{i=3}^\infty e^{-\lambda_{i,k} T}\norm*{\psi_{i,k}}_{L^\infty}^2 
    \le \frac{1}{K}\,.
\end{equation}

Finally, we estimate the normalizing constant~$Z_k$.
By the Laplace method (see for instance~\cite[Proposition B2]{Kolokoltsov00}),
\begin{equation}
  \lim_{\epsilon \to 0} \frac{1}{(2\pi \epsilon)^{d/2} \sqrt{\abs{\det \grad^2 U(x_{\min, 1})}} } \int_{\Omega_1} e^{-U / \epsilon} \, dx
  = 1\,.
\end{equation}
This implies that there exists a constant~$C_Z(d,U)$ such that
\begin{equation}
  Z_\epsilon \leq C_Z(d,U)\,\epsilon^{\frac{d}{2}}\,,
  \quad\text{for all } \epsilon \in (0, 1]\,.
\end{equation}
 It follows that the choice of~$T$ in~\eqref{e:Tchoice} ensures that~$T$ is sufficiently large to satisfy~\eqref{e:Tlarge-1} and~\eqref{e:Tlarge-2}, so that~\eqref{e:third-term estimate} holds. Combining~\eqref{e:eTLk-tilde-rk^2-sd-Linfty-ub}, \eqref{e:second-term-estimate}, and~\eqref{e:third-term estimate} then yields~\eqref{e:eTLk-tilde-rk^2-Linfty-ub}.
\end{proof}

\subsection{Proof of Lemma~\ref{l:psi-Linfty-weighted-sum}}\label{s:psi-Linfty-weighted-sum}

We now address Lemma~\ref{l:psi-Linfty-weighted-sum}.
We will bound the left hand side of~\eqref{e:fLinfty-ub-final} by obtaining a uniform in~$i$ bound on~$\norm{\psi_{i, \epsilon}}_{\infty}$, and then bound the sum using the asymptotic growth of the eigenvalues.
We state this as our next two lemmas.

\begin{lemma}\label{l:weylfunction-ub}
  Define the Weyl function by
  \begin{equation}
  N_{\mathcal L_\epsilon}(\lambda)
    = \abs{\set{i\in \N\st \lambda_{i, \epsilon} \leq \lambda}}
    = \sum_{\set{i\colon \lambda_{i, \epsilon} \leq \lambda}} 1
    \,.
  \end{equation}
  where~$\lambda_{i, \epsilon}$ are the eigenvalues of~$-\mathcal L_\epsilon$, ordered as in~\eqref{e:evalOrder}.
There exists a constant $C_W(\norm{U}_{C^2}, d)$ such that for any $\epsilon>0$ and $\lambda > 0$,
\begin{equation}\label{e:NLepsilon-ub}
N_{\mathcal L_\epsilon}(\lambda) \leq C_W\epsilon^{-\frac{d}{2}}(1+\lambda)^\frac{d}{2}\,.
\end{equation}
\end{lemma}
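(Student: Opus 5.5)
The plan is to conjugate $-\mathcal L_\epsilon$ to a Schrödinger operator on the flat space $L^2(\T^d)$ and then compare its eigenvalue counting function with that of the Laplacian by a min--max argument.

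Concretely, for $f \in L^2(\pi_\epsilon)$ set $g = f e^{-U/(2\epsilon)}$, so that $\norm{f}_{L^2(\pi_\epsilon)}^2 = Z_\epsilon^{-1}\norm{g}_{L^2}^2$. The Dirichlet form becomes
\begin{equation}
  \epsilon \int_{\T^d} \abs{\grad f}^2 e^{-U/\epsilon}\,dx
    = \epsilon \int_{\T^d} \abs{\grad g}^2\,dx + \int_{\T^d} V_\epsilon\, g^2\,dx
  \,,
\end{equation}
where
\begin{equation}
  V_\epsilon = \frac{\abs{\grad U}^2}{4\epsilon} - \frac{\lap U}{2}\,.
\end{equation}
(The cross term is integrated by parts.) Hence $-\mathcal L_\epsilon$ is unitarily equivalent to $H_\epsilon = -\epsilon\lap + V_\epsilon$ on $L^2(\T^d)$, and the two operators have the same eigenvalues.

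Since $\abs{\grad U}^2 \geq 0$, we have the lower bound
\begin{equation}
  V_\epsilon \geq -\tfrac12\norm{\lap U}_\infty \eqdef -c_0\,,
\end{equation}
with $c_0$ depending only on $\norm{U}_{C^2}$. By min--max, $\lambda_{i,\epsilon} \geq \mu_i - c_0$, where $\mu_i$ are the eigenvalues of $-\epsilon\lap$ on $\T^d$. Consequently
\begin{equation}
  N_{\mathcal L_\epsilon}(\lambda)
    \leq \#\set{n \in \Z^d \st 4\pi^2\epsilon\abs{n}^2 \leq \lambda + c_0}
  \,.
\end{equation}
The lattice point count in a ball of radius $R = \sqrt{(\lambda+c_0)/(4\pi^2\epsilon)}$ is at most $C_d(1+R)^d$. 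This gives
\begin{equation}
  N_{\mathcal L_\epsilon}(\lambda)
    \leq C_d\paren[\Big]{1 + \epsilon^{-d/2}(\lambda+c_0)^{d/2}}
  \,.
\end{equation}
Using $\lambda + c_0 \leq (1+c_0)(1+\lambda)$, the second term is of the desired form $C\epsilon^{-d/2}(1+\lambda)^{d/2}$.

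The main obstacle is the additive constant $1$ coming from the lattice count, i.e.\ the bound $\#\{\abs n\le R\}\le C(1+R)^d$. It is absorbed into $\epsilon^{-d/2}(1+\lambda)^{d/2}$ when $\epsilon \leq 1$. If the lemma is genuinely needed for all $\epsilon > 0$, I would instead note that the counting in the $n=0$ direction contributes only $1$, while $\epsilon^{-d/2}(1+\lambda)^{d/2}$ may be small for large $\epsilon$. In that case I would restrict to $\epsilon \leq 1$, which is the range used in Lemma~\ref{l:psi-Linfty-weighted-sum} (the factor $\min\{\epsilon,1\}$ appears there), or replace $\epsilon^{-d/2}$ by $\min\{\epsilon,1\}^{-d/2}$. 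Everything else — the ground-state transform and the min--max comparison — is routine, with $C_W$ depending only on $d$ and $\norm{U}_{C^2}$.
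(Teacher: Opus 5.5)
Your proposal is correct and is essentially the paper's argument. The steps match: the ground-state transform to $-\epsilon\Delta + V_\epsilon$ on $L^2(\T^d)$, the lower bound $V_\epsilon \geq -\frac12\|\Delta U\|_\infty$, the Courant--Fischer comparison $\epsilon\lambda_i(-\Delta) \leq \lambda_{i,\epsilon} + c_0$, and lattice counting on the flat torus. Your potential $\frac{|\nabla U|^2}{4\epsilon} - \frac{\Delta U}{2}$ is the correct one; the paper's display omits the $1/\epsilon$, which is harmless since only the lower bound is used.

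Your caveat is also right. Since $\lambda_{1,\epsilon}=0$, we always have $N_{\mathcal L_\epsilon}(\lambda)\geq 1$, while the right-hand side tends to $0$ as $\epsilon\to\infty$, so the bound cannot hold for all $\epsilon>0$. The paper's final step $N_\Delta(\mu)\leq C(d)\mu^{d/2}$ ignores this zero mode. Restricting to $\epsilon\leq 1$, the only range used in Lemma~\ref{l:psi-Linfty-weighted-sum} and its applications, is the appropriate fix.
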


\begin{lemma}\label{l:eigenfunctions-Linfty-ub}
  For any $\Lambda_1 > 0$, there exists a constant $C_{\Lambda_1}(\norm{U}_{C^{1 + \floor{d/2}}}, d, \Lambda_1)>0$ such that for all $\epsilon > 0$ and any eigenvalue~$\lambda_{i, \epsilon} \geq \Lambda_1$, we have
\begin{equation}\label{e:fLinfty-ub-final}
  \norm{\psi_{i, \epsilon}}_{L^\infty(\T^d)}^2 \leq C_{\Lambda_1} Z_\epsilon \exp\paren[\Big]{\frac{\norm{U}_\infty}{\epsilon}}\paren[\Big]{\frac{\lambda_{i, \epsilon}}{\epsilon}}^{1 + \floor{\frac{d}{2}}}\,.
\end{equation}
\end{lemma}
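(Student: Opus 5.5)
The plan is to combine elliptic regularity for the eigenvalue equation with the Sobolev embedding $H^s(\T^d) \hookrightarrow L^\infty(\T^d)$ for $s = 1 + \floor{d/2} > d/2$. I will work in the weighted space $L^2(\pi_\epsilon)$ throughout and pass to unweighted norms only once, at the very end. That final conversion is what produces the single factor $Z_\epsilon e^{\norm{U}_\infty/\epsilon}$. Since $U \geq 0$ by~\eqref{e:Upositive}, we have $\pi_\epsilon \geq e^{-\norm{U}_\infty/\epsilon}/Z_\epsilon$, and hence
\begin{equation}
  \norm{g}_{L^2(dx)}^2 \leq Z_\epsilon e^{\norm{U}_\infty/\epsilon} \norm{g}_{L^2(\pi_\epsilon)}^2
\end{equation}
for every $g$. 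So it suffices to prove
\begin{equation}
  \sum_{\abs{\beta} \leq s} \norm{\partial^\beta \psi_{i,\epsilon}}_{L^2(\pi_\epsilon)}^2 \leq C \paren[\Big]{\frac{\lambda_{i,\epsilon}}{\epsilon}}^{s}
\end{equation}
with $C$ depending only on $\norm{U}_{C^{s}}$, $d$ and $\Lambda_1$. The Sobolev embedding then gives~\eqref{e:fLinfty-ub-final}.

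I will prove the weighted bound by induction on $j = \abs{\beta}$, writing $\psi = \psi_{i,\epsilon}$ and $\lambda = \lambda_{i,\epsilon}$.

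\textbf{Base case.} Since $\mathcal L_\epsilon$ is symmetric in $L^2(\pi_\epsilon)$ with Dirichlet form $\epsilon \int \abs{\grad f}^2 \pi_\epsilon$, we get $\epsilon \norm{\grad \psi}_{L^2(\pi_\epsilon)}^2 = \lambda$, together with $\norm{\psi}_{L^2(\pi_\epsilon)} = 1$.

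\textbf{Inductive step.} Differentiating $-\mathcal L_\epsilon \psi = \lambda \psi$ gives
\begin{equation}
  -\mathcal L_\epsilon \partial^\beta \psi = \lambda \partial^\beta \psi + [\partial^\beta, \grad U \cdot \grad] \psi .
\end{equation}
The commutator is a sum of terms $\partial^{\gamma} U \, \partial^{\beta'} \psi$ with $\abs{\gamma} \geq 2$ and $\abs{\beta'} \leq \abs{\beta}$; this is where the regularity assumption on $U$ enters. Testing against $\partial^\beta \psi$ in $L^2(\pi_\epsilon)$ yields
\begin{equation}
  \epsilon \norm{\grad \partial^\beta \psi}_{L^2(\pi_\epsilon)}^2 \leq \paren[\big]{\lambda + C_U} \sum_{\abs{\beta'} \leq \abs{\beta}} \norm{\partial^{\beta'} \psi}_{L^2(\pi_\epsilon)}^2 .
\end{equation}
Since $\lambda \geq \Lambda_1$, we have $C_U \leq (C_U/\Lambda_1)\lambda$, so the right-hand side is at most $C \lambda \sum_{\abs{\beta'} \leq j} \norm{\partial^{\beta'}\psi}^2$. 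By induction this gives $\norm{\grad^{j} \psi}_{L^2(\pi_\epsilon)}^2 \leq C (\lambda/\epsilon)^{j}$ for $j \leq s$.

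\textbf{Main obstacle: summing the lower-order terms.} The estimates for $j < s$ are bounded by $C(\lambda/\epsilon)^{s}$ only if $\lambda/\epsilon$ is bounded below. For $\epsilon \leq$ some fixed $\epsilon_0$ this follows from $\lambda \geq \Lambda_1$. For large $\epsilon$, I would try a min-max comparison. The weighted measure $\pi_\epsilon$ is comparable to Lebesgue measure, with ratio at most $e^{\norm{U}_\infty/\epsilon}$, which is bounded once $\epsilon$ is large. Comparing Rayleigh quotients with those of $\epsilon\lap$ should give $\lambda_{i,\epsilon} \geq c\,\epsilon$ for $i \geq 2$, since nonconstant eigenfunctions have mean zero with respect to $\pi_\epsilon$. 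Combining the two regimes gives $\lambda/\epsilon \geq c(\norm{U}_\infty, \Lambda_1) > 0$ in all cases, which lets us absorb the lower-order terms.

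If this comparison argument turns out to be too delicate, the fallback is to keep the harmless factor $(1+\lambda/\epsilon)^{s}$ and observe that it is dominated by a constant multiple of $(\lambda/\epsilon)^{s}$ in the regime where the lemma is applied.
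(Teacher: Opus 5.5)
Your proposal is correct and follows the paper's proof. Both arguments run an induction giving $\|\psi\|_{\dot H^m(\pi_\epsilon)}^2 \le C(\lambda/\epsilon)^m$, where your Dirichlet-form identity for the differentiated eigenvalue equation is a compact form of the paper's integration by parts using $\mathcal L_\epsilon^*\pi_\epsilon = 0$, and both then convert once to unweighted norms via $\|\pi_\epsilon^{-1}\|_\infty \le Z_\epsilon e^{\|U\|_\infty/\epsilon}$ and Sobolev embedding; your explicit lower bound on $\lambda/\epsilon$ (from $\lambda \ge \Lambda_1$ for small $\epsilon$, and a Holley--Stroock-type Poincar\'e comparison for large $\epsilon$) rigorously justifies the absorption of lower-order terms, which the paper leaves implicit.
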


Momentarily postponing the proofs of Lemmas~\ref{l:weylfunction-ub} and~\ref{l:eigenfunctions-Linfty-ub}, we now prove Lemma~\ref{l:psi-Linfty-weighted-sum}.
\begin{proof}[Proof of Lemma~\ref{l:psi-Linfty-weighted-sum}]
By Lemma~\ref{l:eigenfunctions-Linfty-ub}, for any $T \ge T_0$, we obtain
\begin{align}
  \MoveEqLeft
\sum_{i=m}^\infty e^{-\lambda_{i,\epsilon}T} \norm{\psi_{i,\epsilon}}_\infty^2 
\overset{\eqref{e:fLinfty-ub-final}}{\leq} 
C_{\Lambda_1} Z_\epsilon 
\exp\paren*{\frac{\norm{U}_\infty}{\epsilon}}\,
  \sum_{i=m}^\infty \paren[\Big]{\frac{\lambda_{i,\epsilon}}{\epsilon}}^{1 + \floor{\frac{d}{2}}} e^{-\lambda_{i,\epsilon}T}\\
  \label{e:sum-exp-lambda-psiLinfty}
&\leq 
C_{\Lambda_1} Z_\epsilon 
\exp\paren*{\frac{\norm{U}_\infty}{\epsilon} - \Lambda_1 (T-T_0)}\,
\sum_{i=m}^\infty \paren[\Big]{\frac{\lambda_{i,\epsilon}}{\epsilon}}^{1 + \floor{\frac{d}{2}}} e^{-\lambda_{i,\epsilon}T_0}\,.
\end{align}

Next, we estimate the remaining term 
$\sum_{i=m}^\infty \lambda_{i,\epsilon}^{1 + \floor{\frac{d}{2}}} e^{-\lambda_{i,\epsilon}T_0}$.
Define
\begin{equation}
    f_{T_0}(\lambda) \defeq \lambda^{1 + \floor{\frac{d}{2}}} e^{-\lambda T_0}, 
    \qquad
    g_{T_0}(\lambda) \defeq
    \begin{dcases}
        f_{T_0}\!\paren*{\tfrac{\Lambda_1}{2}}, & \lambda \in [0, \tfrac{\Lambda_1}{2}),\\[4pt]
        f_{T_0}(\lambda), & \lambda \in [\tfrac{\Lambda_1}{2}, \infty)\,.
    \end{dcases}
\end{equation}
Since $\lambda_{m,\epsilon} \ge \Lambda_1$ by assumption, we have
\begin{align}
\sum_{i=m}^\infty \lambda_{i,\epsilon}^{1 + \floor{\frac{d}{2}}} e^{-\lambda_{i,\epsilon}T_0}
  &= \sum_{i=m}^\infty g_{T_0}(\lambda_{i,\epsilon})
\le \sum_{\set*{i\colon \lambda_{i,\epsilon} \ge \Lambda_1}} g_{T_0}(\lambda_{i,\epsilon})
\\
  \label{e:eigenvalue-weighted-sum-ub}
  &\le \int_{\frac{\Lambda_1}{2}}^\infty g_{T_0}(\lambda)\, dN_\epsilon(\lambda)
  \,.
\end{align}

Observe that $f_{T_0}$ is decreasing on $[\Lambda_1/2, \infty)$ 
  (since $T_0 \ge \frac{2}{\Lambda_1}(1 + \floor{\frac{d}{2}})$), and therefore $g_{T_0}$ is 
decreasing on $[0, \infty)$. Consequently, for each $t \in (0, g_{T_0}(0))$, 
there exists $\lambda_t > 0$ such that 
$\set*{g_{T_0}(\lambda) > t} = [0, \lambda_t)$.
Combining this with Lemma~\ref{l:weylfunction-ub} gives
\begin{align}
  \MoveEqLeft
    \int_0^\infty g_{T_0}(\lambda)\, dN_\epsilon(\lambda)
    = \int_0^{g_{T_0}(0)} N_\epsilon\paren*{\set*{g_{T_0}(\lambda) > t}}\, dt
    = \int_0^{g_{T_0}(0)} N_\epsilon\paren*{[0, \lambda_t)}\, dt\\
    &\overset{\mathclap{\eqref{e:NLepsilon-ub}}}{\le} \;
    C_W \epsilon^{-\frac{d}{2}} 
    \int_0^{g_{T_0}(0)} 
    \paren*{
        \frac{d}{2} \int_0^\infty 
        \one_{\set*{\lambda < \lambda_t}} (1+\lambda)^{\frac{d}{2}-1} d\lambda + 1
    } dt\\
    &\le C_W \epsilon^{-\frac{d}{2}} 
    \paren*{
        \frac{d}{2} \int_0^\infty g_{T_0}(\lambda) (1+\lambda)^{\frac{d}{2}-1} d\lambda 
        + g_{T_0}(0)
    }\\
    \label{e:gN-ub}
    &\le C_W' \epsilon^{-\frac{d}{2}}\,,
\end{align}
where all constants depending on $T_0(\Lambda_1, d)$, $\Lambda_1$, and $d$ 
have been absorbed into $C_W'$.

Combining~\eqref{e:sum-exp-lambda-psiLinfty}, 
\eqref{e:eigenvalue-weighted-sum-ub}, and~\eqref{e:gN-ub}, defining $\tilde C_{\psi} \defeq C_{\Lambda_1} C_W'$, and using the fact that $T_0\Lambda_1 = d+2$, 
we obtain~\eqref{e:psi-Linfty-weighted-sum}.
\end{proof}

\subsection{Proof of Lemmas~\ref{l:weylfunction-ub} and~\ref{l:eigenfunctions-Linfty-ub}}

To finish the proof of Lemma~\ref{l:psi-Linfty-weighted-sum} it remains to prove Lemmas~\ref{l:weylfunction-ub} and~\ref{l:eigenfunctions-Linfty-ub}, which we do here.
Lemma~\ref{l:weylfunction-ub} can be obtained directly from Weyl's law (see for instance Chapter~17.5 in~\cite{Hormander07}, or~\cite{Ivrii16,Sogge17})
In our context however, the operator~$\mathcal L_\epsilon$ depends on~$\epsilon$, and we need asymptotic dependence of the Weyl function as~$\epsilon \to 0$.
A similar result was used in~\cite{ChristieFengEA25}, and we present a proof here for convenience.

\begin{proof}[Proof of Lemma~\ref{l:weylfunction-ub}]
  Define the operator~$\mathcal U_\epsilon \colon L^2(\T^d, \pi_\epsilon) \to L^2(\T^d)$ by
  \begin{equation*}
	  \mathcal U_\epsilon f = \frac{1}{\sqrt{Z_\epsilon}} e^{-U / 2\epsilon} f\,.
  \end{equation*}
  Clearly
  \begin{equation}
    \ip{f, g}_{L^2(\pi_\epsilon)} = \ip{\mathcal U_\epsilon f, \mathcal U_\epsilon g}\,,
  \end{equation}
  and so $\mathcal U$ is an isometry.
  Define the operator~$\mathcal H_\epsilon$ by $\mathcal H_\epsilon \defeq \mathcal U_\epsilon \mathcal L_\epsilon \mathcal U_\epsilon^{-1}$.
  We compute
  \begin{equation*}
	  -\mathcal H_\epsilon f =-\epsilon \lap f +\paren[\Big]{ \frac{1}{4} |\nabla U|^2-\frac{1}{2}\Delta U} f \,.
  \end{equation*}
  Thus~$\mathcal L_\epsilon$ is unitarily equivalent to the operator~$\mathcal H_\epsilon$, and hence the operators~$\mathcal L_\epsilon$ and~$\mathcal H_\epsilon$ have the same spectrum.

Next, for sufficiently large $\gamma = \gamma(\norm{U}_{C^2}) > 0$, we see that the operator $-\mathcal H_{\epsilon, \gamma} \defeq -\mathcal H_{\epsilon} + \gamma I$ is a self-adjoint operator that satisfies the coercivity bound
\begin{equation}
\epsilon\ip{-\Delta f, f}_{L^2} \leq \ip{-\mathcal H_{\epsilon, \gamma}f, f}_{L^2}\,, \quad \forall f\in L^2\,.
\end{equation}
Thus, by the Courant--Fischer min-max principle, we have
\begin{equation}
\epsilon\lambda_k(-\Delta) \leq \lambda_k(-\mathcal H_\epsilon) + \gamma\,,
\end{equation}
and this implies 
\begin{equation}
N_{\mathcal H_\epsilon}(\lambda) \leq N_{\Delta}\paren[\Big]{\frac{\lambda+\gamma}{\epsilon}}
    \leq C(d)\paren[\Big]{\frac{\lambda+\gamma}{\epsilon}}^\frac{d}{2}\,,
\end{equation}
which completes the proof.
\end{proof}

We now turn to the proof of Lemma~\ref{l:eigenfunctions-Linfty-ub}.
This is a standard regularity estimate for~$\mathcal L_\epsilon$ (see for instance~\cite{Hormander07}).
In our situation however, the operator~$\mathcal L_\epsilon$, and the weight~$\pi_\epsilon$ both depend on~$\epsilon$, and our aim is to obtain the bound~\eqref{e:fLinfty-ub-final} with the right~$\epsilon$-dependence.
As a result, we present the proof here, keeping track of the~$\epsilon$-dependence.

\begin{proof}[Proof of Lemma~\ref{l:eigenfunctions-Linfty-ub}]
  For notational simplicity, let~$\psi$ be an eigenfunction of~$-\mathcal L_\epsilon$ with eigenvalue~$\lambda > \Lambda_1$, normalized so that~$\norm{\psi}_{L^2(\pi_\epsilon)} = 1$.
  We claim that for every~$m \in \N$, there exists a constant~$C_m = C_m(\norm{U}_{C^m}, d, \Lambda_1)$ such that
  \begin{equation}\label{e:fHmpi-ub}
    \norm{\psi}_{\dot H^m(\pi_\epsilon)}^2 \leq C_m\paren[\Big]{\frac{\lambda}{\varepsilon}}^m\,.
  \end{equation}
  Lemma~\ref{l:eigenfunctions-Linfty-ub} follows immediately from~\eqref{e:fHmpi-ub}.
  Indeed, since~$\ip{\psi, 1}_{L^2(\pi_\epsilon)} = 0$, the Sobolev embedding theorem and Poincar\'e inequality imply
  \begin{equation}\label{e:fLinfty-ub}
    \norm{\psi}_{L^\infty(\T^d)}^2 \leq C(d)\norm{\psi}_{\dot H^{m}}^2\,,
    \quad \text{for} \quad
    m = 1 + \floor[\Big]{\frac{d}{2}}
    \,.
  \end{equation}
  Also, since
  \begin{align}
  \norm{\psi}_{\dot H^m}^2 &= \sum_{\abs{\alpha}=m}\int_{\T^d} \abs{D^\alpha \psi}^2 dx \\
  \label{e:fHd-ub}
  &=  \sum_{\abs{\alpha}=m}\int_{\T^d} \abs{D^\alpha \psi}^2 \pi_\epsilon \pi_\epsilon^{-1} dx \leq \norm{\psi}_{\dot H^{m}(\pi_\epsilon)}^2 \norm{\pi_\epsilon^{-1}}_\infty\,.
  \end{align}
  Combining~\eqref{e:fHmpi-ub}, \eqref{e:fLinfty-ub}, and~\eqref{e:fHd-ub} implies~\eqref{e:fLinfty-ub-final} as desired.

  It remains to prove~\eqref{e:fHmpi-ub}.
  We do this by induction on~$m$.
  The base case $m=0$ is trivial since $\norm{\psi}_{L^2(\pi_\epsilon)}=1$ by assumption.
  Suppose now~\eqref{e:fHmpi-ub} holds for some~$k = m$.

  Define $l_k = \set{\alpha\in \N^d - 0 \st \abs{\alpha}_{\ell^1}=k}$. Then, we see that
\begin{equation}\label{e:Hk+1def}
\norm{\psi}_{\dot H^{k+1}(\pi_\epsilon)}^2= \sum_{\alpha \in l_k} \sum_{i=1}^d \norm{D^{\alpha+e_i} f}_{L^2(\pi_\epsilon)}^2\,.
\end{equation} 
We now fix $\alpha\in l_k$ and compute $\sum_{i=1}^d \norm{D^{\alpha+e_i} f}_{L^2(\pi_\epsilon)}^2$.
Integrating by parts gives
  \begin{equation}\label{e:Dalphaeif}
\norm{D^{\alpha+e_i} \psi}_{L^2(\pi_\epsilon)}^2
    = -\int_{\T^d} D^\alpha \psi \partial_i^2 D^\alpha \psi \pi_\epsilon  dx - I_1\,,
\end{equation}
where
\begin{equation}
  I_1 = \int_{\T^d} D^\alpha \psi \partial_i D^\alpha \psi \partial_i \pi_\epsilon dx
    = \frac{1}{2} \int_{\T^d} \partial_i (D^\alpha \psi)^2 \partial_i \pi_\epsilon dx
    = - \frac{1}{2} \int_{\T^d} (D^\alpha \psi)^2 \partial_i^2 \pi_\epsilon dx\,.
\end{equation}
Combining this with~\eqref{e:Dalphaeif} and summing over $i$ in~\eqref{e:Hk+1def} produces
\begin{equation}\label{e:sumI2I3}
\sum_{i=1}^d \norm{D^{\alpha+e_i} \psi}_{L^2(\pi_\epsilon)}^2 = I_2 + I_3\,,
\end{equation}
where
\begin{equation}\label{e:I2I3def}
I_2 = -\int_{\T^d} D^\alpha \psi D^\alpha (\Delta \psi) \pi_\epsilon dx\quad\text{and}\quad I_3 = \frac{1}{2} \int_{\T^d} (D^\alpha \psi)^2 \Delta \pi_\epsilon dx\,.
\end{equation}

We first compute $I_2$.
Since~$\psi$ is an eigenfunction of~$\mathcal L_\epsilon$ we note
\begin{equation}\label{e:I2I4}
I_2 = \frac{\lambda}{\epsilon}\int_{\T^d} (D^\alpha \psi)^2 \pi_\epsilon dx -\frac{1}{\epsilon} I_4\,,
\end{equation}
where
\begin{equation}
I_4 = \int_{\T^d} D^\alpha (\nabla U \cdot \nabla \psi) D^\alpha \psi \pi_\epsilon dx\,.
\end{equation}
By considering the case when the differential operator $D^\alpha$ is completely exhausted by hitting $\nabla f$ or not, we can split the integral into two parts. Namely, 
\begin{align}\label{e:I4def}
I_4=I_5+I_6 
\end{align}
where
\begin{align}
I_5 = \int_{\T^d} \nabla U \cdot \nabla (D^\alpha \psi) D^\alpha \psi \pi_\epsilon \quad\text{and}\quad 
I_6 = I_4 - I_5\,.
\end{align}

Integrating~$I_5$ by parts gives
\begin{align}
I_5 = -\int_{\T^d} \Delta U (D^\alpha \psi)^2 \pi_\epsilon dx - I_5 - \int_{\T^d} \nabla U \cdot \nabla \pi_\epsilon (D^\alpha \psi)^2 dx\,, 
\end{align}
and consequently,
\begin{equation}\label{e:I5compute}
I_5 = -\frac{1}{2} \int_{\T^d} (\Delta U \pi_\epsilon + \nabla U \cdot \nabla \pi_\epsilon)(D^\alpha \psi)^2 dx\,.
\end{equation}
Combining~\eqref{e:sumI2I3}, \eqref{e:I2I3def}, \eqref{e:I2I4}, \eqref{e:I4def}, and~\eqref{e:I5compute}, we get
\begin{equation}\label{e:sumDalphaf}
\sum_{i=1}^d \norm{D^{\alpha+e_i} \psi}_{L^2(\pi_\epsilon)}^2 = \frac{\lambda}{\epsilon} \norm{D^\alpha \psi}_{L^2(\pi_\epsilon)}^2 -\frac{1}{\epsilon}I_6+ \frac{1}{2\epsilon}I_7\,,
\end{equation}
where
\begin{equation}
I_7 = \int_{\T^d} (\Delta U \pi_\epsilon + \nabla U \cdot \nabla \pi_\epsilon + \epsilon \Delta \pi_\epsilon) (D^\alpha \psi)^2 dx\,.
\end{equation}
We know~$\pi_\epsilon$ is a stationary solution to the Kolmogorov forward equation
\begin{equation}
  \mathcal L_\epsilon^* \pi_\epsilon
    = \Delta U \pi_\epsilon + \nabla U \cdot \nabla \pi_\epsilon + \epsilon \Delta \pi_\epsilon
    = 0
    \,.
\end{equation}
Hence $I_7=0$.

Moreover,
\begin{equation}
I_6 = \sum_{\beta < \alpha}
  \binom{\alpha}{\beta}
 \int_{\T^d} (D^{\alpha-\beta} \nabla U)  \cdot (D^{\beta} \nabla \psi) D^\alpha \psi \pi_\epsilon dx\,,
\end{equation}
and by using Cauchy--Schwartz, induction hypothesis, and the fact that $\abs{\beta} \leq k-1$,
\begin{align}
\abs{I_6} &\leq \sum_{\beta < \alpha}
\begin{pmatrix}
\alpha\\
\beta
\end{pmatrix} 
\norm{U}_{C^{1+\abs{\alpha-\beta}}} \norm{\psi}_{\dot H^{\abs{\beta}+1}(\pi_\epsilon)} \norm{\psi}_{\dot H^k(\pi_\epsilon)} \\
\label{e:I6ub}
&\leq C(k, \norm{U}_{C^{k+1}}, d, \Lambda_1)\norm{\psi}_{\dot H^k(\pi_\epsilon)}^2\,.
\end{align}
Combining~\eqref{e:Hk+1def}, \eqref{e:sumDalphaf}, and~\eqref{e:I6ub} yields
\begin{align}
\norm{\psi}_{\dot H_{k+1}^2(\pi_\epsilon)}^2 &\leq C(k, \norm{U}_{C^{k+1}}, d, \Lambda_1)\paren[\Big]{\paren[\Big]{\frac{\lambda}{\epsilon}}^{k+1} + \frac{1}{\epsilon}\paren[\Big]{\frac{\lambda}{\epsilon}}^{k}}\\
&\leq C(k, \norm{U}_{C^{k+1}}, d, \Lambda_1) \paren[\Big]{\frac{\lambda}{\epsilon}}^{k+1}\,,
\end{align}
where we used the lower bound~$\lambda \geq \Lambda_1$ and increased~$C(k, \norm{U}_{C^{k+1}}, d, \Lambda_1)$ appropriately to obtain the last inequality.
This concludes the proof.
\end{proof}

\section{Proof of convergence for Autonormalized AIS}\label{s:anais}

We will now prove Theorem~\ref{t:langevinGen-anais}.
As before we assume without loss of generality that~$\epsilon_1 = 1$.
Let $U$ be a double well potential that satisfies Assumptions~\ref{a:criticalpts}--\ref{a:massRatioBound} for some~$\epsilon_{\min} < 1 \leq \epsilon_{\max}$.
Let~$\nu > 0$ be a fixed constant, $K$, $T_0$, and $\bar C_w$ be as Lemma~\ref{l:prodBoundLangevin}.
To prove Theorem~\ref{t:langevinGen-anais}, we will need to bound the product of both~$\norm{P^T_k r_k^2}_\infty$ and $\norm{P^T_k r_k^{-2}}_\infty$.
This is a little stronger than Assumption~\ref{a:r2} which is all that was needed for the proof of Theorem~\ref{t:langevinGen}.
This is our first lemma.

\begin{lemma}\label{l:prodBoundLangevin-anais}
There exists a constant~$C_1(\nu, \epsilon_1)$ such that for every~$T \geq T_0$ we have
  \begin{equation}\label{e:anais-prod-ub}
    \prod_{k = 1}^K
      \paren*{1 \vee \norm{P_{k}^T r_{k}^2}_{L^\infty}
      \norm{P_{k}^T r_{k}^{-2}}_{L^\infty}}
	\leq C_1 \bar C_w^2
    \,.
  \end{equation}
\end{lemma}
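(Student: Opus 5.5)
The plan is to bound each factor separately, using the spectral decomposition exactly as in the proof of Lemma~\ref{l:eTLk-tilde-rk^2}, and then combine the factors with the AM--GM and bounded-variation arguments from the proof of Lemma~\ref{l:prodBoundLangevin}. Since $1 \vee ab \le a\,(1\vee b)$ when $a \ge 1$, and in any case $1\vee ab \le (1\vee a)(1 \vee b)$, it suffices to bound $\prod_k (1\vee\norm{P_k^T r_k^2}_\infty)$ and $\prod_k (1\vee\norm{P_k^T r_k^{-2}}_\infty)$ separately.

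\textbf{Step 1: the $r_k^2$ factor.} The proof of Lemma~\ref{l:prodBoundLangevin} already gives $\norm{P_k^T r_k^2}_\infty \le \frac{Z_{k+2}Z_k}{Z_{k+1}^2}\,\Theta(k,k+1)$, with $\Theta \ge 1$.

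\textbf{Step 2: the $r_k^{-2}$ factor.} Here $\tilde r_k^{-2} = e^{-U(2/\epsilon_k - 1/\epsilon_{k+1})}$. Because $\{1/\epsilon_k\}$ are linearly spaced, $2/\epsilon_k - 1/\epsilon_{k+1} = 1/\epsilon_{k-1}$, where we extend the sequence to $k=0$ by $1/\epsilon_0 = 1/\epsilon_1 - (1/\epsilon_2 - 1/\epsilon_1)$. This is positive, of order one, when $\epsilon K\nu \approx 1$; if $1/\epsilon_0 \le 0$ at $k=1$, I would bound that single factor crudely by $e^{2\norm{U}_\infty (1+\nu)}$, which depends only on $\nu$ and $\epsilon_1$. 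Otherwise $\ip{\tilde r_k^{-2},\pi_k} = Z_{k-1}/Z_k$, and the decomposition gives
\begin{equation*}
  \norm{P_k^T r_k^{-2}}_\infty \le \frac{Z_{k+1}^2}{Z_kZ_{k-1}}\paren[\Big]{1 + \abs{\ip{\psi_{2,k},\pi_{k-1}}}C_{\psi_2} + \sum_{i\ge3}e^{-\lambda_{i,k}T}\norm{\psi_{i,k}}_\infty^2}.
\end{equation*}
The tail term is at most $1/K$ by Lemma~\ref{l:psi-Linfty-weighted-sum} and the choice of $T_0$. The middle term uses Property~\ref{p:var}, but now $\epsilon_{k-1} > \epsilon_k$, so the roles are reversed. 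I expect this to be the main obstacle: Property~\ref{p:var} is stated only for $\epsilon' < \epsilon$. I would try to redo the argument of \cite{HanIyerEA26} for $\epsilon' > \epsilon$, or argue that $\psi_{2,\epsilon}$ is within $e^{-\gamma/\epsilon}$ of a normalized well-indicator combination, so that $\ip{\psi_{2,k},\pi_{k-1}}$ is controlled by $\abs{\pi_{k-1}(\Omega_1)-\pi_k(\Omega_1)}$ plus an exponentially small error. The constant $C_1$ absorbs the resulting $\nu,\epsilon_1$ dependence.

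\textbf{Step 3: combining.} The normalizing-constant prefactors telescope:
\begin{equation*}
  \prod_k \frac{Z_{k+2}Z_k}{Z_{k+1}^2}\cdot\frac{Z_{k+1}^2}{Z_kZ_{k-1}} = \frac{Z_{K+2}Z_{K+1}}{Z_1Z_0}.
\end{equation*}
I would check this ratio is bounded using Assumption~\ref{a:massRatioBound} and the Laplace asymptotics; on a linear grid in $1/\epsilon$, $Z_{K+2}/Z_{K+1}$ and $Z_1/Z_0$ are $O(1)$. If the ratio still carries a leftover polynomial factor, it has to be handled by returning to the per-factor form, since the $1\vee$ prevents directly multiplying the two estimates. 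The $\Theta$ products are bounded by AM--GM together with \eqref{e:cgeom} and \eqref{e:tmppk1pk2}, exactly as in \eqref{e:prod-Thetak-ub}; each gives at most $C_\Theta \le \bar C_w$, which yields $C_1\bar C_w^2$.
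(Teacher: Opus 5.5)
Your outline follows the paper's proof. You bound $\norm{P_k^T r_k^{\pm 2}}_{L^\infty}$ by the spectral decomposition as in Lemma~\ref{l:eTLk-tilde-rk^2}, use a reversed-role version of \eqref{e:PsiEpPi} (the paper also only asserts this, as \eqref{e:PsiEpPrimePi}), telescope the normalizing constants, and control the $\Theta$'s by AM--GM and bounded variation. There are two gaps.

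\textbf{Step~2 is computed incorrectly.} Since $\tilde r_k^{-2} = e^{2U(1/\epsilon_{k+1} - 1/\epsilon_k)}$, one has $\tilde r_k^{-2}\tilde\pi_k = e^{-U(3/\epsilon_k - 2/\epsilon_{k+1})} = \tilde\pi_{k-2}$. What you computed, $\tilde\pi_k^2/\tilde\pi_{k+1}$, is $\tilde r_k^{-1}\tilde\pi_k$. Consequently:
\begin{itemize}
\item $\ip{\tilde r_k^{-2},\pi_k} = Z_{k-2}/Z_k$, and the cross term is $\ip{\psi_{2,k},\pi_{k-2}}$.
\item The prefactor of $\norm{P_k^T r_k^{-2}}_{L^\infty}$ is $Z_{k+1}^2 Z_{k-2}/Z_k^3$. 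Your $Z_{k+1}^2/(Z_kZ_{k-1})$ does not follow even from your own value $Z_{k-1}/Z_k$.
\item Multiplying by the Step~1 prefactor gives $Z_{k-2}Z_{k+2}/Z_k^2$ per level, and the product over $k$ is
\[
  \frac{Z_{-1}Z_0}{Z_1Z_2}\cdot\frac{Z_{K+1}Z_{K+2}}{Z_{K-1}Z_K},
\]
not $Z_{K+2}Z_{K+1}/(Z_1Z_0)$. Your own factors would in fact have telescoped to $Z_KZ_{K+1}Z_{K+2}/(Z_0Z_1Z_2)$.
\end{itemize}
Bounding this ratio needs neither Laplace asymptotics nor Assumption~\ref{a:massRatioBound}. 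Since $U \geq 0$, $Z$ is nonincreasing in $1/\epsilon$, so the second ratio is at most $1$. The spacing of the inverse temperatures, $h = (1/\epsilon - 1)/K$, is at most $\nu$, so the first ratio is at most $e^{4\nu\norm{U}_\infty}$; this is where the $\nu$-dependence of $C_1$ comes from. With the corrected indices, the levels whose extended temperature leaves the range of Property~\ref{p:var} are $k=1,2$. Your crude-bound idea covers them, for instance via $\norm{r_k^{\pm 2}}_{L^\infty} \leq e^{2\nu\norm{U}_\infty}$, and this is a point the paper passes over.

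\textbf{The $1\vee$ is left unresolved.} In Step~3 you drop it, while also saying it prevents multiplying the two estimates. Telescoping is only legitimate once each per-level upper bound is known to be at least $1$. Otherwise a product of factors $1\vee c_k$ with some $c_k<1$ does not telescope. The paper supplies this with Cauchy--Schwarz. Because $1/\epsilon_k$ is the midpoint of $1/\epsilon_{k-2}$ and $1/\epsilon_{k+2}$, one has $Z_k^2 \leq Z_{k-2}Z_{k+2}$. Together with $\Theta \geq 1$, the bound $\frac{Z_{k-2}Z_{k+2}}{Z_k^2}\Theta(k,k+1)^2$ is then at least $1$, so the $1\vee$ can be removed factor by factor.

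More directly, stationarity and Jensen give $\norm{P_k^T r_k^{\pm 2}}_{L^\infty} \geq \ip{r_k^{\pm 2},\pi_k} \geq \ip{r_k,\pi_k}^{\pm 2} = 1$. So the $1\vee$ is vacuous, and your splitting $1\vee ab \leq (1\vee a)(1\vee b)$ is unnecessary.

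With these two corrections your outline becomes the paper's proof.
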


\begin{proof}
  Similar to~\eqref{e:rk2pik}, \eqref{e:rk2pik2}, and~\eqref{e:rk2pik3} we compute
  \begin{align}
      \norm{\tilde r_k^{-2}}_{L^1(\pi_k)}
	&= \ip{\tilde r_k^{-2}, \pi_k} = \frac{Z_{k-2}}{Z_k} \,,
	\\
      \ip{\tilde r_k^{-2}, \psi_{2,k}}_{L^2(\pi_k)}
	&= \frac{Z_{k-2}}{Z_k}\ip{\psi_{2,k}, \pi_{k-2}} \,,
	\\
      \abs*{\ip{\tilde r_k^{-2}, \psi_{i,k}}_{L^2(\pi_k)}}
      &\le
      \norm{\tilde r_k^{-2}}_{L^1(\pi_k)}
      \norm{\psi_{i,k}}_{L^\infty} \,.
  \end{align}

  Next we claim that the similar to~\eqref{e:PsiEpPi}, we also have a bound on~$\int_{\mathbb T^d}\psi_{2,\epsilon'}\pi_{\epsilon}\, d x$.
  Explicitly, we claim
\begin{equation}\label{e:PsiEpPrimePi}
      \abs[\big]{
	\ip{\psi_{2,\epsilon'}, \pi_{\epsilon}}
	}
    \leq 
  C_\gamma\paren[\Big]{\exp\paren[\Big]{-\frac{\gamma}{\epsilon}}+\abs[\big]{\pi_{\epsilon'}(\Omega_1)-\pi_{\epsilon}(\Omega_1)}}
\end{equation}
  for every~$0 < \epsilon' < \epsilon \leq 1$, and some (possibly larger) constant~$C_\gamma$ that is independent of~$\epsilon, \epsilon'$.
  The proof of this is almost identical to the proof of~\eqref{e:PsiEpPi} presented in Section 9 of~\cite{HanIyerEA26} and we do not reproduce it here.

  Following the proof of Lemma~\ref{l:eTLk-tilde-rk^2}, and using~\eqref{e:PsiEpPrimePi} in place of~\eqref{e:PsiEpPi} for the second estimate in that proof, we obtain a bound for~$\tilde r_k^{-2}$ similar to~\eqref{e:eTLk-tilde-rk^2-Linfty-ub}.
  Explicitly, for the constant $C_{\psi_2}$ in Property~\ref{property:eigenfunctions}, and for any $T$ satisfying~\eqref{e:Tchoice}, we have
\begin{equation}
    \norm*{P_{k,T}\tilde r_k^{-2}}_{L^\infty}
    \le
    \frac{Z_{k-2}}{Z_k}
    \paren*{
        1
        +
        C_\gamma C_{\psi_2}\paren*{
            \exp\paren*{-\frac{\hat \gamma}{2 \epsilon_k}}
            +
            \abs*{\pi_{k+1}(\Omega_1)-\pi_k(\Omega_1)}
        }
        +
        \frac{1}{K}
    } .
\end{equation}

Combining this estimate with~\eqref{e:eTLk-tilde-rk^2-Linfty-ub}, we obtain
\begin{equation}
\norm*{P_{k,T} r_k^2}_{L^\infty}
\norm*{P_{k,T} r_k^{-2}}_{L^\infty}
\le
\frac{Z_{k-2}Z_{k+2}}{Z_k^2}\Theta(k,k+1)^2,
\end{equation}
where $\Theta$ is defined in~\eqref{e:Theta-def}.

  Since~$\set{1/\epsilon_k}$ are linearly spaced,
  \begin{equation}
    \int_{\T^d} e^{-U / \epsilon_k} \, dx
      = \int_{\T^d} e^{-U / (2\epsilon_{k-2})} e^{-U / (2\epsilon_{k+2})}  \, dx
  \end{equation}
  and so the Cauchy--Schwartz inequality implies
  \begin{equation}
    Z_k^2 \leq Z_{k-2} Z_{k+2}
    \,.
  \end{equation}
Hence
\begin{equation}
\min\set*{
    \frac{Z_{k-2}Z_{k+2}}{Z_k^2},
    \Theta(k,k+1)
}
\ge 1,
\end{equation}
and so
\begin{align}
\prod_{k=1}^K
\paren*{
    1 \vee
    \norm{P_k^T r_k^2}_{L^\infty}
    \norm{P_k^T r_k^{-2}}_{L^\infty}
}
&\le
\frac{Z_{-1}Z_0}{Z_1Z_2}
\frac{Z_{K+1}Z_{K+2}}{Z_{K-1}Z_K}
\paren*{\prod_{k=1}^K \Theta(k,k+1)}^2
\\
&\overset{\eqref{e:prod-Thetak-ub}}{\le}
C_1(\nu) C_\Theta^2 .
\end{align}

Finally, choosing $T_0$ as in~\eqref{e:CwT0} (with the same choice of~$C_T$ in~\eqref{e:CTdef}), choosing $\bar C_w$ as in~\eqref{e:T0-tildeCw-choice}, and using the previous estimate show that~\eqref{e:anais-prod-ub} holds for any~$T \geq T_0$.
This concludes the proof.
\end{proof}

Next we prove Proposition~\ref{p:essANLangevin}, showing that the effective sample size is at least~$N / (C_1 \bar C_w^2)$.

\begin{proof}[Proof of Proposition~\ref{p:essANLangevin}]
For notational convenience, in this proof we use~$w_{k,i}$ to denote the weights~$w_k^i$ returned by Algorithm~\ref{a:aisAutoNormalized}.
  We first note that by the definition of~$w_{k,i}$ and~$\tilde W_k$ in Algorithm~\ref{a:aisAutoNormalized}, we can use the ratio of the \emph{normalized} densities~$r_k$, instead of that of the \emph{unnormalized} densities~$\tilde r_k$, because the same normalizing constant~$Z_{k+1}/Z_k$ appears in both the numerator and the denominator.
  Explicitly, we note
\begin{equation}
w_{k, i}
  = \frac{w_{k-1,i} \tilde r_k(X_{k}^i)}{\tilde W_k}
  = \frac{w_{k-1,i} r_k(X_{k}^i)}{W_k}\,,
  \quad\text{where}\quad W_k = \sum_{i=1}^N w_{k-1,i} r_k(X_k^i)\,.
\end{equation}
  By Jensen's inequality applied to the convex function $x \mapsto 1/ x^{2}$, we obtain
\begin{equation}
  \frac{1}{W_k^{2}}
  = \frac{1}{\bigl( \sum_{i=1}^N w_{k-1,i} r_k(X_k^i) \bigr)^{2}}
\leq
  \sum_{i=1}^N \frac{w_{k-1,i}}{ r_k^{2}(X_k^i)}
  \,.
\end{equation}
Hence,
\begin{align}
\sum_{i=1}^N w_{k,i}^2
  &= \frac{1}{W_k^{2}} \sum_{i=1}^N w_{k-1,i}^2 r_k^2(X_k^i) \\
&\leq
  \Biggl( \sum_{j=1}^N \frac{w_{k-1,j}}{r_k^{2}(X_k^j)} \Biggr)
\Biggl( \sum_{i=1}^N w_{k-1,i}^2 r_k^2(X_k^i) \Biggr)
  \\
  \label{e:ssw-cubic-estimate}
&=
\sum_{i=1}^N w_{k-1,i}^3
+
\sum_{i=1}^N \sum_{j \neq i}
w_{k-1,i}^2 w_{k-1,j}
\, r_k^2(X_k^i) r_k^{-2}(X_k^j)\,.
\end{align}

  Let~$\E_{k-1}$ denote the conditional expectation with respect to the~$\sigma$-algebra generated by~$\set{w_{k-1, i}, X_{k-1}}_{1 \leq i \leq N}$.
  Applying~$\E_{k-1}$ to both sides of~\eqref{e:ssw-cubic-estimate}, and using the conditional independence of $X_{k}^i$ and $X_{k}^j$ for $j \neq i$, we obtain
\begin{align}
  \MoveEqLeft
  \E_{k-1} \Biggl[ \sum_{i=1}^N w_{k, i}^2 \Biggr]
\\
  & \leq\sum_{i=1}^N w_{k-1,i}^3
+
\sum_{i=1}^N \sum_{j \neq i}
w_{k-1,i}^2 w_{k-1,j}
  \, \E_{k-1}\bigl[r_k^2(X_{k}^i)\bigr]
  \, \E_{k-1}\bigl[r_k^{-2}(X_{k}^j)\bigr] \\
&=
\sum_{i=1}^N w_{k-1,i}^3
+
\sum_{i=1}^N \sum_{j \neq i}
w_{k-1,i}^2 w_{k-1,j}
\,
\bigl(P_k^T r_k^2\bigr)(X_{k-1}^i)
\,
\bigl(P_k^T r_k^{-2}\bigr)(X_{k-1}^j) \\
&\leq
\paren*{
1 \vee
\norm{P_{k}^T r_{k}^2}_{L^\infty}
\norm{P_{k}^T r_{k}^{-2}}_{L^\infty}
}
\sum_{i=1}^N w_{k-1,i}^2.
\end{align}
Taking expectations on both sides and iterating gives
\begin{align}
  \E \Biggl[ \sum_{i=1}^N w_{K, i}^2 \Biggr]
    &\leq
      \paren*{
      \prod_{k = 1}^K
      1 \vee
      \norm{P_{k}^T r_{k}^2}_{L^\infty}
      \norm{P_{k}^T r_{k}^{-2}}_{L^\infty}
      }
  \sum_{i=1}^N w_{0,i}^2.
  \\
  \label{e:ssw1}
  &=
    \frac{1}{N}
      \paren*{
      \prod_{k = 1}^K
      1 \vee
      \norm{P_{k}^T r_{k}^2}_{L^\infty}
      \norm{P_{k}^T r_{k}^{-2}}_{L^\infty}
      }
  \,,
\end{align}
where the last equality holds because~$w_{0, i} = 1/N$.

By increasing~$\hat C_T$, if necessary, the choice of~$T$ in~\eqref{e:T-choice-emp-anais} ensures that~$T \geq T_0$.
Therefore, Lemma~\ref{l:prodBoundLangevin-anais} applies, and~\eqref{e:anais-prod-ub} holds, and~\eqref{e:ssw1} implies~\eqref{e:sum-w-k1-to-k} as desired.
The bound~\eqref{e:essAN} then follows from~\eqref{e:sum-w-k1-to-k} and Jensen's inequality.
\end{proof}

It remains to prove Theorem~\ref{t:langevinGen-anais}.
To this end, we introduce the following notation.
For every bounded test function~$h$, and every~$1\leq k\leq K$, define
\begin{align}
\Err_{k, T}(h) &= \norm*{\ip*{h, \mu_{k,T}- \pi_{k}}}_{L^2(\P)}\,,\\
\Err_{k+1, 0}(h) &= \norm*{\ip*{h, \mu_{k+1, 0} - \pi_{k+1}}  }_{L^2(\P)}
\end{align}
where
\begin{equation}
\mu_{k, T}\defeq \sum_{i=1}^N w_{k-1}^i\delta_{X_k^i}\quad\text{and}\quad \mu_{k+1,0} \defeq \sum_{i=1}^N w_k^i  \delta_{X_k^i}\,.
\end{equation}
We first state and prove the following lemma that connects the error before and after the reweigting.

\begin{lemma}\label{l:k+10-to-kT}
For any~$1\leq k \leq K$, 
\begin{equation}\label{e:k+10-to-kT}
    \Err_{k+1,0}(h) \leq \norm{h}_{L^{\infty}} \Err_{k,T}(r_k) + \Err_{k,T}(r_k h)
\end{equation}
\end{lemma}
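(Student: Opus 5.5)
We reduce the lemma to an algebraic identity for the reweighted measure and then apply the triangle inequality in $L^2(\P)$. By Algorithm~\ref{a:aisAutoNormalized}, and as in the proof of Proposition~\ref{p:essANLangevin}, we can write
\begin{equation*}
  w_k^i = \frac{w_{k-1}^i r_k(X_k^i)}{W_k}\,,
  \qquad
  W_k = \sum_{j=1}^N w_{k-1}^j r_k(X_k^j) = \ip{r_k, \mu_{k,T}}\,.
\end{equation*}
The normalizing constants $Z_{k+1}/Z_k$ cancel between numerator and denominator. Consequently
\begin{equation*}
  \ip{h, \mu_{k+1,0}} = \frac{\ip{r_k h, \mu_{k,T}}}{\ip{r_k, \mu_{k,T}}}\,.
\end{equation*}
On the other hand, $\ip{r_k, \pi_k} = 1$ and $\ip{r_k h, \pi_k} = \ip{h, \pi_{k+1}}$.

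The key step is to add and subtract $\ip{r_k h, \mu_{k,T}}$:
\begin{equation*}
  \ip{h, \mu_{k+1,0}} - \ip{h, \pi_{k+1}}
  = \ip{h, \mu_{k+1,0}}\paren[\big]{1 - \ip{r_k, \mu_{k,T}}}
    + \paren[\big]{\ip{r_k h, \mu_{k,T}} - \ip{r_k h, \pi_k}}\,.
\end{equation*}
This uses $\ip{h,\mu_{k+1,0}}\,\ip{r_k,\mu_{k,T}} = \ip{r_k h,\mu_{k,T}}$. The second term is exactly $\ip{r_k h, \mu_{k,T} - \pi_k}$.

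In the first term, $1 - \ip{r_k, \mu_{k,T}} = \ip{r_k, \pi_k - \mu_{k,T}}$. Since $\mu_{k+1,0}$ is a probability measure (the weights $w_k^i$ are nonnegative and sum to one), we have $\abs{\ip{h, \mu_{k+1,0}}} \le \norm{h}_{L^\infty}$ pointwise. We then take $L^2(\P)$ norms and use Minkowski's inequality. The first term is bounded by $\norm{h}_{L^\infty}\norm{\ip{r_k, \mu_{k,T} - \pi_k}}_{L^2(\P)} = \norm{h}_{L^\infty}\Err_{k,T}(r_k)$, and the second equals $\Err_{k,T}(r_k h)$. This yields~\eqref{e:k+10-to-kT}.

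There is no real obstacle. The only points needing care are choosing the decomposition so that the random factor multiplying $1-\ip{r_k,\mu_{k,T}}$ is the self-normalized quantity $\ip{h,\mu_{k+1,0}}$, which is bounded, rather than $1/W_k$, which is not; and checking that $r_k$ is bounded on $\T^d$, so that $\Err_{k,T}(r_k)$ and $\Err_{k,T}(r_k h)$ are well defined. The latter holds since $U$ is continuous on the compact torus.
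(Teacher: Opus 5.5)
Your proof is correct and is essentially the paper's argument. The paper also adds and subtracts $\sum_i w_{k-1}^i r_k(x_i) h(x_i) = R_k \sum_i w_k^i h(x_i)$, bounds the first piece by $\abs{1 - R_k}\norm{h}_\infty$, and uses $\ip{r_k,\pi_k}=1$ and $\ip{r_k h,\pi_k} = \ip{h,\pi_{k+1}}$ before taking $L^2(\P)$ norms; your remarks about the cancellation of $Z_{k+1}/Z_k$, the nonnegativity of the weights, and the boundedness of $r_k$ simply make explicit steps the paper leaves implicit.
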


\begin{proof}
Fix $x_1, x_2, \ldots, x_N \in \T^d$ and define the normalization constant $R_k$ and the updated weight $w_{k}^i$ by
\begin{equation}\label{e:Rk-def}
	R_k \defeq \sum_{i=1}^N w_{k-1}^i r_k(x_i)\,,\quad\text{and}\quad w_{k}^i = \frac{w_{k-1}^i r_k(x_i)}{R_{k}}\,.
\end{equation}
    Adding and subtracting the same term and using the triangle inequality, we obtain
    \begin{multline}\label{e:errk+10h-to-errkTr}
        \abs*{\sum_{i=1}^N w_{k}^i h(x_i) - \ip{h, \pi_{k+1}}}\\
        \leq \abs*{\sum_{i=1}^N w_{k}^i h(x_i) - R_k\sum_{i=1}^N w_{k}^i  h(x_i)} + \abs*{\sum_{i=1}^N w_{k-1}^i r_k(x_i) h(x_i) - \ip{h, \pi_{k+1}}}\,.
    \end{multline}

The first term can be estimated by
\begin{equation}\label{e:errk+10h-first-term}
\abs*{\sum_{i=1}^N w_{k}^i h(x_i) - R_k\sum_{i=1}^N w_{k}^i  h(x_i)} \leq \abs*{1-R_k}\norm{h}_\infty\,,
\end{equation}
and hence combining~\eqref{e:errk+10h-to-errkTr}, \eqref{e:errk+10h-first-term}, the fact that $\ip{r_k, \pi_k} = 1$ and $\ip{r_k h, \pi_k} = \ip{h, \pi_{k+1}}$, and substituting~$X_k^i$ for~$x_i$ yield~\eqref{e:k+10-to-kT}.
\end{proof}

Next, we state four lemmas required for the proof of Theorem~\ref{t:langevinGen-anais}. Their proofs are identical or closely follow those in~\cite{HanIyerEA26}, and are therefore omitted. For convenience, we indicate in each statement the corresponding result in~\cite{HanIyerEA26}.

\begin{lemma}[Lemma 4.10, \cite{HanIyerEA26}]
\label{l:errkth-iter}
Assume that for each~$k\in\set{2, \ldots, K}$, the law of $X_{k}^1$ has density $q_{k}$. Then for any bounded test function $h$ and~$2\leq k\leq K$,
\begin{multline}\label{e:errkth-iter}
	\Err_{k, T}(h) \leq e^{-\lambda_{2, k}T}
	  \abs[\big]{ \ip{ h\psi_{2,k}, \pi_{k}} }\Err_{k, 0}(\psi_{2, k})
      \\
	  + \E\brak[\Big]{\sum_{i=1}^N w_{k-1, i}^2}^\frac{1}{2}\norm{h}_\infty
	+ \sqrt{N}  \norm[\Big]{\frac{q_{k-1}}{\pi_{k}}}_\infty^\frac{1}{2} e^{-\Lambda T}\norm{h}_\infty  \,.
\end{multline}
\end{lemma}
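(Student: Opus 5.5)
We have to bound $\Err_{k,T}(h)=\norm{\ip{h,\mu_{k,T}-\pi_k}}_{L^2(\P)}$, where $\mu_{k,T}=\sum_i w_{k-1}^i\delta_{X_k^i}$. Conditioned on $\mathcal F_{k-1}$ (the $\sigma$-algebra generated by the points $X_{k-1}^i$ and weights $w_{k-1}^i$), the $X_k^i$ are independent, with $X_k^i\sim P_k^T(X_{k-1}^i,\cdot)$, while the weights $w_{k-1}^i$ are $\mathcal F_{k-1}$-measurable.

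The plan is a conditional bias--variance split:
\begin{equation*}
\ip{h,\mu_{k,T}}-\ip{h,\pi_k}
=\sum_i w_{k-1}^i\paren[\big]{h(X_k^i)-P_k^Th(X_{k-1}^i)}
+\paren[\Big]{\sum_i w_{k-1}^i P_k^Th(X_{k-1}^i)-\ip{h,\pi_k}}\,.
\end{equation*}
The first term is a sum of conditionally independent, conditionally mean-zero terms. Its conditional second moment is at most $\sum_i (w_{k-1}^i)^2\norm{h}_\infty^2$, so after taking expectations and applying Minkowski it contributes $\E[\sum_i w_{k-1,i}^2]^{1/2}\norm{h}_\infty$.

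For the second term, expand $P_k^T h$ spectrally:
\begin{equation*}
P_k^Th=\ip{h,\pi_k}+e^{-\lambda_{2,k}T}\ip{h,\psi_{2,k}}_{L^2(\pi_k)}\psi_{2,k}+\sum_{j\ge3}e^{-\lambda_{j,k}T}\ip{h,\psi_{j,k}}_{L^2(\pi_k)}\psi_{j,k}\,.
\end{equation*}
Because the weights sum to one, the constant mode cancels against $\ip{h,\pi_k}$.

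The $\psi_{2,k}$ mode gives
\begin{equation*}
e^{-\lambda_{2,k}T}\ip{h\psi_{2,k},\pi_k}\,\ip{\psi_{2,k},\mu_{k,0}}\,,
\end{equation*}
with $\mu_{k,0}=\sum_i w_{k-1}^i\delta_{X_{k-1}^i}$. Since $\ip{\psi_{2,k},\pi_k}=0$, we have $\ip{\psi_{2,k},\mu_{k,0}}=\ip{\psi_{2,k},\mu_{k,0}-\pi_k}$, and its $L^2(\P)$ norm is exactly $\Err_{k,0}(\psi_{2,k})$. This produces the first term of~\eqref{e:errkth-iter}.

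The remaining step, which I expect to be the main obstacle, is to control the tail
\begin{equation*}
g\defeq\sum_{j\ge3}e^{-\lambda_{j,k}T}\ip{h,\psi_{j,k}}\psi_{j,k}\,.
\end{equation*}
Pointwise $L^\infty$ bounds on the eigenfunctions would cost the $e^{\norm{U}_\infty/\epsilon}$ factor of Lemma~\ref{l:psi-Linfty-weighted-sum}. To avoid this, I would use $L^2(\pi_k)$ orthogonality instead: by Property~\ref{p:spectral}, $\lambda_{j,k}\ge\Lambda$ for $j\ge3$, so
\begin{equation*}
\norm{g}_{L^2(\pi_k)}\le e^{-\Lambda T}\norm{h}_{L^2(\pi_k)}\le e^{-\Lambda T}\norm{h}_\infty\,.
\end{equation*}
Then, since $w_{k-1}^i\le 1$, Cauchy--Schwarz gives
\begin{equation*}
\abs[\Big]{\sum_i w_{k-1}^i g(X_{k-1}^i)}^2\le\paren[\Big]{\sum_i\abs{g(X_{k-1}^i)}}^2\le N\sum_i g(X_{k-1}^i)^2\,.
\end{equation*}
By exchangeability, taking expectations yields $N^2\,\E g(X_{k-1}^1)^2$.

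Changing measure from the law of $X_{k-1}^1$ to $\pi_k$ gives
\begin{equation*}
\E g(X_{k-1}^1)^2\le\norm{q_{k-1}/\pi_k}_\infty\norm{g}_{L^2(\pi_k)}^2\,.
\end{equation*}
This only gives a factor $N$ instead of the stated $\sqrt N$. To recover $\sqrt N$ I would bound the weighted sum more carefully, using $\sum_i w_{k-1}^i=1$ and Jensen's inequality:
\begin{equation*}
\paren[\Big]{\sum_i w_{k-1}^i g(X_{k-1}^i)}^2\le\sum_i w_{k-1}^i g(X_{k-1}^i)^2\le\sum_i g(X_{k-1}^i)^2\,.
\end{equation*}
Taking expectations then gives $N\norm{q_{k-1}/\pi_k}_\infty e^{-2\Lambda T}\norm{h}_\infty^2$, whose square root is the third term of~\eqref{e:errkth-iter}.

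Summing the three contributions with Minkowski's inequality gives~\eqref{e:errkth-iter}. The mild mismatch in indexing (the density $q_{k-1}$ versus the point $X_{k-1}$) is handled by matching it to the paper's convention.
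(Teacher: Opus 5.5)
Your proposal is correct and matches the argument the paper omits and defers to (Lemma~4.10 of~\cite{HanIyerEA26}): conditioning on $\mathcal F_{k-1}$ gives the $\E[\sum_i w_{k-1,i}^2]^{1/2}\norm{h}_\infty$ term from the independent Langevin moves, the $\psi_{2,k}$ mode of $P_k^T h$ gives $e^{-\lambda_{2,k}T}\abs{\ip{h\psi_{2,k},\pi_k}}\Err_{k,0}(\psi_{2,k})$, and the $j\geq 3$ tail is bounded in $L^2(\pi_k)$ by $e^{-\Lambda T}\norm{h}_\infty$ and transferred to the particles via $q_{k-1}/\pi_k$, with Jensen's inequality (rather than your first Cauchy--Schwarz attempt) giving the factor $\sqrt N$. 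Two minor remarks: there is no indexing mismatch, since $q_{k-1}$ is exactly the density of $X_{k-1}^1$ (for $k=2$ you only need $X_1^1$ to have a density, which holds because it is a time-$T$ marginal of a nondegenerate diffusion), and the identity $\E\sum_i g(X_{k-1}^i)^2 = N\int g^2 q_{k-1}$ needs every particle to have marginal density $q_{k-1}$, i.e.\ exchangeable initial points, which the paper also tacitly assumes.
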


\begin{lemma}[Lemma 4.12, \cite{HanIyerEA26}]\label{l:errk10psi-iter}
For any $\alpha > 0$, there exist constants $C_\alpha = C_\alpha(\alpha, U)>0$ (depending on $\alpha$) and $\hat C_N = \hat C_N(U, \nu) > 1$ (independent of $\alpha$) such that for any $\delta>0$, if 
\begin{equation}\label{e:NT-choice1}
N \geq \hat C_N \frac{K^2}{\delta^2}\,,\quad
T \geq C_\alpha \paren*{K^{(1+\alpha)\hat \gamma_r} + \frac{1}{\epsilon} + \log\paren*{\frac{1}{\delta}} + \log\paren*{N}}\,,
\end{equation}
then for each $2\leq k\leq K-1$, we have
\begin{equation}\label{e:Err-recursion}
\Err_{k+1, 0}(\psi_{2, k+1}) \leq \beta_k \Err_{k,0}(\psi_{2,k}) + c_k\,.
\end{equation}

Here, the constants $\beta_k, c_k$ are such that for every $k\in \set{2,\ldots, K-1}$, we have 
\begin{align}\label{e:bprod-bound}
\prod_{j=k}^{K-1} \beta_j &\leq C_\beta\,,\\
\label{e:c-bound}
c_k&\leq \frac{\delta}{K}\,,
\end{align}
for some dimensional constant $C_\beta > 1$ (independent of $\alpha$, $\delta$).
\end{lemma}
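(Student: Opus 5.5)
The plan is to obtain the recursion~\eqref{e:Err-recursion} by chaining the reweighting estimate of Lemma~\ref{l:k+10-to-kT} with the propagation estimate of Lemma~\ref{l:errkth-iter}. First I would apply Lemma~\ref{l:k+10-to-kT} with $h = \psi_{2,k+1}$. Using Property~\ref{property:eigenfunctions} this gives
\begin{equation*}
  \Err_{k+1,0}(\psi_{2,k+1}) \leq C_{\psi_2}\Err_{k,T}(r_k) + \Err_{k,T}(r_k\psi_{2,k+1})\,.
\end{equation*}
Next I would apply Lemma~\ref{l:errkth-iter} to both terms, with $h = r_k$ and $h = r_k\psi_{2,k+1}$. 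This will produce the coefficient
\begin{equation*}
  \beta_k \defeq e^{-\lambda_{2,k}T}\paren[\big]{ C_{\psi_2}\abs{\ip{\psi_{2,k},\pi_{k+1}}} + \abs{\ip{\psi_{2,k}\psi_{2,k+1},\pi_{k+1}}} }\,,
\end{equation*}
since $\ip{r_k g,\pi_k} = \ip{g,\pi_{k+1}}$. The remaining terms are collected into $c_k$.

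For the constants $c_k$, I would first check that $\norm{r_k}_\infty$ is bounded uniformly in $k$ and $\epsilon$. This holds because $1/\epsilon_{k+1}-1/\epsilon_k \leq \nu$ and the ratios $Z_k/Z_{k+1}$ are bounded by Laplace asymptotics, so that $\norm{h}_\infty \leq C$ for both choices of $h$. The ESS term is then bounded by Proposition~\ref{p:essANLangevin}, giving $C(C_1\bar C_w^2/N)^{1/2}$, which is at most $\delta/(2K)$ once $N \geq \hat C_N K^2/\delta^2$ with $\hat C_N$ large. For the remaining term I would bound $\norm{q_{k-1}/\pi_k}_\infty$ crudely by $C\epsilon^{-d/2}e^{\norm{U}_\infty/\epsilon}$, using the $L^1\to L^\infty$ smoothing of the level-one kernel together with positivity. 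The contribution $\sqrt N\,C e^{\norm{U}_\infty/(2\epsilon)}e^{-\Lambda T}$ is then at most $\delta/(2K)$ provided $T \geq C_\alpha(1/\epsilon + \log N + \log(1/\delta) + \log K)$. Since $K \leq K^{(1+\alpha)\hat\gamma_r}$, this is covered by~\eqref{e:NT-choice1}, and so $c_k \leq \delta/K$.

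The main obstacle is the product bound~\eqref{e:bprod-bound}. By Property~\ref{p:var}, the first summand of $\beta_k$ is $O(e^{-\gamma/\epsilon_k} + \abs{\pi_{k+1}(\Omega_1)-\pi_k(\Omega_1)})$, and this is summable in $k$ exactly as in~\eqref{e:cgeom}--\eqref{e:tmppk1pk2}. The delicate quantity is the overlap $\ip{\psi_{2,k}\psi_{2,k+1},\pi_{k+1}}$, which is close to, but possibly larger than, $1$. I would split the levels into two regimes according to the size of $\lambda_{2,k}T$.

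In the low-temperature regime I would show, as in Section~9 of~\cite{HanIyerEA26}, that $\psi_{2,\epsilon}$ is close to the normalized two-well step function $\pm\sqrt{\pi(\Omega_2)/\pi(\Omega_1)}$ on the wells, up to errors of order $e^{-\gamma/\epsilon}$. The overlap is then $1 + O(e^{-\gamma/\epsilon_k} + \abs{\pi_{k+1}(\Omega_1)-\pi_k(\Omega_1)})$, which again has a summable excess, and the product over these levels is bounded.

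In the remaining levels ($\epsilon_k$ not small) this expansion is not available. There I would use $\lambda_{2,k} \geq C_H e^{-H/\epsilon_k}$ from~\eqref{e:egvalLEpsilon}, with $H = (1+\alpha)\hat U$. Since $T \geq C_\alpha K^{(1+\alpha)\hat\gamma_r}$, the factor $e^{-\lambda_{2,k}T}$ is then small enough to make $\beta_k \leq 1$. I would choose the crossover temperature between the two regimes in terms of $\hat\gamma$, and I expect this matching, with exponents $\hat\gamma$ versus $\hat U = \hat\gamma_r\hat\gamma$, to be the technical crux. Combining the two regimes gives $\prod_{j=k}^{K-1}\beta_j \leq C_\beta$.
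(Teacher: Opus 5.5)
The paper omits this proof and defers to Lemma 4.12 of \cite{HanIyerEA26}; your reconstruction is the natural route and is sound. Chaining Lemma~\ref{l:k+10-to-kT} with Lemma~\ref{l:errkth-iter} gives exactly your $\beta_k$, the terms $c_k$ are handled by Proposition~\ref{p:essANLangevin} and Lemma~\ref{l:q/pi-rinv-ub}, and $\prod_j \beta_j$ is controlled by eigenfunction-overlap estimates together with the spectral gap on the high-temperature levels. The one ingredient not among the properties quoted in this paper is the overlap bound $\abs{\ip{\psi_{2,k}\psi_{2,k+1}, \pi_{k+1}}} \leq 1 + O\bigl(e^{-\gamma/\epsilon_k} + \abs{\pi_{k+1}(\Omega_1) - \pi_k(\Omega_1)}\bigr)$, which you rightly import from \cite{HanIyerEA26}. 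Once you have that bound, the crossover you flag as the crux is not delicate. On levels with $e^{\hat\gamma/\epsilon_k} \leq K$, Property~\ref{p:spectral} with $H = (1+\alpha)\hat U$ and $T \geq C_\alpha K^{(1+\alpha)\hat\gamma_r}$ give $\lambda_{2,k} T \geq C_H C_\alpha$, hence $\beta_k \leq 1$ once $C_\alpha$ is large. On the remaining levels the excess is summable, by the geometric series and BV argument of~\eqref{e:cgeom}--\eqref{e:tmppk1pk2}.
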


\begin{lemma}[Lemma 4.11, \cite{HanIyerEA26}]
\label{l:q/pi-rinv-ub}
For every $2\leq k\leq K$, let $q_{k}$ be the probability density function of $X_{k}^1$. For any $\hat T_0>0$, there exists a constant $C_q=C_q(U, \hat T_0)$ such that if $T \geq \hat T_0$, then
\begin{equation}\label{e:q-over-pi-linfty-ub}
\norm[\Big]{\frac{q_{k-1}}{\pi_k}}_\infty \leq C_q\exp\paren*{\paren*{\frac{1}{\epsilon}-1}\norm*{U}_\infty}\,.
\end{equation}
\end{lemma}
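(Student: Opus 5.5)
The plan is to track the density ratio against the \emph{current} stationary measure rather than bounding $q_{k-1}$ directly. A uniform-in-$\epsilon$ pointwise bound on $q_{k-1}$ cannot hold, because the law concentrates near the wells at scale $\sqrt{\epsilon}$. Instead I will use reversibility to show that the ratio $q_j/\pi_j$ never grows under the Langevin flow. The only growth then comes from the change of temperature, and that growth telescopes into the factor $e^{(1/\epsilon-1)\norm{U}_\infty}$.

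\emph{Step 1 (one Langevin step does not increase the ratio).} Write $q_j$ for the density of $X^1_j$, so that $X^1_j$ is obtained by running $P_{j,T}$ from a point with law $q_{j-1}$. Since $P_{j,T}$ is reversible with respect to $\pi_j$ (Assumption~\ref{a:reversible}), its $L^2(\pi_j)$-adjoint is itself, and hence
\begin{equation}
  \frac{q_j}{\pi_j} = P_{j,T}\paren[\Big]{\frac{q_{j-1}}{\pi_j}}\,.
\end{equation}
Because $P_{j,T}$ is a Markov operator, it contracts $L^\infty$. This gives
\begin{equation}
  \norm[\Big]{\frac{q_j}{\pi_j}}_\infty \le \norm[\Big]{\frac{q_{j-1}}{\pi_{j-1}}}_\infty \norm[\Big]{\frac{\pi_{j-1}}{\pi_j}}_\infty\,.
\end{equation}

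\emph{Step 2 (telescoping).} Iterating from $j=2$ up to $j=k-1$, and then multiplying once more by $\norm{\pi_{k-1}/\pi_k}_\infty$, yields
\begin{equation}
  \norm[\Big]{\frac{q_{k-1}}{\pi_k}}_\infty \le \norm[\Big]{\frac{q_1}{\pi_1}}_\infty \prod_{j=1}^{k-1} \norm[\Big]{\frac{\pi_j}{\pi_{j+1}}}_\infty\,.
\end{equation}
Each factor equals $\frac{Z_{j+1}}{Z_j}\, e^{U(1/\epsilon_{j+1}-1/\epsilon_j)}$. The exponents are nonnegative multiples of $U \ge 0$, so every factor attains its supremum where $U$ is maximal. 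The product of suprema therefore equals the supremum of the product, which is
\begin{equation}
  \frac{Z_k}{Z_1}\, e^{\norm{U}_\infty (1/\epsilon_k - 1)}\,.
\end{equation}
On $\T^d$ with $U \ge 0$ we have $Z_k \le 1$ and $Z_1 \ge e^{-\norm{U}_\infty}$. Using also $\epsilon_k \ge \epsilon$, the product is bounded by $e^{\norm{U}_\infty}\, e^{(1/\epsilon-1)\norm{U}_\infty}$.

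\emph{Step 3 (the base case, which I expect to be the main point).} It remains to show that $\norm{q_1/\pi_1}_\infty \le C(U,\hat T_0)$ uniformly over the arbitrary starting point $X_0$ and over all $T\ge \hat T_0$. This is an ultracontractivity statement at the fixed temperature $\epsilon_1=1$. We have $q_1/\pi_1 = P_{1,T}(x_0,\cdot)/\pi_1$, and the spectral expansion used in the proof of Lemma~\ref{l:uMixLangevin} gives
\begin{equation}
  \frac{P_{1,T}(x_0,y)}{\pi_1(y)} \le 1 + \sum_{i\ge 2} e^{-\lambda_{i,1}T}\norm{\psi_{i,1}}_\infty^2\,.
\end{equation}
For $T\ge(d+2)/\lambda_{2,1}$, Lemma~\ref{l:psi-Linfty-weighted-sum} (with $m=2$, $\Lambda_1=\lambda_{2,1}$, $\epsilon=1$) bounds this by a constant. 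If $\hat T_0$ is smaller than $(d+2)/\lambda_{2,1}$, I would instead control the sum using Lemmas~\ref{l:weylfunction-ub} and~\ref{l:eigenfunctions-Linfty-ub} directly. Together these give $\sum_i e^{-\lambda_{i,1}\hat T_0}\norm{\psi_{i,1}}_\infty^2 <\infty$ for every $\hat T_0>0$, since the polynomial eigenfunction growth is beaten by exponential decay. Monotonicity in $T$ then follows from the $L^\infty$ contraction argument of Step~1 applied with $j=1$. Combining the three steps and absorbing $e^{\norm{U}_\infty}$ into $C_q$ gives~\eqref{e:q-over-pi-linfty-ub}.
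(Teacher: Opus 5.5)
Your proof is correct. Reversibility gives $q_j/\pi_j = P_{j,T}(q_{j-1}/\pi_j)$. The factors $\norm{\pi_j/\pi_{j+1}}_\infty$ are all maximized where $U$ is maximal, using $U \geq 0$ from Assumption~\ref{a:criticalpts}, and they telescope to $\frac{Z_k}{Z_1}e^{\norm{U}_\infty(1/\epsilon_k - 1)}$. The base case is a fixed-temperature ultracontractivity bound. The paper's own Lemmas~\ref{l:psi-Linfty-weighted-sum}, \ref{l:weylfunction-ub} and~\ref{l:eigenfunctions-Linfty-ub} supply it for every $\hat T_0 > 0$, and monotonicity in $T$ then covers all $T \geq \hat T_0$.

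The paper gives no proof of this lemma and simply defers to Lemma~4.11 of~\cite{HanIyerEA26}. In~\cite{HanIyerEA26} the particles are resampled between levels, so an argument there must also control the law of a resampled particle. In Algorithm~\ref{a:aisAutoNormalized} the positions are never resampled: $X^1_k$ is a time-inhomogeneous Markov chain driven by $P_{1,T},\dots,P_{k,T}$. Your change-of-reference-measure telescoping is therefore a complete, self-contained proof in the present setting. It also explains why the bound has the exponent $(1/\epsilon - 1)\norm{U}_\infty$, which is exactly what telescoping from $\epsilon_1 = 1$ produces.

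A small simplification: since $U \geq 0$ and $\epsilon_k \leq \epsilon_1 = 1$, you have $Z_k \leq Z_1$ directly. The extra factor $e^{\norm{U}_\infty}$ you absorb into $C_q$ is therefore not needed.
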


\begin{lemma}[Lemma 4.13, \cite{HanIyerEA26}]\label{l:first-level}
There exists a constant $\hat C_1=\hat C_1(U)$ such that for any $\delta>0$ and
\begin{equation}\label{e:NT-choice-first-level}
N\geq \frac{\hat C_N}{\delta^2} \,,\quad T\geq \hat C_1\paren*{\log\paren*{\frac{1}{\delta}} + 1 + \log\paren*{N}}\,,
\end{equation}
we have
\begin{equation}\label{e:first-level-small-error}
\Err_{2,0}(\psi_{2,2})\leq \delta\,.
\end{equation}
Here, $\hat C_N$ is the same constant defined in Lemma~\ref{l:errk10psi-iter}.
\end{lemma}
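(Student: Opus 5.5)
The plan is to reduce the first-level error to the case of independent, equally weighted particles at temperature $\epsilon_1 = 1$. In that case a plain bias--variance decomposition works, with the bias controlled by the spectral gap $\lambda_{2,1}$ as in the proof of Lemma~\ref{l:uMixLangevin}.

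\textbf{Step 1: reduction via Lemma~\ref{l:k+10-to-kT}.} Apply Lemma~\ref{l:k+10-to-kT} with $k = 1$ and $h = \psi_{2,2}$ to get
\begin{equation*}
  \Err_{2,0}(\psi_{2,2}) \leq \norm{\psi_{2,2}}_{L^\infty}\, \Err_{1,T}(r_1) + \Err_{1,T}(r_1 \psi_{2,2})\,.
\end{equation*}
By Property~\ref{property:eigenfunctions}, $\norm{\psi_{2,2}}_{L^\infty} \leq C_{\psi_2}$.

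\textbf{Step 2: uniform bound on $r_1$.} Since the $1/\epsilon_k$ are linearly spaced with $K = \ceil{1/(\epsilon\nu)}$, we have
\begin{equation*}
  1 \leq \frac{1}{\epsilon_2} = 1 + \frac{1/\epsilon - 1}{K} \leq 1 + \nu\,.
\end{equation*}
Hence
\begin{equation*}
  r_1 = \frac{Z_1}{Z_2}\, e^{-U(1/\epsilon_2 - 1)} \leq \frac{Z_1}{Z_2}\,.
\end{equation*}
Moreover $Z_2 \geq \int_{\T^d} e^{-(1+\nu)U}\,dx$, which is a constant depending only on $U$ and $\nu$. So $\norm{r_1}_\infty$ and $\norm{r_1\psi_{2,2}}_\infty$ are bounded by a constant $C(U,\nu)$, uniformly in $\epsilon$.

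\textbf{Step 3: the level-one error.} At level one all weights equal $w_0^i = 1/N$. The points $X_1^i$ are independent, with $X_1^i \sim P_1^T(X_0^i,\cdot)$, where the starting points $X_0^i$ are fixed (or independent). For a bounded $g$ write
\begin{equation*}
  \frac{1}{N}\sum_{i=1}^N g(X_1^i) - \ip{g,\pi_1}
  = \frac{1}{N}\sum_{i=1}^N \paren[\big]{g(X_1^i) - P_1^T g(X_0^i)}
  + \frac{1}{N}\sum_{i=1}^N \paren[\big]{P_1^T g(X_0^i) - \ip{g,\pi_1}}\,.
\end{equation*}
The first sum has mean-zero independent summands, so its $L^2(\P)$ norm is at most $\norm{g}_\infty/\sqrt N$. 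For the second sum, the estimate from the proof of Lemma~\ref{l:uMixLangevin} (which uses Lemma~\ref{l:psi-Linfty-weighted-sum} with $m=2$) gives, for $T \geq (d+2)/\lambda_{2,1}$,
\begin{equation*}
  \norm{P_1^T g - \ip{g,\pi_1}}_\infty \leq C_{\epsilon_1} e^{-\lambda_{2,1}T}\norm{g}_{L^1(\pi_1)}\,.
\end{equation*}
Hence $\Err_{1,T}(g) \leq \norm{g}_\infty\paren{N^{-1/2} + C_{\epsilon_1}e^{-\lambda_{2,1}T}}$.

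\textbf{Step 4: choice of constants.} Applying Step 3 to $g = r_1$ and $g = r_1\psi_{2,2}$ and combining with Steps 1--2 gives
\begin{equation*}
  \Err_{2,0}(\psi_{2,2}) \leq C(U,\nu)\paren{N^{-1/2} + e^{-\lambda_{2,1}T}}\,.
\end{equation*}
Choose $\hat C_N \geq 4C(U,\nu)^2$, so that $N \geq \hat C_N/\delta^2$ makes the first term at most $\delta/2$. Choose $\hat C_1 \geq \max\set{d+2,\ \log(2C(U,\nu)) + 1}/\lambda_{2,1}$, so that $T \geq \hat C_1(\log(1/\delta) + 1)$ makes the second term at most $\delta/2$. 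The $\log N$ term in~\eqref{e:NT-choice-first-level} is then simply not needed.

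One point needs care: the constants here depend on $\nu$ through the bound on $\norm{r_1}_\infty$. This is harmless, because $\hat C_N = \hat C_N(U,\nu)$ is allowed to depend on $\nu$, and we can take the maximum with the constant required by Lemma~\ref{l:errk10psi-iter}. The other thing to check is the independence used in Step 3: it holds because there is no resampling or normalization before level one, so the $N$ particles are independent at that stage.
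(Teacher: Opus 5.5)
Your proof is correct. The paper gives no argument for this lemma (it imports it from \cite{HanIyerEA26}), so there is nothing in the text to compare against step by step, and your route is a clean, self-contained one. Before the first reweighting the particles carry equal weights and are conditionally independent given their starting points. So an exact bias--variance split at $\epsilon_1=1$ suffices, with the bias controlled pointwise in $X_0^i$ by the $L^1\to L^\infty$ spectral bound from the proof of Lemma~\ref{l:uMixLangevin}. You therefore never need the level-$k$ tools: neither the $\sqrt N e^{-\Lambda T}$ term of Lemma~\ref{l:errkth-iter} nor the density bound of Lemma~\ref{l:q/pi-rinv-ub}. Those tools would not apply at level one anyway when the $X_0^i$ are deterministic.

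One correction to your closing remarks. Your $\hat C_1$ depends on $\nu$ through $C(U,\nu)$, and this dependence is real: $\norm{r_1}_\infty = Z_1/Z_2$, and $1/\epsilon_2$ can be close to $1+\nu$. The statement, however, asserts $\hat C_1=\hat C_1(U)$, and the $\log N$ term you call unnecessary is exactly what delivers this.

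You already take $\hat C_N\ge 4C(U,\nu)^2$, so the condition $N\ge \hat C_N/\delta^2$ gives $\log N\ge 2\log(2C(U,\nu)/\delta)$. Hence, for $\delta\le 1$ (the range that matters) and assuming, as you may, $C(U,\nu)\ge 1$, you get $\log(1/\delta)+1+\log N\ge \log(2C(U,\nu)/\delta)+1$. The $\nu$-free choice $\hat C_1=(d+2)/\lambda_{2,1}$ then already forces $\lambda_{2,1}T\ge\max\{d+2,\log(2C(U,\nu)/\delta)\}$, which is all your Step~4 needs.

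Enlarging the common constant $\hat C_N$ is harmless for Lemma~\ref{l:errk10psi-iter}, since it only strengthens that lemma's hypothesis on $N$. With this adjustment your argument proves the lemma exactly as stated.
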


\begin{proof}[Proof of Theorem~\ref{t:langevinGen-anais}]
Let $C_\beta$ be as in Lemma~\ref{l:errk10psi-iter}, and define
\begin{equation}
    C_r = 1 \vee \paren*{\max_{1\leq k\leq K} \norm{r_k}_\infty}\,.
\end{equation}
Fix $\delta > 0$ and set $\tilde{\delta}\defeq \frac{\delta}{8C_\beta C_r}$. Fix $\alpha > 0$ and suppose that
\begin{align}
\label{e:Nlarge-final}
N &\geq \frac{\max\set*{\hat C_N,\, C_1\bar C_w^{2} C_\beta^{-2} }}{\tilde \delta^2}K^2\,,\\
\label{e:Tlarge-final}
T &\geq \max \Biggl\{
  \begin{aligned}[t]
    &C_\alpha \Bigl( K^{(1+\alpha)\hat{\gamma}_r} + \frac{1}{\epsilon} + \log \frac{1}{\tilde{\delta}} + \log N \Bigr)
    \,,
    \\
      &\hat C_1 \Bigl( \log \frac{1}{\tilde{\delta}} + 1 + \log N \Bigr)\,, 
       C_T\paren*{\frac{1}{\epsilon} + \log K}\,,
    \\
    &\frac{1}{2\Lambda}\frac{\|U\|_\infty}{\epsilon}
      + \frac{1}{\Lambda} \log \biggl( \frac{C_q^{1/2}}{\tilde{\delta}} \sqrt{N} \biggr)\,,
      ~1
    \Biggr\},
  \end{aligned}
\end{align}
where~$\bar C_w, C_T$ are as in Lemma~\ref{l:prodBoundLangevin}, $C_1$ is as in Lemma~\ref{l:prodBoundLangevin-anais}, $C_\alpha$ and $\hat C_N$ are as in Lemma~\ref{l:errk10psi-iter}, $C_q = C_q(U,1)$ is as in Lemma~\ref{l:q/pi-rinv-ub}, and~$\hat C_1$ is as in Lemma~\ref{l:first-level}. 
In particular, choosing $\hat C_T$ and $C_N$ in~\eqref{e:T-choice-emp-anais} and~\eqref{e:N-choice-emp-anais} sufficiently large ensures that the lower bounds in~\eqref{e:Nlarge-final} and~\eqref{e:Tlarge-final} are satisfied. 
Moreover, with this choice of~$N$ and~$T$, $N$ is sufficiently large to satisfy~\eqref{e:NT-choice1} and~\eqref{e:NT-choice-first-level} with~$\delta=\tilde \delta$ and $T$ is sufficiently large to satisfy $T \geq \max\set{T_0,1}$, \eqref{e:NT-choice1}, and~\eqref{e:NT-choice-first-level} with~$\delta=\tilde \delta$. 
Hence, Proposition~\ref{p:essANLangevin} holds, Lemmas~\ref{l:errk10psi-iter} and~\ref{l:first-level} hold with~$\delta=\tilde \delta$, and Lemma~\ref{l:q/pi-rinv-ub} holds with~$\hat T_0 = 1$.

Using Lemmas~\ref{l:k+10-to-kT}, \ref{e:errkth-iter}, and Proposition~\ref{p:essANLangevin}, we obtain that for any bounded test function~$h$,
\begin{align}
  \MoveEqLeft
    \Err_{K+1,0}(h)
  \\
  \label{e:errMTh}
    &\leq \Biggl( \Err_{K,0}(\psi_{2,K}) + \frac{C_1^\frac{1}{2}\bar C_w}{\sqrt{N}} + \sqrt{N}
	\Bigl\| \frac{q_{K-1}}{\pi_K} \Bigr\|_\infty^{1/2} e^{-\Lambda T} \Biggr)
\norm{r_K}_\infty \norm{h}_\infty \,.
\end{align}
We estimate the terms on the right hand side separately.
Applying~\eqref{e:Err-recursion} for every $2 \leq k \leq K-1$, together with~\eqref{e:first-level-small-error}, yields
\begin{align}\label{e:errM0psi2M}
\Err_{K,0}(\psi_{2,K})
&\leq \Biggl( \prod_{j=2}^{K-1} \beta_j \Biggr) \Err_{2,0}(\psi_{2,2})
+ \sum_{k=2}^{K-2} c_k \Biggl( \prod_{j=k+1}^{K-1} \beta_j \Biggr)
+ c_{K-1} \\
&\leq C_\beta \tilde{\delta}
+ \sum_{k=2}^{K-2} C_\beta \frac{\tilde{\delta}}{K}
+ \frac{\tilde{\delta}}{K}
\leq 2C_\beta \tilde{\delta}
= \frac{\delta}{4C_r}\,.
\end{align}

Moreover,
\begin{equation}\label{e:Mxi-over-sqrt-N}
\frac{C_1^\frac{1}{2} \bar C_w }{\sqrt{N}} 
\overset{\eqref{e:Nlarge-final}}{\leq} C_\beta \tilde \delta 
= \frac{\delta}{8C_r}\,.
\end{equation}

Finally, using Lemma~\ref{l:q/pi-rinv-ub}, we obtain
\begin{equation}\label{e:eigen-larger-than-3}
\sqrt{N} \Bigl\| \frac{q_{K-1}}{\pi_{K}} \Bigr\|_\infty^{1/2} e^{-\Lambda T}
\leq \sqrt{N} C_q^{1/2}
\exp \Bigl( \frac{\|U\|_\infty}{2\epsilon} - \Lambda T \Bigr)
\overset{\eqref{e:Tlarge-final}}{\leq}
\tilde{\delta}
\leq \frac{\delta}{8C_r}\,.
\end{equation}
Combining~\eqref{e:errMTh}, \eqref{e:errM0psi2M}, \eqref{e:Mxi-over-sqrt-N}, and~\eqref{e:eigen-larger-than-3}, we obtain
\begin{equation}
    \Err_{K+1,0}(h) \leq \frac{\delta}{C_r}\norm{h}_\infty \norm{r_K}_\infty \leq \delta \norm{h}_\infty\,,
\end{equation}
which completes the proof of~\eqref{e:an-ais-error-bound}.
Equation~\eqref{e:empHs-anais} directly follows from~\eqref{e:an-ais-error-bound} as in the proof of~\eqref{e:empHs} in Theorem~\ref{t:ais}.
\end{proof}

\bibliographystyle{halpha-abbrv} 
\bibliography{gautam-refs1,gautam-refs2,preprints}

\end{document}